\def\pq{{p/q}}
 \newcommand*{\bfnf}{{\mbox{\boldmath$\Gamma$}}}
  \renewcommand*{\H}{\mathbb{H}}
  \newcommand*{\inter}{\ensuremath{\operatorname{int}}}
  \newcommand*{\opq}{\omega_{p/q}}
  \newcommand*{\ls}{\lambda,\sigma}
  \newcommand{\per}{\operatorname{Per}_1}
  \newcommand*{\R}{\mathbb{R}}
  \newcommand*{\Z}{\mathbb{Z}}
  \newcommand*{\C}{\mathbb{C}}
  \newcommand*{\N}{\mathbb{N}}
  \newcommand*{\Q}{\mathbb{Q}}
  \newcommand*{\T}{\mathbb{T}}
  \newcommand*{\D}{\mathbb{D}}
  \newcommand*{\m}[1]{\mathbb{#1}}
  \newcommand*{\CC}{\widehat\C}
\newtheorem{definition}{Definition}
\newtheorem{etheorem}{Theorem}
\newtheorem*{denjoy*}{Denjoy-Wolff Theorem}
\newtheorem*{bers*}{Bers Inequality}
\newtheorem{cor}{Corollary}
\newtheorem{lemma}{Lemma}
\newtheorem{prop}{Proposition}
\newtheorem*{prop*}{Proposition}
\def\wht{\widehat\tau}
\def\wtga{\widetilde\gamma}
\def\wtt{\widetilde\tau}
  \newcommand{\Log}{{\operatorname{Log}}}
  \newcommand{\e}{{\operatorname{e}}}
\title{Limits of Quadratic Rational Maps:\\
 The Cantor Locus}
\author{Eva Uhre \\
  \small{\texttt{euhre@ruc.dk}} \\
  \small{Department of Science and Environment, Roskilde University}}
\date{\today}
\begin{document}

\maketitle

\begin{abstract}
The \emph{Cantor locus} is the unique hyperbolic component, in the moduli space of quadratic rational maps ${\bf rat}_2$, consisting of maps with totally disconnected Julia sets. Whereas the geometry and dynamics of the Cantor locus is well understood, its boundary and the dynamics of the maps on the boundary are not.
In this paper, we explore the dynamics near the parabolic parts of the boundary. 

We introduce the concept \emph{dynamical marking} of a map $g$, 
relative to the quadratic, parabolic polynomial $\mathrm{P}_{\opq}(z)={\opq} z+z^2$, with $\opq=e^{2\pi ip/q}$. A dynamical marking $(x,\psi)$ of $g$ is a conjugacy $\psi$ between $\mathrm{P}_{\opq}$ (on its parabolic basin of 0) and $g$, which \emph{marks} the dynamical position of the critical values $v_1=\psi(-\frac{\lambda^2}{4})$, $v_2=\psi(x)$ of $g$.

We construct a local parametrization of the Cantor locus, which parametrizes by dynamical marking, and use it to prove a form of \emph{stability} of dynamical marking. That is, for sequences in the Cantor locus, of fixed dynamical marking $x$ and such that the eigenvalue $\lambda_k$ of the attracting fixed point tends to $\opq$ \emph{subhorocyclicly}, either the sequence converges to the unique parabolic parameter in the boundary, which has a fixed point eigenvalue $\opq$ and which is marked by $x$ relative to $\mathrm{P}_{\opq}$. Or, the sequence tends to infinity in ${\bf rat}_2$, and certain representatives $G_{\lambda_k,a_k}$ have \emph{rescaled} limits in the boundary of the Cantor locus within ${\bf rat}_2$.
\end{abstract}
\noindent {\em \small 2020 Mathematics Subject Classification:
  Primary: 37F10, 37F46.}

\setcounter{tocdepth}{1}
\tableofcontents

\section{Introduction}
The moduli space ${\bf rat}_2$ of all quadratic rational maps modulo M\"{o}bius conjugacy is isomorphic to $\C^2$, with the two first elementary symmetric functions of the eigenvalues at the fixed points as affine coordinates, as described by Milnor in \cite{mil93}. Moreover, the loci
\[
\per(\lambda )=\{[f]\in{\bf rat}_2:\text{some fixed point
of }f\text{ has eigenvalue }\lambda \} 
\]
are lines in the affine structure given by this coordinate system. A conjugacy class $[f]\in{\bf rat}_2$ is uniquely determined by the three fixed point eigenvalues $\lambda, \mu, \nu$ of $f$. For any ordered pair $(\lambda,\sigma)\in\C^2$, there is a unique conjugacy class ${\bfnf}_{\lambda,\sigma}$ consisting of maps such that some fixed point
has eigenvalue $\lambda$ and such that  the product of the eigenvalues of the
two other fixed points is $\sigma$. 
For fixed $\lambda \in \C$ the map
\[\C\ni \sigma\,\mapsto \,{\bfnf}_{\lambda,\sigma}\in \per(\lambda)\]
is an isomorphism, in fact it is an affine parametrization of the line $\per(\lambda)$ \cite[Remark 6.9]{mil93}. 

A rational map is \emph{hyperbolic} if and only if the orbit of every critical point converges to some attracting periodic orbit. The subset of moduli space consisting of hyperbolic maps is open and its connected components are called \emph{hyperbolic components}. The classification and description of hyperbolic components of ${\bf rat}_2$ was pioneered by Rees \cite{rees90} and Milnor \cite{mil93}, these groundbreaking studies also supplied an overview of the landscape of hyperbolic components. This landscape has been further described by others; compactness questions have been studied, for example, by Epstein \cite{eps00}.

The moduli space ${\bf poly}_2$ (canonically isomorphic to $\per(0)$) of all quadratic polynomials modulo affine conjugacy is isomorphic to $\C$, and can be parametrized by the family $\mathrm{Q}_{c}(z)=z^{2}+c$, $c\in \C$, in which case $\sigma=4c$. By contrast, there is no natural global parametrization of ${\bf rat}_2$. However, the two-parameter family
  \begin{equation}
	G_{\lambda,a}(z)=\frac{1}{\lambda}\left(z+a+\frac{1}{z}\right),
\end{equation}
provides a convenient parametrization of ${\bf rat}_2\setminus \per(0)$. That is, for fixed $\lambda\neq 0$ the family $(G_{\lambda,a})_{a\in \C}$ is a 2:1 branched parametrization of $\per(\lambda)$, with branch point $a=0$ corresponding to the unique class $[f]\in \per(\lambda)$ in which $f$ has a non-trivial automorphism (i.e. $[f]$ belongs to the symmetry locus). The relation between $(\lambda,a)$ and $(\lambda,\sigma)$ is given by $a^2=(\lambda-2)^2-\sigma\lambda^2$.

 Let $J(f)$ denote the Julia set of the rational map $f$ and let $M$ denote the well-known Mandelbrot \nolinebreak set 
\[
M=\{c\in \C:J(\mathrm{Q}_{c})\text{ is connected}\}.
\]
The parameter space of the family $(\mathrm{Q}_{c})_{c\in\C}$ contains infinitely many bounded hyperbolic components, all in the interior of $M$, which are naturally parametrized by the eigenvalue of the unique finite attracting cycle, and one unbounded hyperbolic component, $\C\setminus M$. 

For a parameter $c\in \C\setminus M$, 
the critical value $c$ of $\mathrm{Q}_{c}$ is in the basin of the super-attracting fixed point at infinity. The B\"ottcher coordinate $\Phi_c$ (which is asymptotic to the identity at infinity) is a biholomorphic conjugacy between $Q_c$ and $z^2$ in a neighborhood of $\infty$ and the domain of $\Phi_c$ can be extended naturally to contain $c$. Thus, $\Phi_c(c)\in \C\setminus \overline\D$ and the map $c\mapsto \Phi_c(c)$ provides a natural isomorphism between $\C\setminus M$ and $\C\setminus \overline\D$, as pioneered by Douady and Hubbard \cite{douhub}.
This is the most fundamental example of what we here call a \emph{dynamical marking}: $x=\Phi_c(c)$ is a dynamical marking of $Q_c$ relative to $z^2$ (or of the class in $\per(0)$ corresponding to $\sigma=4c$), and the map $x\mapsto c$ is a parametrization of $\C\setminus M$ by dynamical marking.

For a parameter $c\in \C\setminus M$, the Julia set is totally disconnected. This dichotomy holds in general for quadratic rational maps: the Julia set of a quadratic rational map is either connected or totally disconnected, and it is totally disconnected if and only if the eigenvalue of one of the fixed points is in $\D\cup\{1\}$ and the two critical points are in the same Fatou component \cite[Lemma 8.2]{mil93}, see also \cite{rees90}. 
Moreover, a quadratic rational map with totally disconnected Julia set is conjugate on the Julia set to the one-sided shift on two symbols, and the Julia set is homeomorphic to the standard Cantor set. See \cite{gold90}, \cite{mil93}, \cite{rees90}.

The hyperbolic quadratic rational maps with totally disconnected Julia sets form one hyperbolic component in ${\bf rat}_2$. 
This component has various names, such as \emph{type I}, the \emph{hyperbolic escape locus} or the \emph{shift locus}. Here we will call it the \emph{(hyperbolic) Cantor locus}.
This component is well-understood: it is unbounded and homeomorphic to $\D\times (\C\setminus \bar\D)$, with the central slice ($\lambda=0$) corresponding to $\C\setminus M$. See \cite{gold90}, \cite{mil93}.

In the Cantor locus it is convenient to work with the coordinates $(\lambda,\sigma)$, where $\lambda$ is the eigenvalue of the unique attracting fixed point. The lines $\per(\lambda )$, $\lambda\in \D$, intersect within ${\bf rat}_2$,  but they do not intersect in the Cantor locus, since maps here have only one attracting fixed point. 

For each $\lambda\in\D\cup \{1\}$ let 
\[
M^{\lambda }=\left\{ \sigma \in \C: \mbox{ any map in } {\bfnf}_{\lambda,\sigma} \mbox{ has a connected Julia set}\right\}. 
\]
For $\lambda\in\m{S}^{1}$ we will use the notation $\omega _{\theta}=e^{2\pi i\theta }$ for any value of $\theta $, with the convention that $\omega _{p/q}=e^{2\pi i p/q}$ always denotes a
primitive $q$-th root of unity.
For $\lambda\in\D\cup\bigcup_{p/q\in \Q/\Z}\{\opq\}$ we consider also the following subset of $\C$:

\begin{itemize}
\item $\mathcal{R}^{\lambda}$ consisting of all $\sigma\in\C$ such that both critical orbits of any $f\in{\bfnf}_{\lambda,\sigma}$ converge to the fixed point with eigenvalue $\lambda$.
\end{itemize}

And for $\omega=\opq$ the subset
\begin{itemize}
\item $\mathcal{D}^{\omega}$ consisting of all $\sigma\in\C$ such that any $f\in{\bfnf}_{\omega,\sigma}$ 
has a \emph{degenerate} fixed point of eigenvalue $\omega$, i.e. the fixed point has multiplicity $2q+1$ as a fixed point of $f^q$.
\end{itemize}

We will call $\mathcal{R}^{\lambda}$ the \emph{relatedness locus} for $\per(\lambda)$.
For $\lambda\in\D$, $\mathcal{R}^{\lambda}$ is the open set
$\C\setminus M^\lambda$ and for $\lambda=1$, $\mathcal{R}^{1}$ is the complement of $M^1$, together with the set of parameters $\sigma\in\C$ so that every $f\in{\bfnf}_{1,\sigma}$ has a critical value which is prefixed, under some iterate of $f$, to the parabolic fixed point of eigenvalue 1. Thus, $\mathcal{R}^{1}$ is neither open nor closed.

In $\C^2$ consider the following set, the \emph{hyperbolic relatedness locus} $\mathcal{R}$
\begin{itemize}
\item  $\mathcal{R}$ consisting of all $(\lambda,\sigma)\in\C^2$ such that both critical orbits of any $f\in{\bfnf}_{\lambda,\sigma}$ converge to a common attracting fixed point with eigenvalue $\lambda$. Note that $\mathcal{R}=\left \{(\lambda,\sigma): \lambda\in\D \mbox{ and } \sigma\in\mathcal{R}^{\lambda}\right\}=\bigcup_{\lambda\in\D} \{\lambda\} \times \mathcal{R}^{\lambda}=\bigcup_{\lambda\in\D} \{\lambda\} \times \C\setminus M^\lambda$.
\end{itemize}
Since the lines $\per(\lambda)$ do not intersect in the Cantor locus, the set $\mathcal{R}$ is isomorphic to the Cantor locus, and we will use the names synonymously.

The objective of this paper is to explore the relation between the relatedness loci $\mathcal{R}^{\opq}$, pertaining to the parabolic lines $\per(\opq)$, and the boundary of the Cantor locus $\mathcal{R}$. In order to do this, we introduce the concept \emph{dynamical marking} relative to the polynomial
\[\mathrm{P}_{\opq}(z)=\opq z+z^2.\]

\paragraph{Dynamical markings}
We will introduce the notion of \emph{dynamical marking} of a quadratic rational map $g$; a homeomorphic (and to some extent holomorphic) conjugacy between a fixed quadratic polynomial $\mathrm{P}_{\lambda}(z)=\lambda z+z^2$, for some 
$\lambda\in\D^*\cup\bigcup_{p/q\in \Q/\Z}\{\opq\}$, and the given map $g$. This conjugacy is defined on a subset of the attracting or parabolic basin of 0 for the polynomial $\mathrm{P}_{\lambda}$, marked with a point $x$, and such that $x$ and the critical value $-\lambda^2/4$ of $\mathrm{P}_{\lambda}$ are mapped to the critical values of $g$. Our main application will be dynamical marking relative to $\mathrm{P}_{\opq}$. A precise definition will be given in Section \ref{sectdynmarkrelP}.

\subsection{Short summary of results}
Suppose a $p/q$ is fixed, with $(p,q)=1$ and $q\geq 2$.
The concept of dynamical marking gives rise to a partial parametrization of $\mathcal{R}$ by the parabolic basin of 0 for $\mathrm{P}_{\opq}$, see Definition \ref{defParameter}. This parabolic basin constitutes a pre-model for $\mathcal{R}^{\opq}$, a model which records dynamical markings relative to $\mathrm{P}_{\opq}$, thus relating $\mathcal{R}$ to $\mathcal{R}^{\opq}$. From this pre-model, a faithful bijective model can be constructed, see \cite{u10}.

We characterize the limiting behavior of sequences $(\lambda_k,\sigma_k)\in \mathcal{R}$, where $\lambda_k\to\opq$ subhorocyclicly and where $\sigma_k$ has a fixed dynamical marking $x$ relative to $\mathrm{P}_{\opq}$. We use the notion \emph{subhorocyclic} convergence introduced by Epstein \cite{eu}, and say that a sequence $\lambda_{k}\in \D$ converges to $\omega\in\mathbb{S}^{1}$ $\emph{subhorocyclicly}$  if $\Im (\overline{\omega}\lambda _{k}) = o\left( \sqrt{1-|\lambda _{k}|^{2}}\right) $. Geometrically, this condition states that $\lambda_k$ is eventually contained in any $\D$-horodisc at $\omega$. 
We show that every "admissible" $x$ in the parabolic basin of 0 for $\mathrm{P}_{\opq}$ (for the precise definition see Equation (\ref{domainadmissiblex}) and the discussion in Section \ref{results}) is either a dynamical marking of a unique $\sigma \in \mathcal{R}^{\opq}\setminus \mathcal{D}^{\opq}$, which is the limit of a sequence $(\lambda_k,\sigma_k)\in \mathcal{R}$, where $\sigma_k$ has dynamical marking $x$ relative to $\mathrm{P}_{\opq}$ (for details see Theorem \ref{thm:convergence}), or $x$ corresponds to an unbounded sequence $(\lambda_k,\sigma_k)\in \mathcal{R}\subset\C^2$, where $\sigma_k$ converges to $\infty$. In the last case, choosing the normal form $G_{\lambda_k,a_k}(z)=\frac{1}{\lambda_k}(z+a_k+\frac{1}{z})$ representing ${\bfnf}_{\lambda_k,\sigma_k}$, the sequence of $q$th iterates $G_{\lambda_k,a_k}^q$ converges locally uniformly on $\C^*$ to a quadratic rational map $G_T:=G_{1,T}\in {\bfnf}_{1,\sigma}$, with $1-T^2=\sigma \in \mathcal{R}^{1}$, of dynamical marking relative to $\mathrm{P}_{1}$ determined by $x$ (for details, see Theorem \ref{thm:convergenceunbound}). This is sometimes referred to as \emph{rescaling}.  

In the first case, it follows from Theorem \ref{thm:convergence} and results from \cite{u10} (see Proposition \ref{prop:parabolicmarkings}), that every $\sigma \in \mathcal{R}^{\opq}\setminus \mathcal{D}^{\opq}$ is accessible from the Cantor locus along an analytic curve of the same dynamical marking as $\sigma$ (see Corollary \ref{cor:shootinglemma}).

\section{Statement of results}\label{results}
We will now give a more detailed introduction, leading to the formulation of the statements.

\paragraph{The polynomial $\mathrm{P}_{\opq}$.} Consider the polynomial $\mathrm{P}_{\opq}:\CC\to\CC$, given by:
\[\mathrm{P}_{\opq}(z)={\opq} z+z^2.\] 
For the discussion in this paper we consider a fixed $p/q$, with $(p,q)=1$ and $q\geq 2$, and in order to ease the notation we will everywhere suppress the dependence on $p/q$, so that $\omega:=\opq$. In particular, $\mathrm{P}$ will be used in place of $\mathrm{P}_{\opq}$.

$\mathrm{P}$ has a parabolic fixed point at 0 with eigenvalue $\omega$ and a critical point at $-\omega/2$, with critical value $-\omega^2/4$. Let $A$ denote the parabolic basin of 0. The parabolic basin $A$ has a $q$-cycle of components, whose closures meet only at the parabolic fixed point 0. These $q$ components perform a $p/q$ rotation around the fixed point 0, under iteration by $\mathrm{P}$. They will be indexed counterclockwise with respect to their cyclic ordering $B^0,..., B^{q-1}$, where $B^0$ is the component that contains the critical point $-\omega/2$. See Figure \ref{Fatrabbit}.

 Let $\phi:A\to\C$ denote an extended Fatou coordinate, which is a surjective and holomorphic map that satisfies the Abel functional equation $\phi\circ\mathrm{P}^q=1+\phi$. It is patched together between components $B^j$ to satisfy the functional equation:
 \begin{equation}  
 	\phi\circ\mathrm{P}=1/q+\phi.
 \end{equation}
 The Fatou coordinate will be further normalized by setting $\phi(-\omega/2)=0$.

Each $B^j$ contains a unique petal $\mathcal{P}_\Delta^j\subseteq B^j$ for $\mathrm{P}^q$ at 0, which is mapped univalently by $\phi$  onto some right half-plane $\H_\Delta=\{z\in\C \: | \: \Re(z) >\Delta\}$ and which is maximal in this sense, whence it has a critical point of $\mathrm{P}^q$ on its boundary. Such a petal is called a maximal attracting petal. In general, denote by $\mathcal{P}_\delta^j\subseteq B^j$ an attracting petal for $\mathrm{P}^q$ at 0, whose (univalent) image under $\phi$ is precisely the right half-plane $\H_\delta=\{z\in\C \: | \: \Re(z) >\delta\}$. Note that by our choice of normalization $\mathcal{P}_\Delta^0=\mathcal{P}_0^0$. We will say that a petal $\mathcal{P}_\delta^j$ is of level $\delta$ and call the union $\mathcal{P}_\delta= \bigcup_{j\in \Z_q} \mathcal{P}_\delta^j$ an \emph{attracting flower} for $\mathrm{P}$ at 0 of level $\delta$. Let $\widehat{\mathcal{P}}_{\delta}=\mathcal{P}_{\delta}\cup \{0\}$.

The set $\overline A\setminus \{0\}$ has $q$ connected components. Let $S^j$ denote the component so that $B^j\subset S^j$, whence $S^0$ is the component containing the critical point $-\omega/2$ and $S^p$ the component containing the critical value $-\omega^2/4$.

\begin{figure}
\begin{center}
\begin{tabular}{cc}
    \includegraphics[width=5.5cm]{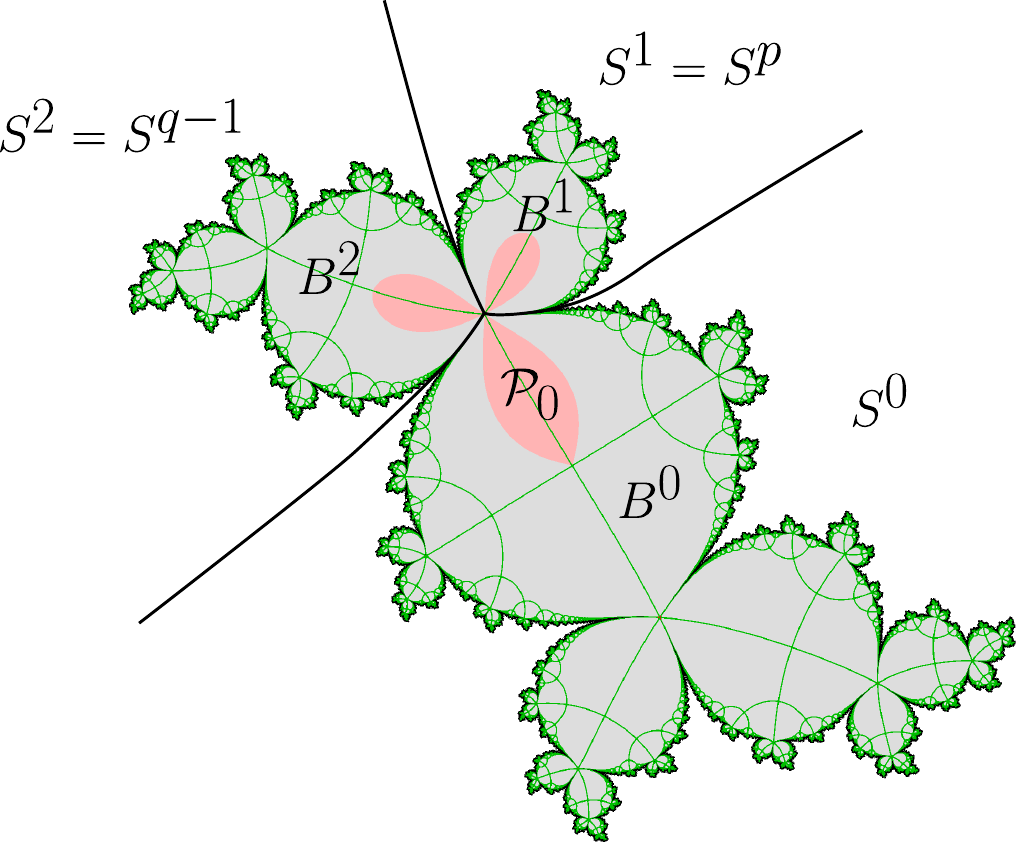}
   \includegraphics[width=5cm]{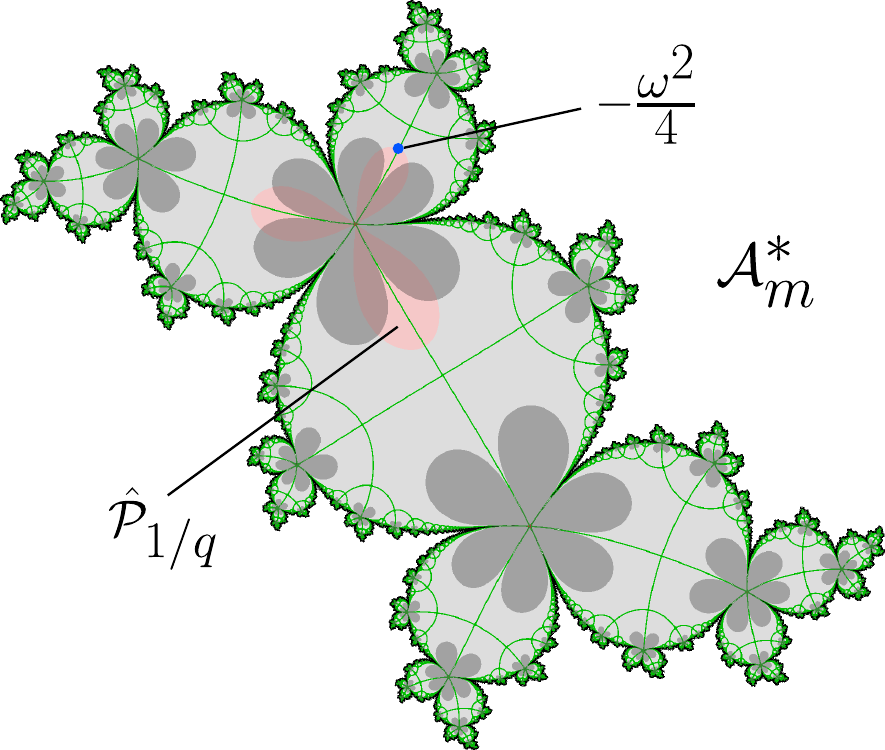} 
\end{tabular}
\caption{The left picture shows $A$, the attracting basin of 0, for $\mathrm{P}=\mathrm{P}_{1/3}$. The attracting flower $\mathcal{P}_{0}$ is shown in pink. 
The figure also illustrates components $B^j$ and sectors $S^j$. The right picture shows $\mathcal{A}_m^*$, with the removed sepals shown in dark grey, and the removed (extended) flower $\widehat{\mathcal{P}}_{1/q}$ shown in pink. Dynamical pictures are made by Arnaud Ch{\'e}ritat.}
\label{Fatrabbit}
\end{center}

\end{figure}

Let
\begin{equation}
	\mathcal{A}=A \cup \bigcup_{n\geq 0}\mathrm{P}^{-n}(0)\subset \overline A, 
\end{equation}
and for $m>0$ let
\begin{equation}\label{def:Xim}
 \mathcal{A}_{m}=\mathcal{A}(m)=\left\{z\in A:   |\Im(\phi(z))|<m/2\right\} \cup \bigcup_{n\geq 0}\mathrm{P}^{-n}(0)\subset \mathcal{A}.  
\end{equation}
We will also allow $m=\infty$, in which case $\mathcal{A}_{m}=\mathcal{A}$. 
The set $\mathcal{A}$ is the parabolic basin, augmented with 0 and all its pre-images, and $\mathcal{A}_{m}$ has all so-called \emph{sepals} of height $m/2$, and all their pre-images, removed.

Recall that $\mathcal{P}_{1/q}$ denotes the attracting flower for $\mathrm{P}$ at 0 of level $1/q$, and let  
\begin{equation}\label{domainadmissiblex}
	\mathcal{A}^*=\mathcal{A}\setminus (\widehat{\mathcal{P}}_{1/q}\cup\{-\omega^2/4\}) \quad \text{and} 
\end{equation}
\begin{equation}
 \mathcal{A}^*_{m}=\mathcal{A}_m\setminus (\widehat{\mathcal{P}}_{1/q}\cup\{-\omega^2/4\}). 
\end{equation}
See Figure \ref{Fatrabbit}.
We are now ready to define dynamical marking relative to $\mathrm{P}$.

\subsection{Dynamical marking relative to $\mathrm{P}$}\label{sectdynmarkrelP}
For $x\in \mathcal{A}^*$, let $m>0$ be such that $x\in \mathcal{A}_m^*$ and define 
\[\mathcal{U}=\mathcal{U}_m(x):=\mathrm{P}^{-n}(\widehat{\mathcal{P}}_{0})\cap \mathcal{A}_{m},\]for $n=n(x)\geq 0$ minimal so that $\mathrm{P}^{n}(x)\in \widehat{\mathcal{P}}_{0}$. 

Note that $\mathcal{U}_m(x)$ is forward invariant and connected, but neither open nor closed, and $0, \frac{-\omega^2}{4}, x \in \mathcal{U}_m(x)$, while $\mathrm{P}^{-1}(x) \cap \mathcal{U}_m(x)=\emptyset$. To alleviate the notation, we use $\mathcal{U}=\mathcal{U}_m(x)$ if both $x$ and $m$ are clear from the context. 

We define slit planes $\C_{-\omega}=\C\setminus \{-\omega \cdot r \; | \; r\geq 0\}$ and disks $\D_{-\omega}=\D\setminus [0,-\omega)$. For $\lambda\in \D_{-\omega}$ let $L=q\Log(\lambda\bar\omega)=q\log\lambda-p2\pi i$, where $\Log$ is the principal logarithm, whence $\log$ is the branch of the logarithm from the cut plane $\C_{-\omega}$ to the strip $\{z\in\C \, |  \; 2\pi p/q - \pi < \Im(z) < 2\pi p/q + \pi\}$.
For $\lambda\in \D_{-\omega}\cup \{\omega\}$ let

\begin{equation}\label{eqn_mlambdaattrpar}
    m(\lambda)=\begin{cases}
\frac{2\pi\sin\theta}{q|L|} & \text{ for } \lambda\neq \omega\\
\infty  & \text{ for } \lambda= \omega,
\end{cases}
\end{equation}
where $\theta$ is the angle between $L$ and $2\pi i$. For $\lambda\neq \omega$ the number $r_\lambda=\frac{\pi}{q^2m(\lambda)}$ is the radius of the circle through $\log\lambda$, and tangent to the imaginary axis at $2\pi ip/q$, see also Section \ref{sec:star}.
\begin{definition}[Dynamical marking relative to $\mathrm{P}$]\label{def:dynmarkingpara}
Let $\lambda\in \D_{-\omega}\cup \{\omega\}$ and $m=m(\lambda)$, let $\sigma \in \mathcal{R}^\lambda$ and let $g\in \Gamma_{\lambda,\sigma}$ with a fixed point at $z_0$ of eigenvalue $\lambda$. A \emph{dynamical marking} of $g$ \emph{relative to} $\mathrm{P}$ (written rel $\mathrm{P}$) is a pair $(x,\psi)$ where $x\in \mathcal{A}^*_{m(\lambda)}$ and $\psi: \mathcal{U}\to V$ is a 
homeomorphism obeying 
\begin{equation}
   \psi\circ \mathrm{P}=g\circ \psi, 
\end{equation} 
which is holomorphic on $\inter(\mathcal{U})$, where $\psi(-\frac{\omega^2}{4})$ and $\psi(x)$ are the critical values of $g$, and where $\mathcal{U}=\mathcal{U}_{m(\lambda)}(x)$ as defined above.
\end{definition}
\begin{figure}[htbp]
  \begin{center}
    \includegraphics[width=11cm]{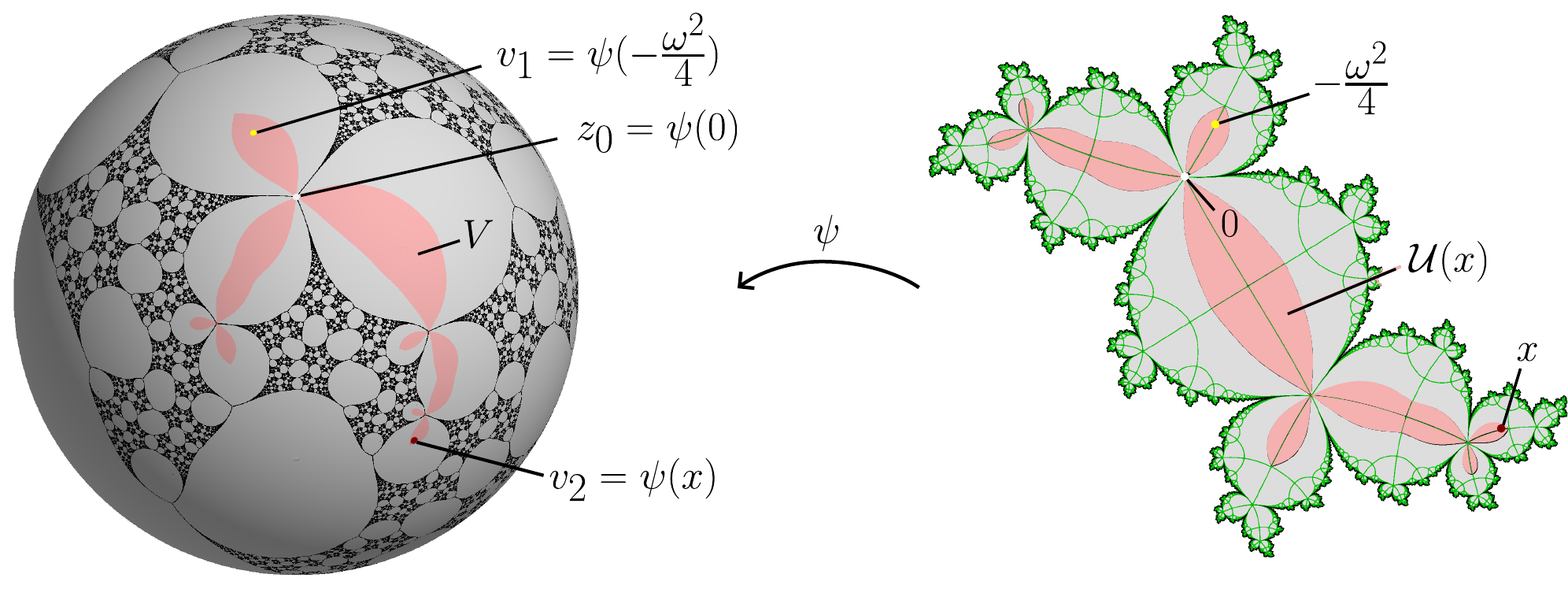}
  \end{center}
  \caption{\label{fig:illustrationdynmarking} Illustrates dynamical marking rel $\mathrm{P}$ for $p/q=1/3$. The left shows the dynamical plane of a $g\in \Gamma_{\omega_{1/3},\sigma}$ (on the Riemann sphere), and the filled-in Julia set of $\mathrm{P}$ is on the right. The figure shows the domain $\mathcal{U}(x)$ and range $V$ of $\psi$ in pink, the marking $x\in\mathcal{A}^*$ of $g$ and the two critical values $v_1$ and $v_2$ of $g$. Dynamical pictures are made by Arnaud Ch{\'e}ritat.}
\end{figure}

When $\mathrm{P}$ is understood from the context, we will omit "rel $\mathrm{P}$". We will say that $g$ is \emph{dynamically marked} by $(x,\psi)$ if $(x,\psi)$ is a dynamical marking of $g$. We will say that $g$ is dynamically marked by $x$, if there exists a \emph{marker map} $\psi$, so that $g$ is dynamically marked by $(x,\psi)$. A dynamical marking induces a labeling  $v_1=\psi(-\frac{\omega^2}{4})$ and $v_2=\psi(x)$ of critical values of $g$. We will refer to this labeling as a dynamical labeling of $g$.

For $\lambda\in \D_{-\omega}\cup \{\omega\}$ and $\sigma \in \mathcal{R}^\lambda$, if $g\in \Gamma_{\lambda,\sigma}$ is dynamically marked by $(x,\psi)$ rel $\mathrm{P}$, then $x$ is a conformal invariant of $\Gamma_{\lambda,\sigma}$. Moreover, we will show that $\sigma$ is the unique parameter in $\mathcal{R}^\lambda$, which is dynamically marked by $x$, and that the dynamical marker map $\psi$ is unique up to automorphisms of $g$, see Proposition \ref{prop:markinglambdarelP} (and its proof in Section \ref{markingRbyP}) and Proposition \ref{thm:injective1}
in Section \ref{sectdynmarkrelPonRomega}. In view of these properties, we will speak of $x$ as a dynamical marking of the equivalence class $\Gamma_{\lambda,\sigma}$, or of its parameter $\sigma\in \mathcal{R}^\lambda$.

\subsection{Parametrization of $\mathcal{R}$ by dynamical marking rel $\mathrm{P}$}
Consider an $M>1/q^2$ and let 
\[D_M=\{\lambda\in\D_{-\omega}: m(\lambda)> M\}.\]
$D_M$ is the $\D^*$ horodisk $\exp(\D(-r+2\pi ip/q,r))$ at $\omega$, where $r=\frac{\pi}{q^2M}<\pi$, see Figure \ref{DMrho}. 
\begin{figure}
\begin{center}
   \includegraphics[width=11cm]{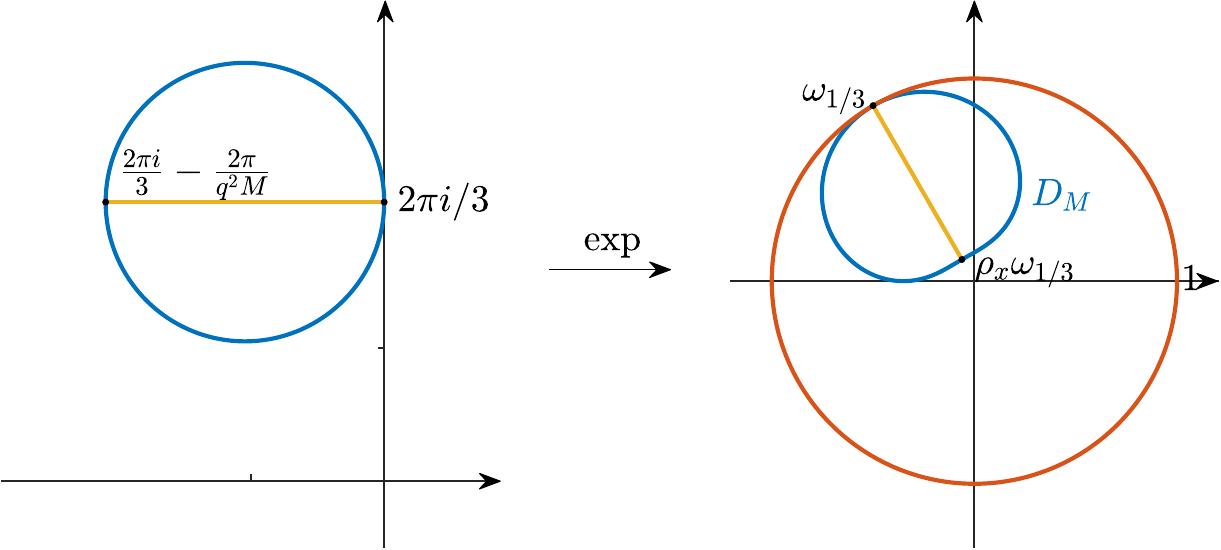} 
\caption{The figure illustrates $\D(-r+2\pi ip/q,r)$, $D_M$ and $\rho_x$ for $p/q=1/3$ and $M=1/3$.}
\label{DMrho}
\end{center}
\end{figure}

 Suppose $\lambda\in \D_{-\omega}$ is given. If $x$ is a dynamical marking of a $\sigma\in \mathcal{R}^{\lambda}$, then $x$ is necessarily in $\mathcal{A}_{m(\lambda)}^*$, thus $\mathcal{A}_{m(\lambda)}^*$ is the set of admissible $x$-values for $\lambda$. On the other hand, suppose $x \in \mathcal{A}^*$ is fixed and let $M >\max\{1/q^2, 2|\Im(\phi(x))|\}$, where it is understood (whenever it appears) that the maximum is equal to $\frac{1}{q^2}$ when $x$ is not in the domain of $\phi$. Then $x \in \mathcal{A}_{M}^*$ 
 and the corresponding horodisk $D_M$ consists of admissible values of $\lambda$.
 Fixing an $M$ determines a domain $D_M\times \mathcal{A}_{M}^*$ of admissible pairs $(\lambda,x)$ for dynamical markings in $\mathcal{R}$, since $(\lambda,x)\in D_M\times \mathcal{A}_{M}^* \Rightarrow x\in \mathcal{A}^*_M\subset \mathcal{A}_{m(\lambda)}^*$.

Not every $\sigma \in \mathcal{R}^{\lambda}$ is dynamically marked relative to $\mathrm{P}$ (see Section \ref{markingRbyP}), 
but every $x \in \mathcal{A}_{M}^*$ is the dynamical marking of exactly one $\sigma\in \mathcal{R}^{\lambda}$, for each $\lambda\in D_M$. 

\begin{prop}\label{prop:markinglambdarelP}
	Suppose $M>1/q^2$. For any 
    $(\lambda,x)\in D_M\times \mathcal{A}_{M}^*$, there exists a unique parameter $\sigma\in \mathcal{R}^{\lambda}$ such that $\Gamma_{\lambda,\sigma}$ is dynamically marked by $x$ rel $\mathrm{P}$. 
    In particular, this defines a partial parametrization by dynamical marking, $\Phi^\lambda: \mathcal{A}^*_{M} \to \mathcal{R}^{\lambda}$. The map $\Phi^\lambda$ is holomorphic in the interior of its domain, continuous when restricted to $\mathrm{P}^{-n}(\widehat{\mathcal{P}}_{0})\cap \mathcal{A}^*_{M}$ for every $n$, but not in the entire domain $\mathcal{A}^*_{M}$, and depends analytically on the parameter $\lambda\in D_M$.
    
     Moreover, if
	\begin{itemize}
		\item $x\notin \partial\mathcal{P}_{1/q}$ then $x$ is the unique marking of $\Gamma_{\lambda,\sigma}$.
		\item if $x\in \partial\mathcal{P}_{1/q}^{p+j}$ then $x'\in \partial\mathcal{P}_{1/q}^{p-j}$ with $\Im(\phi(x'))=-\Im(\phi(x))$, both $p+j$ and $p-j$ taken modulo $q$, is the only other marking of $\Gamma_{\lambda,\sigma}$.
	\end{itemize}
\end{prop}
The proposition will be proved in Section \ref{markingRbyP}. We will use this result to define a partial parameter $\mathbf{\Phi}$ on $\mathcal{R}$, which parametrizes by dynamical marking relative to $\mathrm{P}$.

\begin{definition}[Definition of a partial parameter $\mathbf{\Phi}$ on $\mathcal{R}$] \label{defParameter}
For every $\pq$, with $(p,q)=1$ and $q\geq 2$, and for $M>\frac{1}{q^2}$ we define  a map
\[\mathbf{\Phi}:D_M\times \mathcal{A}_{M}^*\to (D_M\times \C)\cap \mathcal{R} \quad \text{of the form}\]
\[\mathbf{\Phi}(\lambda,x)=(\lambda, \Phi(\lambda,x))=(\lambda, \Phi^\lambda(x))=(\lambda, \Phi_x(\lambda)),\]
by letting $\Phi^\lambda: \mathcal{A}^*_{M} \to \mathcal{R}^{\lambda}$ be the parametrization from Proposition \ref{prop:markinglambdarelP}.
That is, $\Phi^\lambda(x)=\sigma$, where $\sigma$ is the unique parameter in $\mathcal{R}^{\lambda}$ so that $\Gamma_{\lambda,\sigma}$ is dynamically marked by $x$ rel  $\mathrm{P}$. 
\end{definition}
From Proposition \ref{prop:markinglambdarelP} it follows that $\Phi_x$ is analytic in $D_M$ for every $x\in \mathcal{A}^*_{M}$, we say that $\mathbf{\Phi}$ is \emph{horizontally analytic}. For a fixed $x\in \mathcal{A}^*_{M}$, and for a curve $\delta\subset D_M$ or a sequence $(\lambda_k)_k\subset D_M$, we will call $\Phi_x(\delta)$ an $x$-horizontal curve and $(\Phi_x(\lambda_k))_k$ an $x$-horizontal sequence. Every parameter in such an $x$-horizontal curve or sequence has dynamical marking $x$. For any $x \in \mathcal{A}^*$ if we let $M>\max\{1/q^2, 2|\Im(\phi(x))|\}$ then $x \in \mathcal{A}_{M}^*$. 
Thus, every $x \in \mathcal{A}^*$ and every sequence $(\lambda_k)_k\subset D_M$ give rise to sequences $(\lambda_k,\sigma_k)=(\lambda_k,\Phi_{x}(\lambda_k))\in D_M\times\C$, where each $\sigma_k\in \mathcal{R}^{\lambda_k}$ and $(\sigma_k)_k$ is an $x$-horizontal sequence. 

\subsection*{Main result 1: Convergence to the parabolic locus $\mathcal{R}^{\omega}$}
The following Theorem is a main result of this paper. It is concerned with the limiting behavior of such $x$-horizontal sequences $(\lambda_k,\sigma_k)$ in $\mathcal{R}\subset\C^2$ as $\lambda_k\to\omega$ subhorocyclicly.

\begin{etheorem}\label{thm:convergence}
For $x \in \mathcal{A}^*\setminus S^p$, let $M>\max\{1/q^2, 2|\Im(\phi(x))|\}$ and let $(\lambda_k)_k\subset D_M$ be a sequence converging to $\omega=\opq$ subhorocyclicly. Then the $x$-horizontal sequence $(\sigma_k)_k=(\Phi_x(\lambda_k))_k$ 
converges to the unique $\sigma\in\mathcal{R}^\omega\setminus\mathcal{D}^\omega$, which is dynamically marked by $x$ rel $\mathrm{P}$. In particular, every $x \in \mathcal{A}^*\setminus S^p$ is a dynamical marking for exactly one $\sigma\in\mathcal{R}^{\omega}\setminus\mathcal{D}^\omega$.\end{etheorem}

In the other direction, dynamical marker maps for parameters $\sigma\in\mathcal{R}^\omega\setminus \mathcal{D}^\omega$ can be constructed using Fatou coordinates.
This is done in \cite{u10} (although it is not called dynamical markings). 
\cite[Lemma 3.3]{u10} shows that no $\sigma \in \mathcal{R}^\omega\setminus \mathcal{D}^\omega$ is dynamically marked, rel $\mathrm{P}$, by an $x\in S^p$. For completeness, the next proposition formulates the results from \cite{u10} in the context of dynamical markings, and we recall the construction of dynamical marker maps for $g\in\Gamma_{\omega,\sigma}$ with $\sigma\in\mathcal{R}^\omega\setminus \mathcal{D}^\omega$ in its proof in Section \ref{sectdynmarkrelPonRomega}.

\begin{prop}\label{prop:parabolicmarkings}
	Any $\sigma\in \mathcal{R}^\omega\setminus \mathcal{D}^\omega$ is dynamically marked rel $\mathrm{P}$ by an $x\in \mathcal{A}^*\setminus S^p$.    
         If $x\notin \partial\mathcal{P}_{1/q}$ then $x$ is the unique marking of $\sigma$. On the other hand if $x\in \partial\mathcal{P}_{1/q}^{p+j}$ then $x'\in \partial\mathcal{P}_{1/q}^{p-j}$ with $\Im(\phi(x'))=-\Im(\phi(x))$, both $p+j$ and $p-j$ taken modulo $q$, is the only other marking of $\sigma$.
\end{prop}
Proposition \ref{prop:parabolicmarkings} and Theorem \ref{thm:convergence} together show that dynamical marking rel $\mathrm{P}$ gives rise to a surjective and almost injective parameter from $\mathcal{A}^*\setminus S^p$ onto $\mathcal{R}^\omega\setminus \mathcal{D}^\omega$, and they imply the following corollary. 

For $x\in \mathcal{A}^*\setminus S^p$ let $M=\max\{1/q^2, 2|\Im(\phi(x))|\}$ and let 
\[\rho_x=e^\frac{-2\pi}{q^2 M},\]
it is the "diameter" of the $\D^*$ horodisk $D_M$, see Figure \ref{DMrho}.

\begin{cor}\label{cor:shootinglemma}
Every $\sigma \in \mathcal{R}^\omega\setminus \mathcal{D}^\omega$ is accessible from the Cantor locus $\mathcal{R}$. Let $x$ be a dynamical marking rel $\mathrm{P}$ of $\sigma$. Then the $x$-horizontal analytic curve $\delta_x=(\Phi_x: \omega]\rho_x,1[\to\C)$ \emph{lands} at $\sigma$, that is $\delta_x$ extends continuously to 1 with $\delta_x(1)=\sigma$. 
\end{cor}

\subsection{Main result 2: Divergence and rescaling}\label{mainresult2rescaling}
As described above, no $\sigma \in \mathcal{R}^\omega\setminus \mathcal{D}^\omega$ is dynamically marked rel $\mathrm{P}$ by an $x\in S^p$, so what happens for sequences $(\lambda_k,\sigma_k=\Phi_x(\lambda_k))_k$, for $x\in S^p$, as $\lambda_k$ tends to $\opq$? That is the topic of the next theorem, but first we need a little more background.

The definition of dynamical marking relative to $\mathrm{P}_{\omega}$ also makes sense for the case $\omega=1$. We let $A_1$ denote the parabolic basin of 0 for $\mathrm{P}_1$, and let $\phi_1$ denote the extended Fatou coordinate, normalized so that $\phi_1(-1/4)=1$. Petals $\mathcal{P}_\delta$ are defined in the same way as for $q\geq 2$, and we let   
\begin{equation}
 \widetilde{A}_1=A_1 \cup \bigcup_{n\geq 0}\mathrm{P}_1^{-n}(0)  
\end{equation}
and
\begin{equation}
 A_1^*=\widetilde{A}_1\setminus (\mathcal{P}_{1}\cup\{0,-1/4\}). 
\end{equation}
Dynamical markings $x\in A_1^*$, the corresponding marker maps $\psi$, and their domains are defined similarly to the case $q\geq 2$.

Recall that the set $\overline{A}\setminus \{0\}$ (for $\mathrm{P}$) has $q$ connected components and that $S^0$ is the component containing the critical point $-\omega/2$, see again Figure \ref{Fatrabbit}.
\begin{figure}
\begin{center}
   \includegraphics[width=11cm]{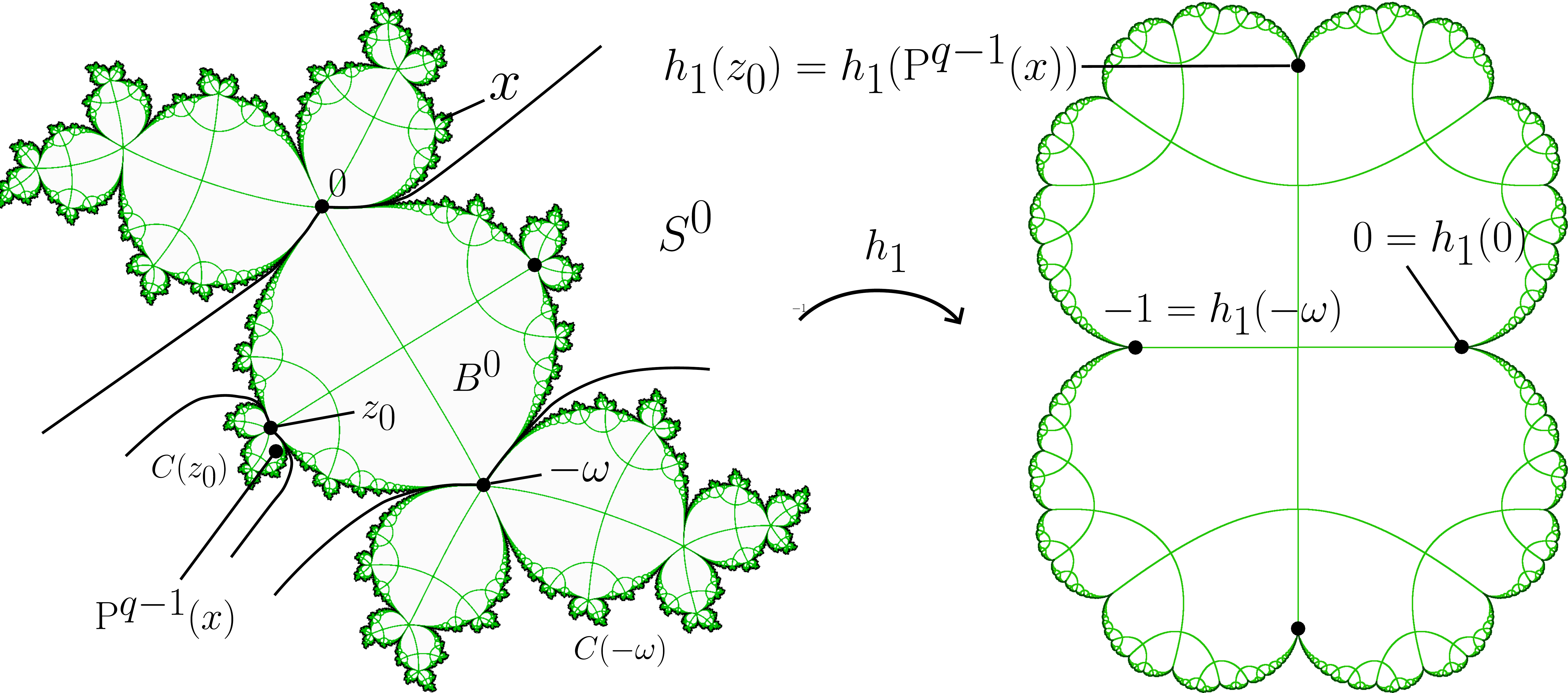} 
\caption{The figure illustrates the map $h_1$ (for $p/q=1/3$), with the augmented basin $\mathcal{A}$ of $\mathrm{P}$ on the left and the augmented basin $\widetilde{A}_1$ of $\mathrm{P}_1$ on the right (to the level $n=2$ for $\widetilde{A}_1$). $h_1$ maps the sets $C(z_0)$ to $h_1(z_0)$. The figure also illustrates $h_1(\mathrm{P}^{q-1}(x))$, cf. Theorem \ref{thm:convergenceunbound}. Dynamical pictures by Arnaud Ch{\'e}ritat.}
\label{fig_h1}
\end{center}
\end{figure}
\paragraph{Connecting $\mathrm{P}^q$ on $\overline{S^0}\cap \mathcal{A}$ to $\mathrm{P}_{1}$ on $\widetilde{A}_1$}
We introduce a surjective, continuous map $h_1: \overline{S^0}\cap \mathcal{A} \to \widetilde{A}_1$, with $h_1(0)=0$ and $h_1(-\omega/2)=-1/2$, such that 
\begin{equation}\label{functeqnh1}
h_1\circ\mathrm{P}^q = \mathrm{P}_{1}\circ h_1 \quad \text{on } \overline{B^0}\cap \mathcal{A}.
\end{equation}
On $B^0$ it is the biholomorphic conjugacy $h_1:B^0\to \widetilde{A}_1$ of $\mathrm{P}^q$ to $\mathrm{P}_1$, constructed from Fatou coordinates for $\mathrm{P}^q$ and $\mathrm{P}_{1}$, such that $h_1(-\omega/2)=-1/2$. It is extended to the fixed point 0 and all its pre-images by continuity, so that, in particular, $h_1(0)=0$. This defines $h_1$ as a homeomorphic conjugacy from $\overline{B^0}\cap \mathcal{A}$ onto $\widetilde{A}_1$. The next step is to  extend $h_1$ to the full domain. Let $z_0$ denote an iterated pre-image of 0 in $\overline{B^0}$, every such pre-image of 0 separates $S^0$ into $q$ connected components, let $C(z_0)$ denote the union of components not containing $-\omega/2$, and define $h_1(z)=h_1(z_0)$ for all $z\in C(z_0)$, see Figure \ref{fig_h1}. Thus, informally speaking, $h_1$ "collapses" everything attached to $B^0$ at $z_0$ to the image of $z_0$. With this extension $h_1$ is continuous and surjective, but of course no longer a bijection, and it does not obey the functional equation (\ref{functeqnh1}) everywhere. 
\\ \\

For $\omega=1$ we use the normal form $G_T(z):=G_{1,T}(z)=z+T+\frac{1}{z}$. The next Theorem is the analogue of Theorem \ref{thm:convergence} for $x\in S^p$.
\begin{etheorem}\label{thm:convergenceunbound}
For every $x \in \mathcal{A}^*\cap S^p$ let $M>\max\{1/q^2, 2|\Im(\phi(x))|\}$ and let $(\lambda_k)_k\subset D_M$ be a sequence converging to $\omega=\opq$ subhorocyclicly. Then the $x$-horizontal sequence $\sigma_k=\Phi_x(\lambda_k)$ tends to $\infty\in \CC$. 

Moreover, for representatives $G_{\lambda_k, a_k} \in {\bfnf}_{\lambda_k,\sigma_k}$ with dynamical labeling $\psi_k\left(-\frac{\lambda_k^2}{4}\right)=G_{\lambda_k, a_k}(1)$ and $\psi_k(x)=G_{\lambda_k, a_k}(-1)$ rel $\mathrm{P}$, it holds that

\begin{itemize}
\item $G_{\lambda_k, a_k}\to \infty$ locally uniformly on $\C^*$
\item $G_{\lambda_k, a_k}^q\to G_T$ locally uniformly on $\C^*$, where $T\in \C$, $\sigma=1-T^2 \in\mathcal{R}^{1}$ and $\sigma$ is dynamically marked by $h_1(\mathrm{P}^{q-1}(x))$ rel $\mathrm{P}_1$. 

In particular, if $x\in B^p$, then both critical points $\pm1$ of $G_T$ are in the same component of the parabolic basin of $\infty$ and $\sigma\in \inter(\mathcal{R}^{1}) =\C\setminus M^1$. And if $x\in S^p\setminus B^p$ then there exists $n\geq 1$ so that $G^n_T(-1)=0$ and $\sigma\in\mathcal{R}^{1}\cap M^1$.
 \end{itemize}
\end{etheorem}

There are three main tools for the proofs. The first is the behavior of sequences that diverge to infinity in ${\bf rat}_2$, which is treated in \cite{eps00}. We give a quick distillation of the results needed for our use in Section \ref{sect:divergenceinfty} and apply them to show new results concerning the Cantor locus in Section \ref{sectMainProofs}. The largely algebraic conditions in \cite{eps00} are refined to our case in Lemma \ref{lemma:dynamicsinfinity} (in Section \ref{sectMainProofs}), using the geometric control coming from the dynamical markings. 

The second is a construction called \emph{stars} in attracting basins, introduced in \cite{pet99}, and treated in Section \ref{sec:star}. We define the construction in Section \ref{sect_stardef}, and use it to introduce dynamical markings rel $\mathrm{P}_\lambda$ (for $\lambda\in\D_{-\omega}$) in Section \ref{sec:dynmarkinglambda}. 
The third tool is a model for $\mathcal{R}^{\lambda}$, which is modified from a model introduced in \cite{gold90}. Our formulation of the model is based on stars and dynamical markings, it is given in Section \ref{chap:gkmodel}. 

In Sections \ref{sect:landingwire} and \ref{sect:modulus} we treat stars in non-simply connected quadratic basins; we show some new results and extend a main result from \cite{pet99} to this case. In Section \ref{sectdynmarkrelPproofs} we prove Proposition \ref{prop:markinglambdarelP} and Proposition \ref{prop:parabolicmarkings}, and show uniqueness of dynamical markings rel $\mathrm{P}$. The paper ends with the proofs of Theorems \ref{thm:convergence} and \ref{thm:convergenceunbound} in Section \ref{sectMainProofs}.

\section{Divergence to infinity in ${\bf rat}_2$}\label{sect:divergenceinfty}

\subsection{Reminder on normal forms} 
For polynomials with an attracting or parabolic fixed point we use 
\[\mathrm{P}_{\lambda}(z)=\lambda z+z^2, \quad \lambda\in\D^*\cup \bigcup_{p/q}\{\opq\}.\] 
$\mathrm{P}_{\lambda}$ has an attracting or parabolic fixed point at 0 with eigenvalue $\lambda$ and a critical point at $\frac{-\lambda}{2}$, with critical value $\frac{-\lambda^2}{4}$.

Every quadratic rational map has three fixed points, counting multiplicity, and two critical points with distinct critical values. Recall that each class ${\bfnf}_{\lambda,\sigma}$ in $\per(\lambda)$ has a representative of the form 
\begin{equation}
G_{\lambda,a}(z)=\frac{1}{\lambda}\left(z+a+\frac{1}{z}\right),
\end{equation}
as long as $\lambda\neq 0$. Maps $G_{\lambda,a}$ have critical points $\pm1$ and a fixed point at $\infty$ with eigenvalue $\lambda$. Note that $[G_{\lambda,a}]=[G_{\lambda,-a}]= {\bfnf}_{\lambda,\sigma}$ where 
\begin{equation}
\sigma=\frac{(\lambda-2)^2-a^2}{\lambda^2}.
\end{equation}
For the case $\lambda=1$ we use the notation:
\begin{equation}
G_T(z):=G_{1,T}(z)=z+T+\frac{1}{z}, \quad G_T \in {\bfnf}_{1,\sigma},
\end{equation}
where $\sigma=1-T^2$. Maps $G_{T}$ have critical points $\pm1$ and a double fixed point at $\infty$ with eigenvalue 1.

\subsection{Dynamics near infinity in ${\bf rat}_2$}
The moduli space ${\bf rat}_2$ contains infinitely many unbounded and infinitely many bounded components. Every hyperbolic component of maps with an attracting fixed point is unbounded. Epstein shows \cite{eps00} that every hyperbolic component of maps with two distinct attracting cycles has compact closure in ${\bf rat}_2$ if and only if neither attractor is a fixed point. One of the tools for this is a detailed understanding of those unbounded sequences in ${\bf rat}_2$, which have a fixed point eigenvalue tending to a $q$th root of unity. \cite{eps00} studies sequences of maps $G_{\lambda_k,a_k}(z)=\frac{1}{\lambda_k}(z+a_k+\frac{1}{z})$, so that  $\lambda_k$ tends to a $q$th root of unity and $[G_{\lambda_k,a_k}]$ diverges to infinity in ${\bf rat}_2$. In particular \cite{eps00} gives algebraic conditions under which the sequence of $q$th iterates of such a sequence converges locally uniformly on $\C^*$ to a quadratic rational map. We refer to this as \emph{rescaling}.

We summarize here some results from \cite{eps00} and \cite{mil93} concerning the dynamical behavior of sequences that diverge to infinity in ${\bf rat}_2$. For the formulation of Lemma \ref{BHClemma}(\cite{eps00}) given here, see also \cite{kabelka} and \cite{eu}. 

A sequence is bounded in ${\bf rat}_2$ if and only if the corresponding sequences of the eigenvalues of the fixed points $\lambda_k, \mu_k, \nu_k$ are bounded in $\C$. The well-known holomorphic index formula, which for $\lambda, \mu \neq 1$ can be written as  
\[\nu=\frac{2-\lambda-\mu}{1-\lambda\mu},\]
shows that if $\lambda_k, \mu_k$ tend to $\lambda, \frac{1}{\lambda}$, for some $\lambda\in \CC\setminus \{1\}$, then $\sigma_k=\mu_k\nu_k\to\infty$.

For the other direction, any unbounded sequence in ${\bf rat}_2$ has a subsequence, such that the fixed point eigenvalues tend to $\lambda, \frac{1}{\lambda}, \infty$, where $\lambda\in \CC$. Here we consider sequences ${\bfnf}_{\lambda_k,\sigma_k}$ where $\lambda_k\to\opq$.
\begin{lemma}[\cite{eps00}]\label{BHClemma}
Let $f_k$ be a sequence of quadratic rational maps, with fixed point eigenvalues $\lambda_k, \mu_k, \nu_k$ tending to $\opq,\bar\omega_{\pq},\infty$, $q\geq 2$. If $f_k= G_{\lambda_k, a_k}$ then
\begin{equation}\nonumber
f_k^i \to \infty \quad \text{ locally uniformly on } \C^* \text{ for } 1\leq i<q. 
\end{equation}
If further $\frac{1-\lambda_k^q}{1-\lambda_k}\cdot a_k\to T\in\CC$, then 
\begin{equation}\nonumber
f_k^q \to G_T   \quad \text{ locally uniformly on } \C^*, 
\end{equation}
where we use the convention $G_\infty=\infty$. Moreover, if $T\in \C$, $\lambda_k\neq \opq$ and $\mu_k\neq \bar\omega_{\pq}$ then there are $q$-cycles $\langle z \rangle_k$ with eigenvalues $\rho_k\to 1-T^2$, while for any other $q$-cycles $\langle \widehat z \rangle_k\neq\langle z \rangle_k$ the eigenvalues tend to infinity.

\end{lemma}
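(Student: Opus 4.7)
The plan is to analyze $f_k=G_{\lambda_k,A_k}$ by first observing that the coefficient $A_k$ diverges, then exploiting a natural rescaling under which $f_k$ has a M\"obius limit of exact order $q$, and finally identifying the limit of $f_k^q$ via its critical and fixed-point structure.

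Applying the holomorphic index formula, since $\lambda_k\mu_k\to\opq\bar\omega_{p/q}=1$ while the numerator $2-\lambda_k-\mu_k\to 2-2\cos(2\pi p/q)\neq 0$ for $q\geq 2$, the identity $\nu=(2-\lambda-\mu)/(1-\lambda\mu)$ forces $\nu_k\to\infty$, hence $\sigma_k=\mu_k\nu_k\to\infty$ and $A_k^2=(\lambda_k-2)^2-\sigma_k\lambda_k^2\to\infty$. Writing $f_k(z)=z/\lambda_k+A_k/\lambda_k+1/(\lambda_k z)$, the divergence of the middle term alone yields $f_k\to\infty$ locally uniformly on $\C^*$.

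For iterates $1\leq l<q$ I would introduce the rescaling $u=A_k/z$, under which a direct computation gives
\[
\hat f_k(u):=\frac{A_k}{f_k(A_k/u)}=\frac{\lambda_k u}{1+u+u^2/A_k^2}\;\longrightarrow\;\hat f(u):=\frac{\opq u}{1+u}
\]
locally uniformly on $\CC\setminus\{-1\}$. The matrix representative of the M\"obius map $\hat f$ has eigenvalues $\opq$ and $1$, so $\hat f^q=\mathrm{id}$ and $\hat f$ has exact order $q$ in $\mathrm{PGL}_2$; the forward orbit of $\infty$ therefore has period exactly $q$. For $z$ in a compact $K\subset\C^*$, the first rescaled iterate $u_1=A_k\lambda_k/(z+A_k+1/z)$ converges uniformly to $\opq\in\C\setminus\{0\}$, and iterating yields $u_l\to\hat f^{l-1}(\opq)\in\C\setminus\{0\}$ for all $1\leq l<q$; hence $f_k^l(z)=A_k/u_l\to\infty$ uniformly on $K$. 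For $f_k^q$, a Montel argument (the target $\CC$ being compact) produces, after extracting a subsequence, a limit $h:\C^*\to\CC$; if non-constant it extends to a quadratic rational map, because the critical points $\pm 1$ of $f_k$ persist as critical points of $h$, the remaining critical points of $f_k^q$ lie on $q$-preimages of $\pm 1$ and escape to $\{0,\infty\}$ by the same rescaling, and a quadratic rational map with critical points $\pm 1$ must have the form $a(z+1/z)+b$; the condition $h'(\infty)=\lim\lambda_k^q=1$ then forces $a=1$, so $h=G_T$ with $T=b$, while $h\equiv\infty$ corresponds to $T=\infty$.

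Finally, assume $T\in\C$, $\lambda_k\neq\opq$, $\mu_k\neq\bar\omega_{p/q}$. The fixed point $z^*=-1/T$ of $G_T$ is simple with multiplier $1-T^2\neq 1$, so the implicit function theorem produces a unique simple fixed point of $f_k^q$ near $z^*$; since the finite fixed points of $f_k$, given by the roots of $(1-\lambda_k)z^2+A_kz+1=0$, accumulate at $\{0,\infty\}$, this point is not fixed by $f_k$ and therefore belongs to a genuine $q$-cycle $\langle z\rangle_k$ with eigenvalue $\rho_k\to 1-T^2$. The remaining $(2^q-q-2)/q$ $q$-cycles must all escape in $\CC$, since any accumulation point of cycle points would be a fixed point of $G_T$ distinct from $z^*$ and $\infty$, and $G_T$ has none. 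The main obstacle is then to upgrade \emph{cycle escapes} to \emph{cycle multiplier tends to $\infty$} individually for each remaining cycle; I would combine rescaled-picture derivative estimates along the escaping cycle with the holomorphic index formula applied to $f_k^q$, which constrains the sum of residues $1/(1-\rho)$ and precludes bounded multipliers on escaping cycles.
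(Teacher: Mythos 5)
The paper does not prove this lemma; it quotes it from Epstein~\cite{eps00}, so there is no internal proof to compare against. Your mechanism is the right one: the rescaling $u=A_k/z$ turns $f_k$ into $\hat f_k(u)=\lambda_k u/(1+u+u^2/A_k^2)\to \opq u/(1+u)$, a M\"obius map of exact order $q$ fixing $0$, and since $\hat f^l(\infty)\in\C^*$ for $1\le l<q$ (the orbit of $\infty$ has exact period $q$ and avoids the fixed point~$0$), you correctly get $f_k^l=A_k/u_l\to\infty$ locally uniformly on $\C^*$.

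The identification of the subsequential limit $h$ of $f_k^q$ as some $G_T$ is, however, not established. Montel delivers a limit holomorphic on $\C^*$ only; it says nothing about the isolated singularities at $0$ and $\infty$, so a priori $h$ could have essential singularities there, or extend to a rational map of degree greater than $2$. Showing that the critical points of $h$ in $\C^*$ are confined to $\{\pm1\}$ (via Hurwitz) does not bound $\deg h$, and the claim that the remaining critical points of $f_k^q$ escape to $\{0,\infty\}$ ``by the same rescaling'' hides genuine work: $f_k^{-1}$ of a sequence tending to $\infty$ need not accumulate on $\{0,\infty\}$ without rate control (e.g.\ preimages of $w_k\sim A_k/\lambda_k+c$ stay bounded in $\C^*$), so one must track inductively, in rescaled coordinates, which preimage branches track the finite backward orbit of $\infty$ under $\hat f$ and which escape. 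One ingredient you did not exploit and which simplifies the degree count once the extension to $\CC$ is obtained is the symmetry $f_k(1/z)=f_k(z)$, hence $h(1/z)=h(z)$. In the ``Moreover'' part there are two further gaps: your implicit-function-theorem argument assumes $1-T^2\neq 1$, i.e.\ $T\neq 0$, but $T=0$ does occur (the paper invokes precisely this case in the proof of Theorem~1), and when $T=0$ the relevant fixed point of $G_T$ is the multiple fixed point at $\infty$, so your argument—and the localization of the $q$-cycle away from the fixed points of $f_k$ near $\{0,\infty\}$—does not apply. And you explicitly leave unproved the assertion that all other $q$-cycle multipliers tend to $\infty$, which is part of the statement being proved.
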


\section{Quadratic attracting basins}\label{sec:star}
In this section we study quadratic rational maps belonging to the Cantor locus, that is, maps $g\in {\bfnf}_{\lambda,\sigma}$ with $\lambda\in\D^*$ and $\sigma \in \mathcal{R}^{\lambda}$. As a tool for this, we introduce the concept of \emph{stars} for attracting basins, in Section \ref{sect_stardef}. The stars are used both for the definition of the parametrization $\mathbf{\Phi}$ from definition \ref{defParameter} and to classify the (un)boundedness of the sequences $\sigma_k=\Phi^{\lambda_k}(x)$ of Theorem \ref{thm:convergence} and \ref{thm:convergenceunbound}. In order to construct the (partial) parametrization $\Phi^\lambda$ of $\mathcal{R}^{\lambda}$ we introduce dynamical markings relative to $\mathrm{P}_{\lambda}$ (Section \ref{sec:dynmarkinglambda} and Definition \ref{def:dynmarkinglambda}), and use them both as a model for the dynamics of maps $g\in{\bfnf}_{\lambda,\sigma}$, as well as to construct a model of the locus $\mathcal{R}^{\lambda}$ (see Proposition \ref{prop:dynmarkinglambdabylambda} and Section \ref{chap:gkmodel}). The classification of the boundedness behavior of sequences $\sigma_k=\Phi^{\lambda_k}(x)$ is built on Yoccoz-type inequalities for certain eigenvalues, which are derived from modulus estimates from the star; see Sections \ref{sect:modulus} and \ref{sect:landingwire}, Proposition \ref{prop:eigenvalueconv} and Corollary \ref{cor:fatcycles} and \ref{cor:extracycles}.

Stars for attracting basins were introduced by Petersen in \cite{pet99}, where the case of simply connected quadratic basins is studied. Our objective is infinitely connected quadratic basins. We will give an overview of the construction and properties of stars in the quadratic case, tailored to our application, and refer to \cite{pet99} for more background. We extend main results from \cite{pet99} to the case of infinitely connected basins of quadratic rational maps. 

Given an irreducible rational number $p/q$, $(p,q)=1$, we define as before for $\lambda\in\C^*\setminus \mathbb{S}^1$ the vector $L=L(p/q)=
q\Log(\lambda\e^{-i2\pi p/q})=q\log\lambda-p2\pi i$; here $\Log:
{\C\setminus(-\infty,0]}\to \C$ is the principal branch of $\log$ and
$\log\lambda$ is an appropriate choice of logarithm of $\lambda$, see also Section \ref{results}, before Definition \ref{def:dynmarkingpara}. We define also the number 
\begin{equation}\label{eqn:rlambda}
r_\lambda=r_{\lambda, p/q}:=\frac{|L|}{2q\sin\theta}=\frac{\pi}{q^2 m(\lambda)},
\end{equation}
where $\theta$ is the angle between $L$ and $2\pi i$ and $m(\lambda)$ is defined in Equation (\ref{eqn_mlambdaattrpar}). The number $r_\lambda$ is the radius of the circle through $\log\lambda$, and tangent to the imaginary axis at $2\pi ip/q$, see Figure \ref{radius}. 
\begin{figure}
\begin{center}
   \includegraphics[width=4cm]{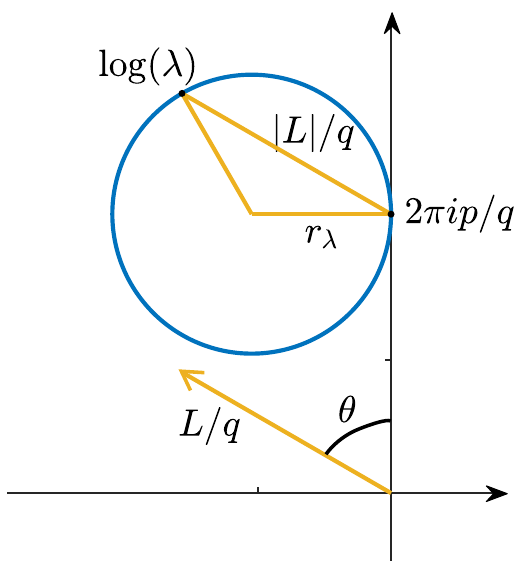} 
\caption{The figure illustrates $r_\lambda$, $\log\lambda$, $L$ and $\theta$.}
    \label{radius}
\end{center}
\end{figure}

We now restrict our attention to $\lambda\in\D_{-\omega}$. For $\lambda\in\D_{-\omega}$ and $\sigma\in \C$, let $g_{\ls}: \CC\to \CC$ denote a quadratic rational map in ${\bfnf}_{\lambda,\sigma}$, with the attracting fixed point of eigenvalue $\lambda$ at $z_0$, so that $g_{\ls}(z_0)=z_0$ and $g_{\ls}'(z_0)=\lambda$. Let
$A_{\lambda,\sigma}$ denote the attracting
basin of $z_0$. Such a map has an extended linearizing coordinate $\phi_{\ls}: A_{\lambda,\sigma} \to \C$, which is a holomorphic and surjective map so that $\phi_{\ls}(z_0)=0$ and $\phi_{\ls}\circ g_{\ls}=\lambda\cdot\phi_{\ls}$. The linearizing coordinate is univalent on some neighborhood of the fixed point and unique up to multiplication by a non-zero constant.
In our notation $g_{\ls}$ denotes a general quadratic rational map and could be a polynomial ($\sigma=0$). When we need to emphasize that the map in question is a polynomial, we use the notation introduced in Section \ref{sect:divergenceinfty} and choose the specific representative $\mathrm{P}_{\lambda}$.

In the following $p/q$ is fixed and $L$ is fixed for a given $\lambda\in\D_{-\omega}$.

\subsection{Definition of the $(\log\lambda,p/q)$-\emph{star} of $A_{\lambda,\sigma}$ for $g_{\ls}$}\label{sect_stardef}

The meromorphic vector field $v(z)\frac{d}{dz}$, given by $v(z)=L\frac{\phi_{\ls}(z)}{\phi_{\ls}'(z)}$, is independent of the choice of normalization of the linearizing coordinate $\phi_{\ls}$ for $g_{\ls}$. The flowlines of the vector field can be thought of as a singular foliation $\mathcal F$ in $A_{\lambda,\sigma}\setminus \bigcup_{n\geq 0} g_{\ls}^{-n}(z_0)$. The following description is convenient to have in mind. The collection of lines parallel with $L$, $\{\R L+iy: y\in \R\}$ defines a foliation of $\C$, and the collection of logarithmic spirals $\{\exp(\R L+iy): y\in \R\}$ defines a foliation of $\C^*$, with leaves $\exp(\R L+iy)$ emanating from infinity and ending at 0. The foliation $\mathcal F$ in $A_{\lambda,\sigma}\setminus \bigcup_{n\geq 0} g_{\ls}^{-n}(z_0)$ is thus equivalently defined as the pullback of the foliation $\{\exp(\R L+iy): y\in \R\}$ in $\C^*$ by a linearizing coordinate $\phi_{\ls}$. See Figure \ref{foliation}.

 Charts for $\mathcal F$ (and linearizing coordinates for the vector field) are of the form $\frac{\log\circ\phi_{\ls}}{L}$, where $\log$ is some branch of the logarithm and $\phi_{\ls}$ as always a linearizing coordinate, whence changes of charts are translations.
In this way, a choice of $L$ defines a unique holomorphic, singular foliation by curves in $A_{\lambda,\sigma}\setminus \bigcup_{n\geq 0} g_{\ls}^{-n}(z_0)$, with singular points precisely at the critical points of $g_{\ls}$ in $A_{\lambda,\sigma}\setminus \bigcup_{n\geq 0} g_{\ls}^{-n}(z_0)$ and their iterated pre-images. 
\begin{figure}
\begin{center}
   \includegraphics[width=12cm]{foliation.pdf} 
\caption{An illustration of leaves of the foliations in $A_{\lambda}\setminus \bigcup_{n\geq 0} \mathrm{P}_{\lambda}^{-n}(0)$, $\C^*$ and $\C$, for $p/q=1/3$ and $\lambda=r \omega_{1/3}$ for an $r>0$.}
    \label{foliation}
\end{center}
\end{figure}

Leaves of $\mathcal F$ either end (in infinite time) at  $z_0$ or one of its iterated pre-images, or connect two singular points of $\mathcal F$. A leaf whose closure contains $z_0$ will be called a \emph{visible} leaf. A visible leaf either begins in finite time at a singular point of $\mathcal F$, or continues in infinite time, both forward and backward. We will say that a point $z\in A_{\lambda,\sigma}$ is \emph{visible}, if $z=z_0$ or $z$ is in the interior of a visible leaf. For a visible point $z$, the time-1 flow of the vector field is equal to the $q$th iterate $g_{\ls}^q(z)$, in particular, a visible leaf is forward invariant under $g_{\ls}^q$. Moreover, visible leaves are grouped in cycles of exact period $q$, each performing a $p/q$ rotation under the map $g_{\ls}$, as can be easily seen from the foliation of $\C$ by $\{\R L+iy: y\in \R\}$. 

There is always at least one visible critical value in $A_{\lambda,\sigma}\setminus \bigcup_{n\geq 0} g_{\ls}^{-n}(z_0)$, and in case $\sigma \in \mathcal{R}^{\lambda}$ there might be two. We will call a visible leaf a \emph{branch}, if it begins at a singular point, every branch is part of a $q$-cycle of branches containing a visible critical value. Let $1\leq k' \leq 2$ denote the number of visible critical values of $g_{\ls}$, let $v_i$, $1\leq i\leq k'$, denote the visible critical values and $c_i$ the corresponding critical points. Let $k$ denote the number of cycles of branches, then $1\leq k\leq k'\leq 2$ with $k<k'$ if and only if $k'=2$ and $v_1$ and $v_2$ belong to the same cycle of branches.

We name branches $\tau_i^j$, $j\in \Z_{q}$ and $1\leq i\leq k$, labelled counterclockwise with respect to their cyclic ordering, and so that $c_i$ is the start point of $\tau_i^0$ (when $k'=2$ there is no preferred choice of ''first'' visible critical value $v_1$).

We now elaborate on the description and properties of the leaves of the foliation. We choose a numbering of the critical values, by letting $v_1$ denote (one of) the visible critical value(s). The linearizing coordinate is then normalized by setting $\phi_{\ls}(v_1)=\lambda$, hence $\phi_{\ls}(c_1)=1$. Let $\zeta_i$ denote a logarithm of $\phi_{\ls}(c_i)$,
where we normalize the logarithm by choosing $\zeta_1$ to be 0. 

Among the lines which are parallel to $L$ we give special attention to those giving rise to branches. Namely, let $\wtt_{i}^j$ denote the lines:
\begin{equation}\label{eqn:tautilde}
\wtt_{i}^j:=tL+\frac {j} {q}2\pi i + \zeta_i \quad \text{for } t\in \R, j\in\Z \text{ and } 1\leq i\leq k,
\end{equation}
and set
\[\wtt_i=\bigcup_{j\in \Z} \wtt_{i}^j \quad \text{ and } \quad \wtt=\bigcup_{1\leq i\leq k} \wtt_{i},\]
where lines in $\wtt$ are named $\wtt^{j}$ according to their horizontal ordering and so that $\wtt^{0}=\wtt_{1}^0$. Note that when $k=2$, $\wtt^{j}\neq\wtt_{i}^j$ in general, since each $\wtt_{1}^j$ is situated between $\wtt_{2}^{j'}$ and $\wtt_{2}^{j'+1}$ for some $j'$ and vice versa. 

The collection of lines $\wtt$ bound open strips $\widetilde U^j$, $j\in \Z$, which are named according to their horizontal ordering and so that $\widetilde U^0$ is bounded below by $\wtt^{0}$. Let $\widetilde \gamma=\bigcup_{j\in\Z} \widetilde \gamma^j$ denote the collection of central straight lines $\widetilde\gamma^j\subset \widetilde U^j$. Each line $\wtt_{i}^j$ and $\widetilde \gamma^j$ and strip $\widetilde U^j$ is invariant under the translation $z\mapsto z+L$, and each $\wtt_{i}^j$ is mapped to $\wtt_{i}^{j+p}$ (and $\wtt^j$, $\widetilde \gamma^j$ and $\widetilde U^j$ to $\wtt^{j+kp}$, $\widetilde \gamma^{j+kp}$ and $\widetilde U^{j+kp}$ respectively) under the translation $z\mapsto z+\log\lambda$.

Each of the collections $\wtt$ and $\widetilde \gamma$ projects by the exponential map to $k$ $q$-cycles, under multiplication by $\lambda$, of disjoint logarithmic spirals and the strips $\widetilde U^j$ to $k$ $q$-cycles of "strips". 
For $j\in \Z_q$ let $\wht_i^j=\exp(\wtt_{i}^j)$. Each $q$-cycle $\wht_i=\exp(\wtt_{i})$ of logarithmic spirals contains the non-zero critical values of $\phi_{\ls}$ coming from $c_i$: $\{\lambda^{-n}\phi_{\ls}(c_i): n\in \Z, n\geq 0\}$. 
The image of the branch $\tau_i^j$ under $\phi_{\ls}$ is contained in $\wht_i^j$, $\phi_{\ls}(\tau_i^j)\subset \wht_i^j$. In fact, since branches $\tau_i^j$ start from points in the backward orbit of the critical value $v_i$, the start point of $\tau_i^j$ corresponds to $t=\frac{-(q-n(j))}{q}$ and $\phi_{\ls}(\tau_i^j)= \widehat{\tau}_i^j|_{t>\frac{-(q-\kappa(j))}{q}}$, where 
\begin{equation}\label{eqn_nj}
\kappa(j)=j/p\in\Z_q \text{ using the representative in } \{1, ..., q\},
\end{equation}
see Figure \ref{fig_star}.

Let $\Upsilon$ denote the collection of visible leaves of $\mathcal F$ (including branches) and for $j\in \Z_{kq}$, let $U^j\subset A_{\lambda,\sigma}$ be the unique connected component of $\phi_{\ls}^{-1}( \exp(\widetilde U^j))$ with $z_0$ on the boundary.

\begin{definition}[The $(\log\lambda,p/q)$-\emph{star} of $A_{\lambda,\sigma}$ for $g_{\ls}$.]
 The $(\log\lambda,p/q)$-\emph{star} of $A_{\lambda,\sigma}$ for $g_{\ls}$, here denoted $\Sigma_{\lambda, \sigma}$, is defined as the following open subset of $A_{\lambda,\sigma}$:
 \[\Sigma_{\lambda, \sigma}=\{z\in A_{\lambda,\sigma} : z \text{ is visible}\}.\]
It follows that
\[\Sigma_{\lambda, \sigma}=\inter\left(\bigcup_{\upsilon\in \Upsilon}\overline \upsilon\right)=\inter(\bigcup_{j\in \Z_{kq}}\overline U^j).\]
\end{definition}

To ease notation, and since $p/q$ is fixed, we have suppressed the dependence on $p/q$.
When $p/q$ and $\log\lambda$ are clear from the context, we will just refer to $\Sigma_{\ls}$ as the \emph{star} for $g_{\ls}$. 
A star can also be described as a maximal domain of univalence for the linearizing coordinate, obtained via analytic extension along logarithmic spirals of direction $L$ in the co-domain. 
That is, the star corresponds to a $kq$-times slit domain in the log-linearizing coordinates, namely slit along the collection of lines $\wtt_{1}$ (and $\wtt_{2}$ for $k=2$), at 0 and $\zeta_2$ and their backward orbits under $z\mapsto z + \log\lambda$, respectively; see Figure \ref{fig_star}.

For each $j\in \Z_{kq}$:
\begin{itemize}
\item the set $U^j$ is called the \emph{$j$th strip} of the star,
\item the leaf $\gamma^j=\phi_{\ls}^{-1}( \exp(\widetilde\gamma^{j}))\cap U^j$ is called the \emph{$j$th wire} of the star,
\item the set $\overline U^j\setminus A_{\lambda,\sigma}$ is called the \emph{$j$th tip} of the star,
\item the set $\tau=\bigcup_{j\in \Z_{q}, 1\leq i\leq k} \tau_i^j \cup \{z_0\}$ is called the \emph{twig} of the star and the set $\tau_i=\bigcup_{j\in \Z_{q}} \tau_i^j \cup \{z_0\}$ the twig associated to the critical value $v_i$. The twig $\tau_i$ contains the forward orbit of $v_i=g_{\ls}(c_i)$. 
\end{itemize}
Sometimes we will need to consider a truncated twig, or truncated branches of the twig, in this case we will use the parametrization from Equation (\ref{eqn:tautilde}), so that, for example, $\tau_i^j|_{t\geq K}$ is defined by $\phi_{\ls}(\tau_i^j|_{t\geq K})=\widehat{\tau}_{i}^j|_{t\geq K}=\exp(\wtt_{i}^j|_{t\geq K})$.

\begin{figure}
\begin{center}
     \includegraphics[width=13.7cm]{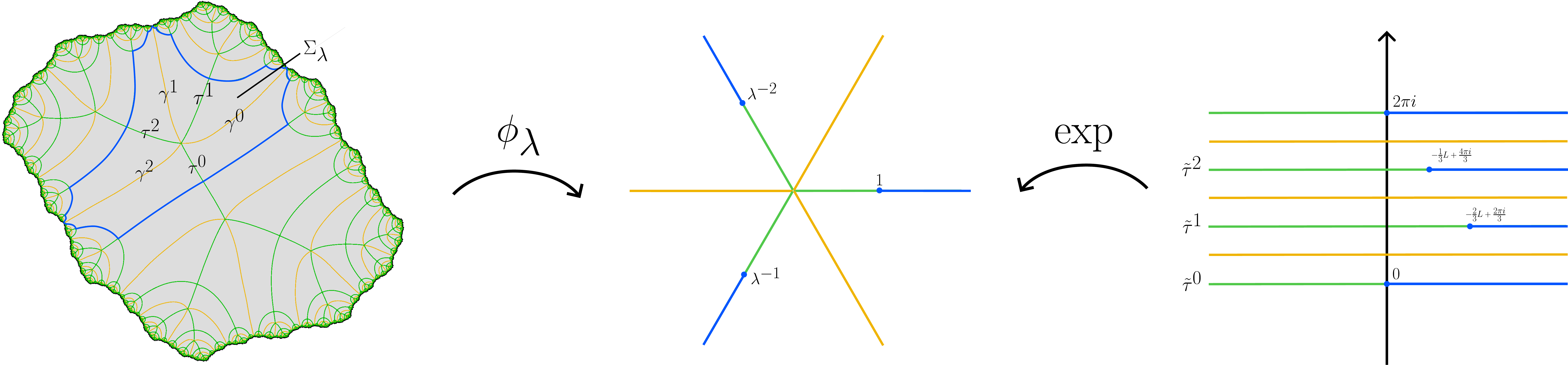}
\caption{A 1/3-star in the basin of 0 for $\mathrm{P}_{\lambda}$, $\lambda=r\omega_{1/3}$, $r<1$. The star is the domain bounded by the blue contour. The figure also shows branches $\tau^j$ (green) and wires $\gamma^j$ (orange).  Dynamical picture by Arnaud Ch{\'e}ritat.}
\label{fig_star}
\end{center}
\end{figure}

We summarize the important properties of the star. 
For each $j\in \Z_{kq}$ the log-linearizing coordinate $\log\circ\, \phi_{\ls}: U^j\to \widetilde U^j$ is biholomorphic, and conjugates $g_{\ls}^q$ to $z\mapsto z+L$,
and the strip $U^j$ and the wire $\gamma^j$ are invariant under $g_{\ls}^q$. The collection of strips $U^j$ and wires $\gamma^j$ are each grouped in $k$ $q$-orbits under $g_{\ls}$, and each orbit performs a $p/q$ rotation around $z_0$. 
Similarly, branches $\tau^j$ are grouped in $k$ twigs: forward invariant $q$-cycles of branches. Note that $g_{\ls}$ maps the star univalently into itself. 

\paragraph{A $q$-cycle of wires.} A $q$-cycle of wires will be denoted $\gamma$, where we will understand $\gamma$ as the union of the wires in the cycle and $z_0$.

\paragraph{Landing of wires.} Let $\gamma$ denote a $q$-cycle of wires of the star. We will say that the wires \emph{land}, if the corresponding tips of the star are one point sets.

\begin{lemma}\cite[Lemma 3.3 and 3.4]{pet99}\label{lemma:wiresland}
For $\lambda\in\D_{-\omega}$ and $\sigma\in \C$ let $g=g_{\ls}\in{\bfnf}_{\lambda,\sigma}$. Any $q$-cycle of wires $\gamma$ in the $(\log\lambda,p/q)$-\emph{star} of $A_{\lambda,\sigma}$ for $g$ lands. Moreover, the wires in the cycle either land together on a fixed point of $g$, or they land separately on a $q$-cycle of $g$.
\end{lemma}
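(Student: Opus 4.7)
The plan is to show first that each individual tip $T^j:=\overline{U^j}\setminus A_{\lambda,\sigma}$ of the star is a single point, and then to classify how the $q$ tips attached to a wire cycle $\gamma$ sit under $g_{\ls}$.

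Recall that $\log\circ\,\phi_{\ls}\colon U^j\to\widetilde U^j$ is a biholomorphism conjugating $g_{\ls}^q|_{U^j}$ to the translation $\widetilde z\mapsto \widetilde z+L$ and sending $\gamma^j$ to a straight line parallel to $L$. Parametrize $\gamma^j$ by $t\in\R$ in this coordinate; since $\Re L<0$, the direction $t\to+\infty$ corresponds to $|\phi_{\ls}|\to 0$ and hence to approaching $z_0$, while $t\to-\infty$ corresponds to $|\phi_{\ls}|\to\infty$ and forces points to leave every compact subset of $A_{\lambda,\sigma}$. A direct check yields
\[
T^j\;=\;\bigcap_{N\geq 1}\overline{\{z\in U^j:|\phi_{\ls}(z)|\geq N\}},
\]
a nested intersection of nonempty compact connected subsets of $\CC$; hence $T^j$ is compact and connected.

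The key step is a hyperbolic shrinking argument. The map $g_{\ls}^q$ is a hyperbolic isometry of the simply connected region $U^j$, with attracting ideal endpoint at $z_0$ and repelling ideal endpoint over $T^j$. Fix a base point $z^\star\in\gamma^j$ and set $z_n:=(g_{\ls}^q)^{-n}(z^\star)$. Any spherical accumulation point $w$ of $(z_n)$ must lie in $\CC\setminus A_{\lambda,\sigma}$: otherwise $w\in A_{\lambda,\sigma}$ and continuity of $\phi_{\ls}$ would give $\phi_{\ls}(w)=\lim\lambda^{-n}\phi_{\ls}(z^\star)=\infty$, a contradiction. Consequently $\mathrm{dist}_{\mathrm{sph}}(z_n,\partial U^j)\to 0$, and the standard Koebe/Schwarz--Pick comparison between hyperbolic density and reciprocal spherical distance to the boundary shows that every hyperbolic ball $B_{\mathrm{hyp}}(z_n,R)\subset U^j$ of fixed radius $R$ has spherical diameter tending to $0$. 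Choosing $R$ larger than both the hyperbolic width of $\widetilde U^j$ and the hyperbolic step $\mathrm{dist}_{\mathrm{hyp}}(z_n,z_{n+1})$ --- both finite by the modulus bound $\mathrm{mod}(U^j/g_{\ls}^q)\leq\tfrac{2\pi\sin\theta}{q|L|}$ from Section~\ref{sect:propstar} --- the union $\bigcup_{n\geq N}B_{\mathrm{hyp}}(z_n,R)$ covers $\{z\in U^j:|\phi_{\ls}(z)|\geq N'\}$ for some $N'=N'(N)\to\infty$. Combined with the vanishing of the spherical diameters, this forces $T^j$ to be a single point.

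For the classification, the equivariance $g_{\ls}(U^j)=U^{(j+kp)\bmod kq}$ gives $g_{\ls}(T^j)=T^{(j+kp)\bmod kq}$, so the $q$ tips attached to $\gamma$ form a single $g_{\ls}$-orbit. If they all coincide, the common value is a fixed point of $g_{\ls}$. Otherwise, an orbit-portrait style argument excludes proper sub-cycles: two distinct wires of $\gamma$ landing at the same non-fixed periodic point would induce a combinatorial rotation number at that landing point incompatible with the $p/q$ rotation of the wire cycle around $z_0$. Hence the remaining possibility is that the tips form a genuine $q$-cycle.

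I expect the main obstacle to be the covering step in the shrinking argument: one must verify that hyperbolic balls of the chosen radius really exhaust a neighborhood of $T^j$ inside $U^j$, not just accumulate along $\gamma^j$. This reduces to the finite hyperbolic width of $\widetilde U^j$, encoded in the modulus bound of Section~\ref{sect:propstar}. The two-critical-point case ($k=2$) should be handled uniformly because each $U^j$ remains simply connected with finite modulus; only the grouping of strips into essential strips requires minor bookkeeping.
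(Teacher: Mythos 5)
The lemma you are proving is quoted from Petersen's paper (\cite[Lemmas 3.3 and 3.4]{pet99}); Uhre cites it and does not reproduce a proof, so there is no in-paper argument to check yours against. Judged on its own terms, your proposal follows a plausible hyperbolic-shrinking strategy, but it has a genuine gap precisely at the step you yourself flag, and the difficulty is worse than you suggest.

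The covering claim fails because the ``hyperbolic width'' of the strip $\widetilde U^j$ is \emph{infinite}, not finite. What the modulus bound $\operatorname{mod}(U^j/g_{\ls}^q)\leq\frac{2\pi\sin\theta}{q|L|}$ controls is the extremal (conformal) width of the quotient annulus; the hyperbolic metric on $\widetilde U^j$ blows up like $1/\text{dist to }\partial\widetilde U^j$ near the bounding lines, so the boundary is at infinite hyperbolic distance from the central wire. Consequently, no fixed $R$ makes $\bigcup_{n\geq N}B_{\mathrm{hyp}}(z_n,R)$ cover $\{z\in U^j:|\phi_{\ls}(z)|\geq N'\}$: points of $U^j$ that drift hyperbolically toward the boundary lines (i.e.\ toward the branches of the twig) far down the strip escape every such tube. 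Your shrinking argument legitimately proves that the geodesic ray $\gamma^j$ itself \emph{lands} at a single point, but the paper's definition of ``wires land'' requires the whole tip $T^j=\overline{U^j}\setminus A_{\lambda,\sigma}$ to be a singleton, which is a prime-end \emph{impression} statement, strictly stronger than landing of the accessible ray. Bridging that gap needs an additional argument (e.g.\ controlling the impression via the dynamics on the twig, or a Gr\"otzsch/area argument as in Petersen), not the finite-width reduction you propose. Separately, the ``orbit-portrait style'' dismissal of intermediate periods $1<d<q$ in the classification is only a sketch; the underlying computation (writing $p'd=pq$ from matching rotation numbers and using $\gcd(p,q)=1$ to force $d=q$) is correct, but as stated it is not a proof, and you should also address why the $q$ tips cannot partially coincide without all coinciding.
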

Figure \ref{fig_stars} shows examples of stars, and illustrates cases with 1 or 2 cycles of wires, and landing on fixed points and $q$-cycles respectively. 

Note that by our convention $v_1\in \Sigma_{\ls}$ always and that $k=2$ if and only if $v_2\in \Sigma_{\ls}\setminus \tau_1$. We expand on the case $v_2\in \tau_1$, since it will play a role later.  

\paragraph{Two visible critical values on the same twig.} 
As usual, we label one of the critical values 
$v_1$ and normalize by $\phi_{\ls}(v_1)=\lambda$. Then $v_2$ is on a branch $\tau_1^j$, $j\in \Z_q$. 

When there are two critical values on the same twig, the orbits of the two can not be too far apart: both critical values must be in front of the backward orbit of the other, in order to be visible. 
To be precise, let 
\begin{equation}\label{eqn_lj}
s(j)=j/p\in\Z_q \text{ using the representative in } \{2, ..., q+1\}, 
\end{equation}
then
\begin{equation}\label{eqn_2on1twig}
v_2\in \tau_1^j|_{t<\frac{s(j)}{q}},
\end{equation}
where we again use the parametrization from Equation (\ref{eqn:tautilde}).

\begin{figure}
  \begin{center}
    \begin{tabular}{cc}
      \includegraphics[width=4.7cm]{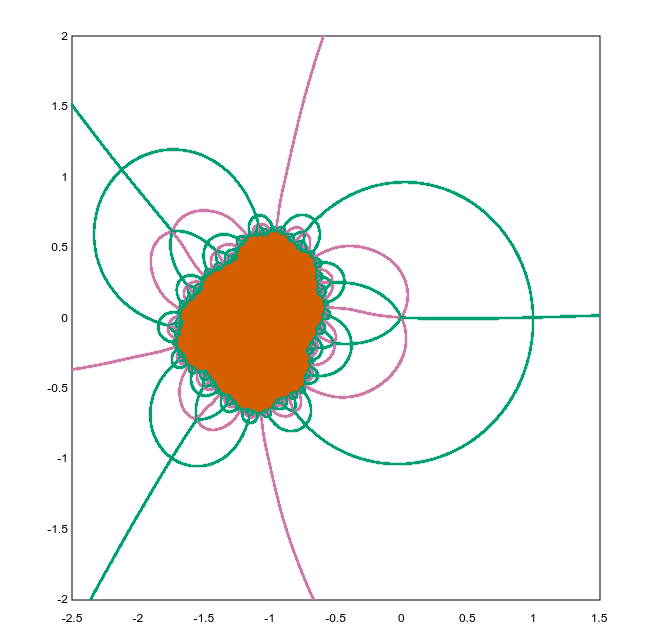} &
    \includegraphics[width=4.7cm]{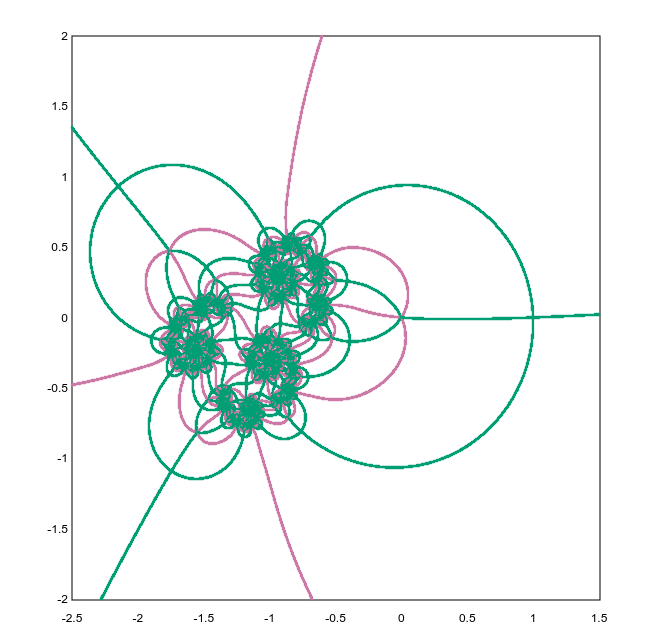}  \\
      \includegraphics[width=4.7cm]{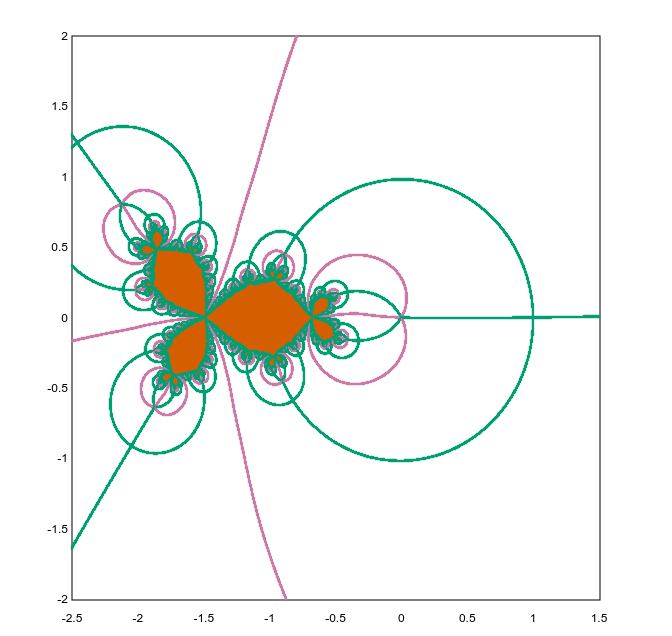} &
    \includegraphics[width=4.7cm]{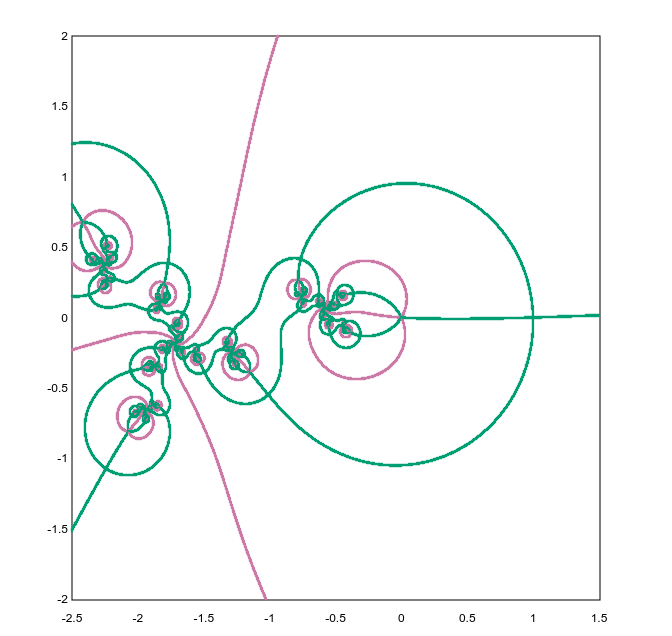}  \\
      \includegraphics[width=4.7cm]{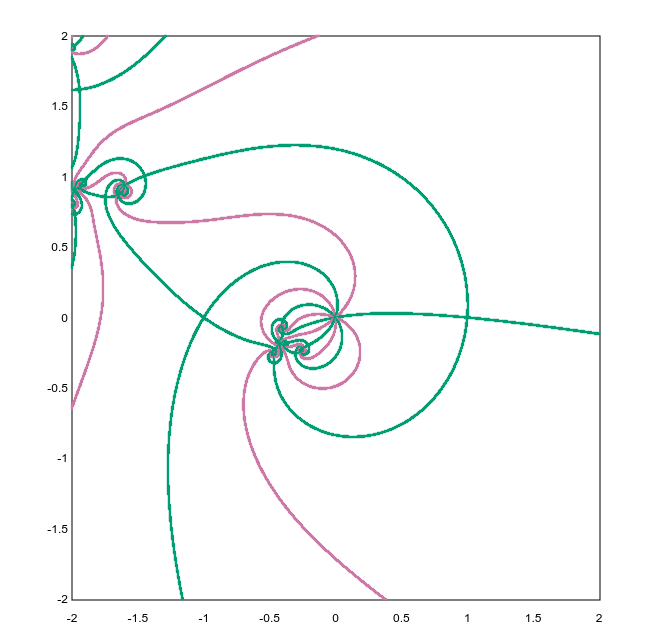} &
    \includegraphics[width=4.7cm]{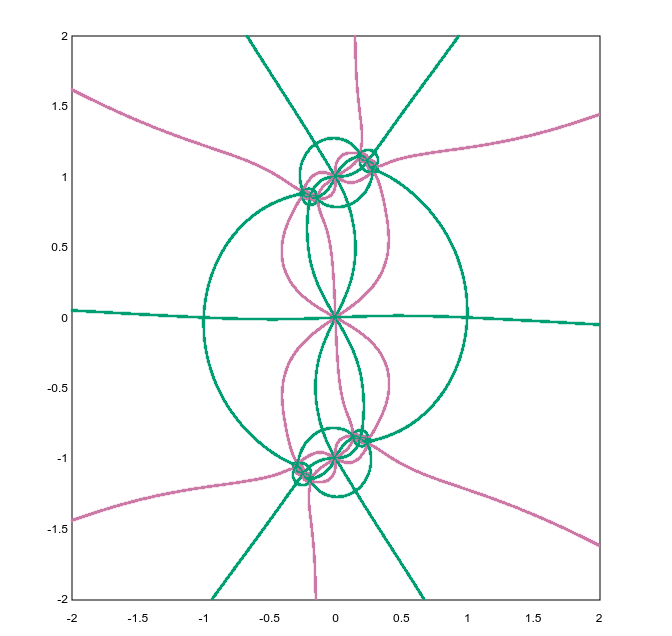}  
    \end{tabular}
  \end{center}
  \caption{Shows stars for $g_{\lambda,a}\in \Gamma_{\lambda,\sigma}$ for $\lambda=\frac{1}{2}\omega_{1/3}$ and different choices of $\sigma$. We conjugate by $z\mapsto \frac{1}{z}$ to put infinity at 0. Branches (and their pre-images) are shown in green, and wires (and their pre-images) are shown in red.
  The top row shows a situation where $k=1$ and the only cycle of wires $\gamma$ lands on a $q$-cycle ($\sigma\in \mathcal{R}^\lambda$ on the right and $\sigma\in M^\lambda$ on the left for comparison). 
    The middle row shows a situation where $k=1$ and $\gamma$ lands on a fixed point ($\sigma\in \mathcal{R}^\lambda$ on the right and $\sigma\in M^\lambda$ on the left for comparison).
    The bottom row shows cases where $k=2$. On the left
    one cycle of wires land on a fixed point and the other on a $q$-cycle. On the right, both cycles of wires land on $q$-cycles. Program by Christian Henriksen. } \label{fig_stars}
\end{figure}

\subsection{Dynamical marking relative to $\mathrm{P}_\lambda$}\label{sec:dynmarkinglambda}
The purpose of this section is to use stars to compare the structure of the basin $A_{\ls}$ to the basin $A_\lambda$ of $\mathrm{P}_\lambda$. In particular, we match the star $\Sigma_{\ls}$ for $g=g_{\ls}$ to the star $\Sigma_{\lambda}$ for $\mathrm{P}_\lambda$, in order to record the position of the critical values of $g$ relative to the $p/q$-star $\Sigma_{\lambda}$ for $\mathrm{P}_\lambda$. This will define a dynamical marking rel $\mathrm{P}_\lambda$. 
First we consider $\mathrm{P}_\lambda$ in a little more detail.

\paragraph{The polynomial $\mathrm{P}_\lambda$.} For $\lambda\in \D_{-\omega}$, consider the polynomial $\mathrm{P}_\lambda:\CC\to\CC$, given by:
\[\mathrm{P}_\lambda(z)=\lambda z+z^2.\] 
$\mathrm{P}_\lambda$ has an attracting fixed point at 0 with eigenvalue $\lambda$ and a critical point at $-\lambda/2$, with critical value $-\lambda^2/4$. Let $A_\lambda$ denote the attracting basin of 0 for $\mathrm{P}_{\lambda}$ and $\phi_\lambda:A_\lambda\to\C$ an extended linearizing coordinate for $\mathrm{P}_{\lambda}$, that is a holomorphic and surjective map so that $\phi_\lambda(0)=0$ and $\phi_\lambda\circ\mathrm{P}_{\lambda} = \lambda \cdot \phi_\lambda$, normalized so that $\phi_\lambda(-\lambda/2)=1$.

We let $\Sigma_\lambda$ denote the $(\log\lambda,p/q)$-star of $A_\lambda$ for $\mathrm{P}_{\lambda}$. There is one critical point in the basin $A_\lambda$, whence $k'=k=1$. Let $\gamma_\lambda$ denote the $q$-cycle of wires,  and $\tau_\lambda$ the twig, for the star $\Sigma_\lambda$ for $\mathrm{P}_{\lambda}$. The Julia set of $\mathrm{P}_{\lambda}$ is a quasi-circle, in particular a Jordan curve, so $\gamma_\lambda$ necessarily lands on a $q$-cycle of $\mathrm{P}_{\lambda}$. The $q$-cycle of wires $\gamma_\lambda$ separates the basin $A_{\lambda}$ into $q$ simply connected components, the component that contains the critical value $-\lambda^2/4$ will be called the \emph{critical value sector given by the $p/q$-star}, and denoted $S^p_\lambda$, see Figure \ref{critvaluesector}.
\begin{figure}
\begin{center}
     \includegraphics[width=4cm]{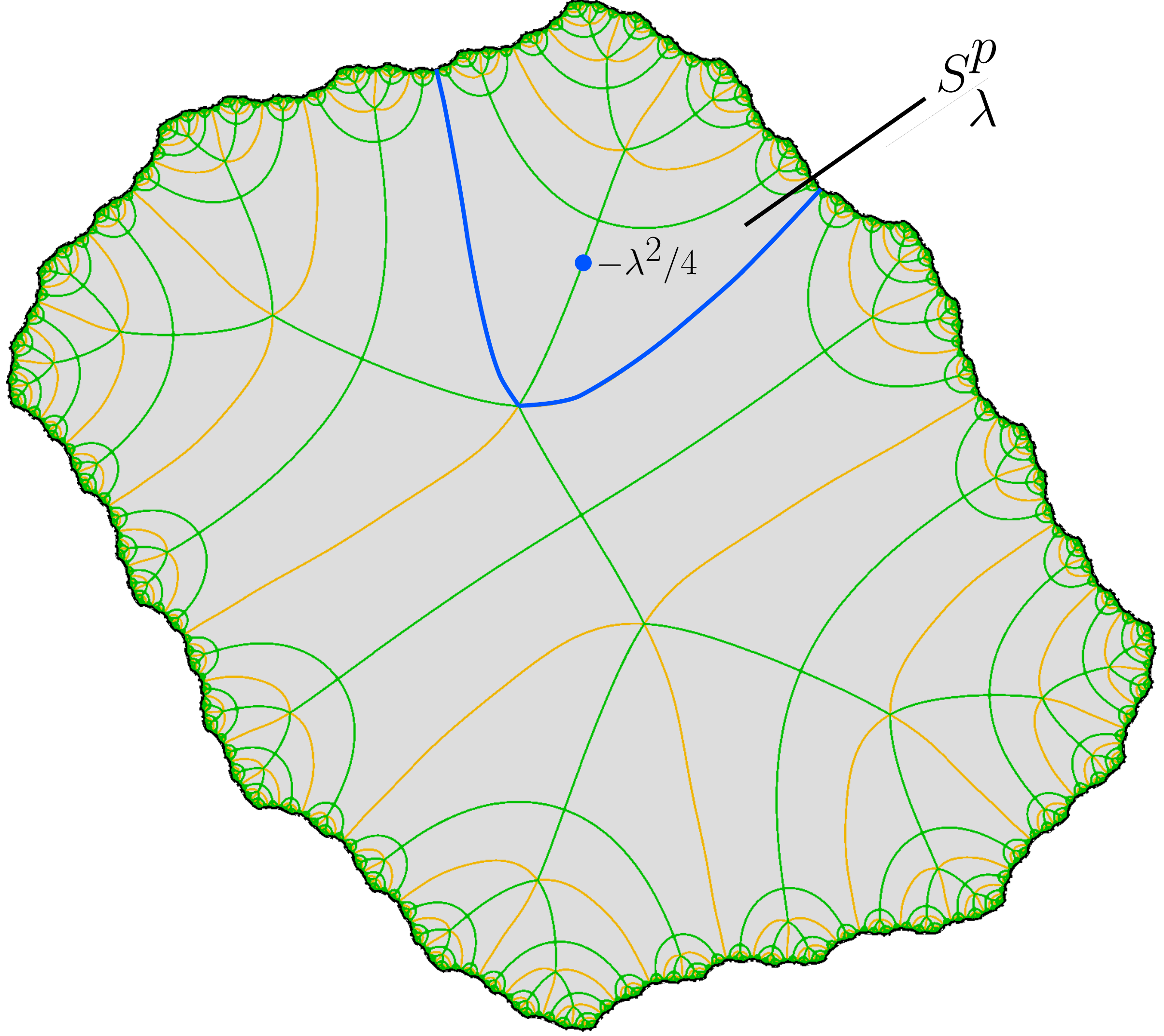}
\caption{\label{critvaluesector} The basin of 0 for $\mathrm{P}_{\lambda}$, $\lambda=r\omega_{1/3}$, $r<1$, showing the critical value sector $S_\lambda^p$, bounded by the wires in blue. Dynamical picture by Arnaud Ch{\'e}ritat.}
\end{center}
\end{figure}

The idea behind the formal definition of dynamical marking relative to $\mathrm{P}_\lambda$ (Definition \ref{def:dynmarkinglambda}) is the following. 
Suppose $\lambda\in\D^*$ and $\sigma\in \mathcal{R}^{\lambda}$ and let $g\in{\bfnf}_{\lambda,\sigma}$. 
Since $\sigma\in \mathcal{R}^{\lambda}$, $g$ has two critical values in the attracting basin $A_{\lambda,\sigma}$.  As always, we choose a labeling of the critical values of $g$, so that $v_1$ is a visible critical value and let the linearizing coordinate $\phi_{\lambda,\sigma}$ be normalized by $\phi_{\lambda,\sigma}(v_1)=\lambda$. Let $\psi$ be defined by $\phi_{\lambda,\sigma}^{-1}\circ\phi_{\lambda}$ on the domain $\phi_\lambda^{-1}(\phi_{\lambda,\sigma}(\Sigma_{\lambda,\sigma}))\cap \Sigma_{\lambda}$, where $\phi_{\lambda,\sigma}^{-1}$ is the inverse of the restriction $\phi_{\lambda,\sigma}|_{\Sigma_{\lambda,\sigma}}$. Thus, $\psi(0)=z_0$, $\psi(-\frac{\lambda^2}{4})=v_1$, $\psi$ is a conformal isomorphism onto its image and obeys the functional equation
$\psi\circ \mathrm{P}_\lambda=g\circ \psi$.

If necessary, $\psi$ is then extended by iterated use of this functional equation until both critical values of $g$ are in the range of $\psi$, and we set $x:=\psi^{-1}(v_2)$. We then say that $(x,\psi)$ is a dynamical marking of $g$. We elaborate on the definition of dynamical marking rel $\mathrm{P}_\lambda$ and the domain of a dynamical marker map $\psi$ in the following.

Let $\tau^*_\lambda$ denote the twig for $\Sigma_\lambda$, cut off to start at the first $q$ points in the forward orbit of the critical value $-\lambda^2/4$, namely $\mathrm{P}^{i}_\lambda(-\lambda^2/4)$ $i=1, \ldots, q$, i.e. 
\begin{equation}\label{truncatedtwig}
\tau_\lambda^*:=\bigcup_{j=0}^{q-1}{\tau^j}_{|t\geq\frac{s(j)}{q}},
\end{equation}
where $s(j)$ is the representative of $j/p \in \Z_{q}$ in $\{2, \ldots, q+1\}$ as in Equation (\ref{eqn_lj}). Notice that if $\phi_{\lambda,\sigma}(v_1)=\lambda$ and $v_2 \in \tau_1$, then
$x\in \tau_\lambda\setminus \tau^*_\lambda$, by Equation (\ref{eqn_lj}) and the discussion preceding it.

Let $\Sigma_\lambda^*= \Sigma_\lambda\setminus \tau_\lambda^*$, see Figure \ref{attractingdomains}.

\begin{figure}
\begin{center}
    \includegraphics[width=9cm]{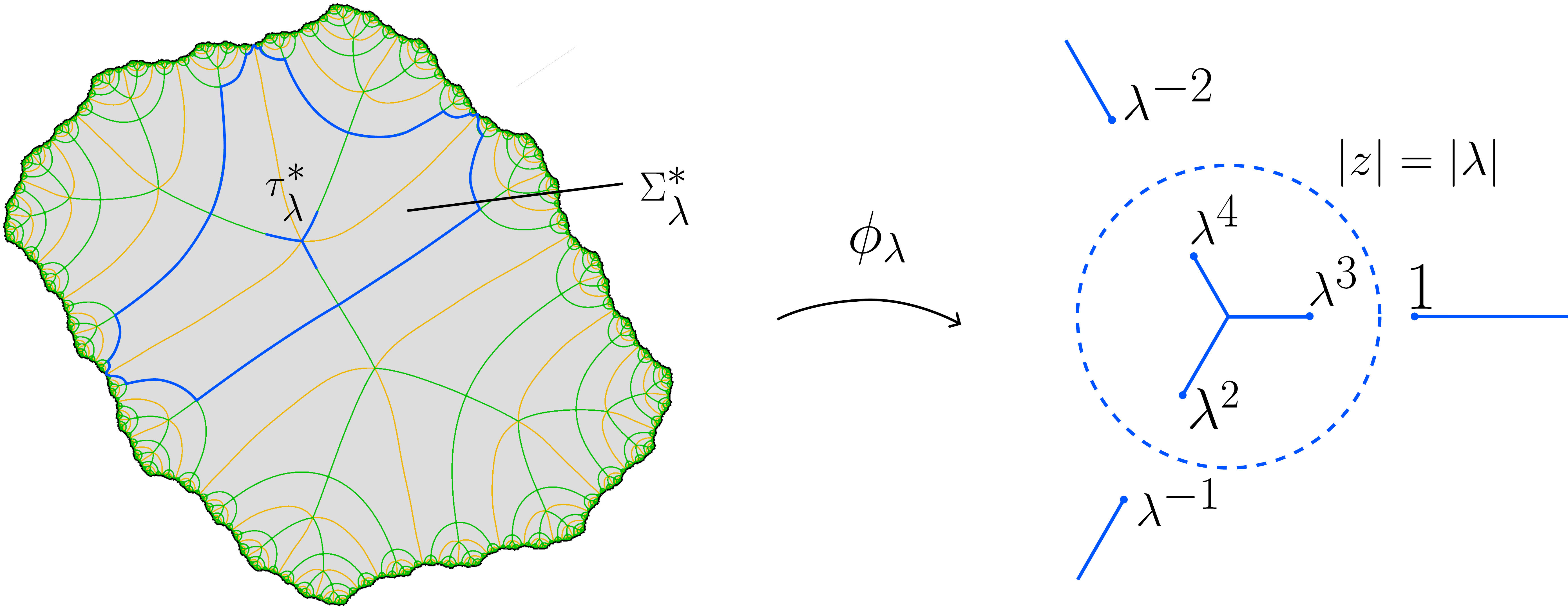}
\end{center}
	\caption{Illustration of the cut-off twig $\tau_\lambda^*$ and the domain $\Sigma^*_\lambda$.}
    \label{attractingdomains}
\end{figure}

\subsubsection{Definition of dynamical marking relative to $\mathrm{P}_\lambda$}\label{sect:dynmarkrelPl}

For the following, fix a $\lambda \in\D_{-\omega}$. For $x\in A_\lambda\setminus (\tau_\lambda^*\cup \{-\lambda^2/4\})$,
\begin{itemize}
 \item if $x\in \Sigma^*_\lambda\setminus \{-\frac{\lambda^2}{4}\}$ 
 let $U=U_\lambda(x):=\Sigma_\lambda\setminus \upsilon_{\{\mathrm{P}^{-i}_\lambda(x)\}_{i=1}^q}$, where $\upsilon_{\{\mathrm{P}^{-i}_\lambda(x)\}_{i=1}^q}$ is the cycle of leaves (or branches) containing $x$, cut off to end at the first $q$ points in the backwards orbit of $x$ in $\Sigma_\lambda$,  $\{\mathrm{P}^{-i}_\lambda(x)\}_{i=1}^q$, instead of at 0. See the proof of Proposition \ref{prop:dynmarkinglambdabylambda} for a precise parametrization. 
 \item If $x\in A_\lambda\setminus \Sigma_\lambda$ let $l=l(x)\geq 1$ be minimal such that $\mathrm{P}_\lambda^{l}(x)\in \Sigma_\lambda$ and set $U=U_\lambda(x):=\mathrm{P}_\lambda^{-l}(\Sigma_\lambda)$.
 \end{itemize}
In either case, the set $U_\lambda(x)$ is open, connected, simply connected, forward invariant under $\mathrm{P}_\lambda$ and $0, -\lambda^2/4, x \in U_\lambda(x)$. 

\begin{definition}[Dynamical marking relative to $\mathrm{P}_\lambda$]\label{def:dynmarkinglambda}
Let $\lambda\in \D_{-\omega}$, $\sigma \in \mathcal{R}^\lambda$ and let $g\in \Gamma_{\lambda,\sigma}$ with a fixed point at $z_0$ of eigenvalue $\lambda$. A \emph{dynamical marking} of $g$ \emph{relative} to $\mathrm{P}_\lambda$ (written rel $\mathrm{P}_\lambda$) is a pair $(x,\psi)$ where $x\in A_\lambda\setminus (\tau^*_\lambda\cup \{-\lambda^2/4\})$ and $\psi: U\to V$ is a holomorphic conjugacy, 
\begin{equation}\label{functeqnattracting}
\psi\circ \mathrm{P}_\lambda=g\circ \psi,
\end{equation} 
with $\psi(-\frac{\lambda^2}{4})$ and $\psi(x)$ equal to the critical values of $g$, and where $U=U_{\lambda}(x)$ is as defined above.
\end{definition}
Note that $-\frac{\lambda^2}{4}$ is excluded as a possible dynamical marking, since $v_1\neq v_2$ always for quadratic rational maps. Points in $\tau^*_\lambda$ also never occur as dynamical markings rel $\mathrm{P}_\lambda$, see Equation (\ref{eqn_2on1twig}) and the preceding discussion. 

Also note that the domain $U$ of the dynamical marker map $\psi$ depends on the choice of $p/q$, since it is constructed from the $p/q$ star for $\mathrm{P}_\lambda$. However, the dynamical marking $x$ does not depend on $p/q$, and the definition could be made with a different choice of domain; the present choice is convenient for our purpose. For this reason and to ease the notation, the dependence on $p/q$ will be implicitly understood and suppressed in the notation. Also, when $\mathrm{P}_\lambda$ is understood from the context, we will omit "rel $\mathrm{P}_\lambda$". 

A dynamical marking induces a labelling  $v_1=\psi(-\frac{\lambda^2}{4})$ and $v_2=\psi(x)$ of critical values of $g$. We will say that $g$ is \emph{dynamically marked} by $(x,\psi)$ if $(x,\psi)$ is a dynamical marking of $g$. It follows from the required properties of $U$ and $\psi$ that $\psi(0)=z_0$ and that $V$ is an open, connected, simply connected, forward invariant subset of the attracting basin $A_{\lambda,\sigma}$ of $g$ at $z_0$.

The following lemma implies that distinct equivalence classes have distinct markings. 
\begin{lemma}\label{lemma:pullback}
Let $\lambda \in \D_{-\omega}$ and $\sigma,\sigma' \in\mathcal{R}^\lambda$. If $g\in \Gamma_{\lambda,\sigma}$ is dynamically marked by $(x,\psi)$ and $g'\in \Gamma_{\lambda,\sigma'}$ is dynamically marked by $(x,\psi')$, both rel $\mathrm{P}_\lambda$,
then $\psi'\circ\psi^{-1}$ extends to a M\"obius conjugacy of $g$ to $g'$, in particular $\Gamma_{\lambda,\sigma}=\Gamma_{\lambda,\sigma'}$ and equivalently $\sigma=\sigma'$. 
\end{lemma}
The lemma is proved by a standard pull-back argument, we will not prove it here. 

As another consequence of Lemma \ref{lemma:pullback}, if a map $g$ is dynamically marked by $(x,\psi)$ then $\psi$ is unique up to automorphisms of $g$. In view of this property, we will speak of $x$ as a dynamical marking of $g$. 
 
Moreover, if $x$ is a dynamical marking of $g\in \Gamma_{\lambda,\sigma}$, then $x$ is a conformal invariant of $\Gamma_{\lambda,\sigma}$, i.e. $x$ is a dynamical marking of any $f\in \Gamma_{\lambda,\sigma}$: if $(x,\psi)$ is a marking of $g$, then $(x, H\circ\psi)$ is a marking of $f$, where $H$ is a M\"{o}bius transformation conjugating $g$ to $f$. For this reason we will also speak of $x$ as a dynamical marking of the equivalence class $\Gamma_{\lambda,\sigma}$ or of its parameter $\sigma\in \mathcal{R}^\lambda$.

\subsubsection{Construction of dynamical markings}\label{subsect:dynconj}
 Consider the map $z\mapsto \lambda^2/z$ on $\C^*$, it is an involution that preserves the circle $|z|=|\lambda|$, and with fixed points $\lambda$ and $-\lambda$. 
 Let $\mathcal{U}_\lambda$ denote the component of the pre-image by $\phi_\lambda$ of the disc $|z|<|\lambda|$, containing 0.
 Then $I_\lambda:=\phi^{-1}_{\lambda}\circ \frac{\lambda^2}{\phi_{\lambda}}:\Sigma^*_\lambda\to \Sigma^*_\lambda$ is an involution preserving $\partial \mathcal{U}_\lambda$, with fixed points $\phi_\lambda^{-1}(\lambda)=-\lambda^2/4$ and $\phi_\lambda^{-1}(-\lambda)$. 
Notice that $\Sigma^*_\lambda$ is the largest domain in $\Sigma_\lambda$ on which $I_\lambda$ can be defined, since the start points of the truncated twig $\tau^*_\lambda$ (i.e. ${\tau^j}|_{t=\frac{s(j)}{q}}$, see Equation (\ref{truncatedtwig})) are exactly the images of the start points of branches ($\overline{\tau^j}|_{t=\frac{-(q-\kappa(j))}{q}}$, see  Equation (\ref{eqn_nj})) under the involution $I_\lambda$.

We elaborate on the construction of dynamical markings in the proof of the following proposition.
\begin{prop}\label{prop:dynmarkinglambdabylambda}
Let $\lambda\in \D_{-\omega}$. For each $\sigma\in \mathcal{R}^\lambda$ there exists an $x\in 
A_\lambda\setminus (\tau^*_\lambda\cup \{-\frac{\lambda^2}{4}\})
$, such that $\Gamma_{\lambda,\sigma}$ is dynamically marked by $x$ rel $\mathrm{P}_\lambda$. 
Either
\begin{itemize}
\item $x\notin \Sigma_{\lambda}$ and $x$ is the unique marking of $\sigma$, or
\item $x\in \Sigma^*_\lambda\setminus \{-\frac{\lambda^2}{4}\}$ and $x$, $I_\lambda(x)$ are the only markings of $\sigma$.
\end{itemize}
\end{prop}
\begin{proof}
Let $\lambda\in \D_{-\omega}$, $\sigma\in \mathcal{R}^\lambda$ and $g=g_{\lambda,\sigma}\in \Gamma_{\lambda,\sigma}$.
Suppose $(x,\psi)$ is a dynamical marking of $g$, then $\phi_{\lambda,\sigma}\circ\psi$ is a linearizing coordinate for $\mathrm{P}_\lambda$. Thus, the marker map $\psi$ preserves the foliations in the sense that it maps (visible) leaves in $U\subset A_\lambda$ to (visible) leaves in $V\subset A_{\ls}$. In particular, the critical value  dynamically labeled $v_1=\psi(-\lambda^2/4)$ is always a visible critical value of $g$. Since linearizing coordinates are unique up to normalization, $\phi_{\lambda,\sigma}$ can be normalized so that $\phi_{\lambda,\sigma}\circ\psi=\phi_{\lambda}$ on $U$, by letting $\phi_{\lambda,\sigma}(v_1)=\phi_{\lambda,\sigma}(\psi(-\lambda^2/4))=\phi_{\lambda}(-\lambda^2/4)$.

Thus, we must choose a labeling of the critical values of $g$, so that $v_1$ is a visible critical value of $g$ and we normalize $\phi_{\lambda,\sigma}$ by $\phi_{\lambda,\sigma}(v_1)=\lambda$. 
There are now two cases:

\paragraph{There is one critical value of $g$ in the $p/q$-star $\Sigma_{\lambda,\sigma}$.}
In other words, $g$ has one visible critical value, $k=1$ and $v_2\notin \Sigma_{\lambda,\sigma}$. The map 
\[\psi:=\phi_{\ls}^{-1}\circ\phi_\lambda: \Sigma_{\lambda}\to \Sigma_{\ls},\]
is a conformal isomorphism, with $\psi(0)=z_0$, $\psi(\frac{-\lambda^2}{4})=v_1$ and
\begin{equation}\label{eqn:etafuncteqn}
g\circ\psi = \psi\circ\mathrm{P}_{\lambda}.
\end{equation}
Let $l\geq 1$ be minimal such that $g^{l}(v_2)\in \Sigma_{\lambda,\sigma}$ and set $U=\mathrm{P}_\lambda^{-l}(\Sigma_\lambda)\subset A_{\lambda}$ and $V=g^{-l}(\Sigma_{\ls}) \subset A_{\lambda,\sigma}$, then $v_2\in V$ and $c_2\notin V$. The conjugacy $\psi$ can be extended by means of the functional equation (\ref{eqn:etafuncteqn}) to a conformal conjugacy, $\psi:U\to V$, which coincides with the definition of $\psi$ on $\Sigma_{\lambda}$. Then $\psi$ is a dynamical marker map of $g$ and $x:=\psi^{-1}(v_2)\in A_\lambda\setminus \Sigma_\lambda$ is a dynamical marking of $g$, hence of the class ${\bfnf}_{\lambda,\sigma}$ and parameter $\sigma$.

\paragraph{Both critical values of $g$ are in the $p/q$-star $\Sigma_{\lambda,\sigma}$,}
i.e. $v_2\in \Sigma_{\lambda,\sigma}$ so there are two visible critical values.

Let $U=\phi_\lambda^{-1}(\phi_{\lambda,\sigma}(\Sigma_{\lambda,\sigma}))\cap \Sigma_\lambda$ and $V=\Sigma_{\ls}$. Then the conformal isomorphism  
\[\psi:=\phi_{\ls}^{-1}\circ\phi_\lambda: U\to V,\]
with $\psi(0)=z_0$ and $\psi(\frac{-\lambda^2}{4})=v_1$ obeys the functional equation ($\ref{eqn:etafuncteqn}$). Thus, it is a dynamical marker map of $g$ with $x:=\psi^{-1}(v_2)$ as a dynamical marking of $g$, and of the class ${\bfnf}_{\lambda,\sigma}$.

There are now two cases.
If the critical values are on distinct cycles of branches  (i.e. $v_2\in \Sigma_{\lambda,\sigma}\setminus \tau_1$ and $k=2$), then  $x\in \Sigma_{\lambda}\setminus \tau_\lambda$ and 
\[U=\Sigma_{\lambda}\setminus \bigcup_{j=0}^q\phi_\lambda^{-1}\left(\wht_2^j|_{t\leq \frac{-(q-\kappa(j))}{q}}\right),\]
where $\kappa(j)$ is as defined in Equation (\ref{eqn_nj}). See an example of this case in Figure \ref{fig_dynmark}.

If, on the other hand, the critical values are on the same cycle of branches, then $k=1$ and $v_2\in \tau_1$, more precisely, $v_2\in \tau_1^j|_{t<\frac{s(j)}{q}}$,
for some $j\in\Z_q$ (see Equation \ref{eqn_2on1twig} and the discussion preceding it). This means that $x\in \tau_\lambda^j|_{t<\frac{s(j)}{q}}$, in particular $x\in \Sigma^*_\lambda\setminus \{-\frac{\lambda^2}{4}\}$ and 
\[U=\Sigma_{\lambda}\setminus \bigcup_{j=0}^q\phi_\lambda^{-1}\left(\phi_{\ls}(c_2) \cdot \wht_1^j|_{t\leq \frac{-(q-\kappa(j))}{q}}\right),\]
where $\kappa(j)$ and $s(j)$ are as defined in Equations (\ref{eqn_nj}) and (\ref{eqn_lj}). 

Note that $\bigcup_{j=0}^q\phi_\lambda^{-1}\left(\wht_2^j|_{t\leq \frac{-(q-\kappa(j))}{q}}\right)$ (or $\bigcup_{j=0}^q\phi_\lambda^{-1}\left(\phi_{\ls}(c_2) \cdot \wht_1^j|_{t\leq \frac{-(q-\kappa(j))}{q}}\right)$ respectively) is just a specific notation for $\upsilon_{\{\mathrm{P}^{-i}_\lambda(x)\}_{i=1}^q}$, the cut-off cycle of leaves (or branches) referred to for the definition of $U_\lambda(x)$, using the parametrization from Equation (\ref{eqn:tautilde}).
\\

This means that for any $\lambda\in \D_{-\omega}$, $\sigma \in \mathcal{R}^\lambda$, $g\in \Gamma_{\lambda,\sigma}$ there exists an $x\in A_\lambda\setminus (\tau^*_\lambda\cup \{-\lambda^2/4\})$ such that $g$ is dynamically marked by $x$ rel $\mathrm{P}_\lambda$.

If there is only one visible critical value of $g$ (i.e. $v_2 \notin \Sigma_{\lambda,\sigma}$ and $x=\psi^{-1}(v_2)\notin \Sigma_{\lambda}$), then there is no choice of critical value for the prescribed normalization of $\phi_{\lambda,\sigma}$, so $\psi$ and $x$ are uniquely determined by $g$. If, on the other hand, there are two visible critical values of $g$ (i.e. $v_2\in\Sigma_{\lambda,\sigma}$ and  
 $x=\psi^{-1}(v_2)\in \Sigma^*_\lambda \subset \Sigma_{\lambda}$), then the choice of $v_1$ for the normalization of $\phi_{\lambda,\sigma}$ determines the marking $x:=\psi^{-1}(v_2)$ of $g$. Changing the choice of critical value for the normalization of $\phi_{\ls}$, corresponds to changing the normalization by post-composing $\phi_{\ls}$ with $z\mapsto z\frac{\lambda}{\phi_{\lambda}(x)}$. With this new normalization $g$ is then dynamically marked by $I_\lambda(x)$.

Moreover, if $I_\lambda(x)\neq x$ then $\psi$ is uniquely determined by $g$ and a marking $x$ of $g$. Lastly, if $I_\lambda(x)= x$, equivalently $\phi_\lambda(x)=-\lambda$, then $g$ has an automorphism ($\sigma$ is in the symmetry locus in ${\bf rat}_2$) and $\psi$ is only unique up to an automorphism of $g$, i.e. a M\"obius conjugacy exchanging $v_1$ and $v_2$. 
\end{proof}

\begin{figure}
\begin{center}
     \includegraphics[width=13cm]{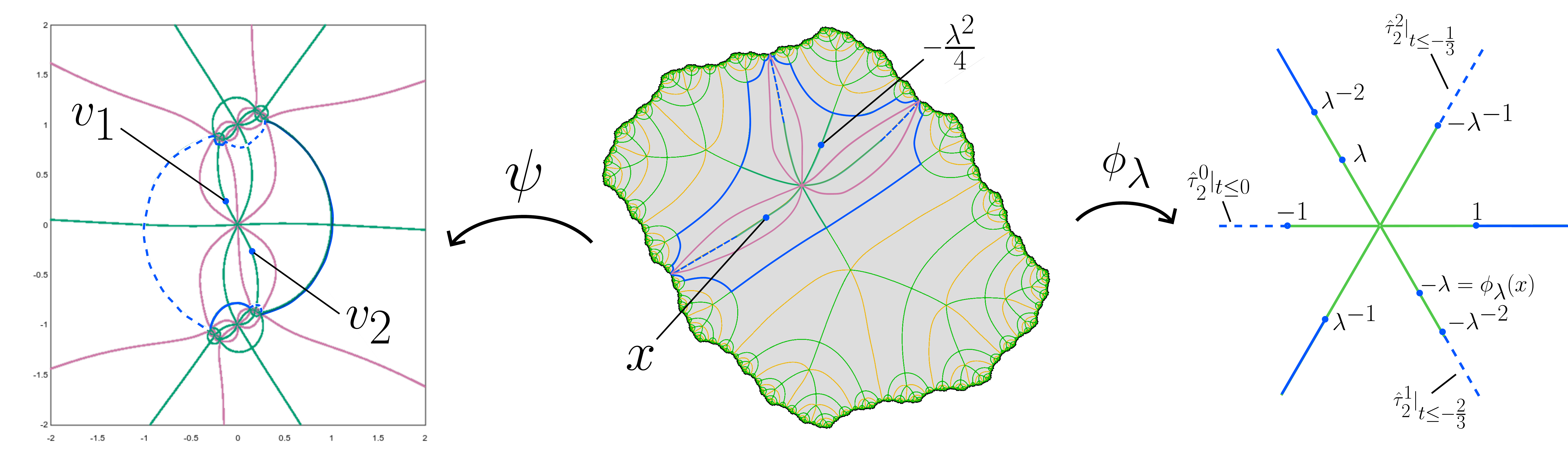}
\caption{Illustrates the dynamical marking $(x,\psi)$ for $G_{\lambda,0}$ with $\lambda=\frac{1}{2}\omega_{1/3}$. This map has $k=2$, that is, its star has two cycles of wires, and they both land on $q$-cycles. The domain $U$ and range $V$ of $\psi$ are bounded by blue curves. The left picture shows $G_{\lambda,0}$, the middle $\mathrm{P}_{\lambda}$ and the right picture illustrates how to construct $U$ from the co-domain of the linearizing coordinate. 
} 
\label{fig_dynmark}
\end{center}
\end{figure}

For the remainder of the discussion, $\psi: U\to V$ denotes a dynamical marker map for $g=g_{\ls}$, with domain and range constructed as described above. If there is need to emphasize that $\psi$ is a dynamical marker map for $g_{\ls}$, we will use indices $\psi_{\ls}$, $U_{\ls}$ and $V_{\ls}$,  otherwise we will suppress this dependence in the notation.

\subsection{A model for $\mathcal{R}^{\lambda}$ recording dynamical marking}\label{chap:gkmodel}
Each $\mathcal{R}^\lambda=\C\setminus M^\lambda$ is isomorphic to $\C\setminus \overline{\D}$.
We introduce a specific model for $\mathcal{R}^{\lambda}$, with $\lambda\in \D_{-\omega}$, which records the dynamical marking of $\sigma\in \mathcal{R}^\lambda$ relative to $\mathrm{P}_{\lambda}$. It is inspired by a model introduced by Goldberg and Keen in \cite{gold90}. Their formulation is quite different from ours, since they worked with marked critical points, so that they study critically marked slices $\per(\lambda)^{cm}$, which are twofold branched coverings of $\per(\lambda)$, parametrized by the normal form $G_{\lambda,a}$. (See also \cite{mil93} for a discussion of the different versions of moduli space and slices herein.)

The model space will be based on the $p/q$-star $\Sigma_\lambda$ for the basin $A_\lambda$. The involution $I_\lambda: \Sigma^*_\lambda\to \Sigma^*_\lambda$ induces an equivalence relation on $\Sigma^*_\lambda$.

\begin{definition}\label{def:equivrelattpq}
  Two points $z_1,z_2 \in \Sigma^*_{\lambda}$ are called equivalent
  modulo $\lambda$, written $z_1 \sim_{\lambda} z_2$, if $z_2=I_\lambda(z_1)$.
\end{definition}

The model space, denoted $\Delta^\lambda$, is defined as the quotient 
\[\Delta^{\lambda}:=(A_\lambda\setminus \tau^*_\lambda)/_{\sim_{\lambda} \text{ on }  \Sigma^*_\lambda}.\] 
Clearly $\Delta^\lambda$ is a Riemann surface, isomorphic to the unit disk. Let $\pi_\lambda$ denote the projection map $\pi_\lambda: A_\lambda\setminus \tau^*_\lambda \to \Delta^\lambda$ and let $\Delta^{\lambda *}={\Delta^\lambda}\setminus \pi_{\lambda}(-\lambda^2/4)$. Then $\Delta^{\lambda *} \cong \D^*$.

It follows from Proposition \ref{prop:dynmarkinglambdabylambda} and Lemma \ref{lemma:pullback} that an injective map 
\[\chi^{\lambda}:\mathcal{R}^{\lambda}\to \Delta^{\lambda *}\] can be uniquely defined by letting $\chi^{\lambda}(\sigma)=\pi_\lambda(x)$, where $x\in A_\lambda\setminus (\tau^*_\lambda\cup \{-\frac{\lambda^2}{4}\})
$ is a dynamical marking of $\sigma\in \mathcal{R}^{\lambda}$ relative to $\mathrm{P}_\lambda$. Note that in cases where there are two dynamical markings, $x$ and $I_\lambda(x)$, they get identified by $\pi_\lambda$, whence the map $\chi^{\lambda}$ is well-defined.

The construction of dynamical marker maps $\psi$ via linearizing coordinates shows that $\chi^\lambda$ is holomorphic, and results from \cite{gold90} imply that $\chi^\lambda$ is surjective, whence an isomorphism. This isomorphism is canonical, in the sense that it is \emph{the} isomorphism on $\mathcal{R}^\lambda$, which records dynamical markings rel $\mathrm{P}_{\lambda}$. This discussion and the results of \cite[Lemma 3.1 and Thm. 3.3]{gold90} (in our setting) are collected in the following proposition.

\begin{prop}
\label{th:gk}
  The map $\chi^{\lambda}:\mathcal{R}^{\lambda}\to \Delta^{\lambda *}$
  is an isomorphism, which records dynamical markings, that is $x\in \pi_\lambda^{-1}(\chi^{\lambda}(\sigma))$ is a dynamical marking of $\sigma$ rel $\mathrm{P}_{\lambda}$. 
  Moreover, the point $\pi_{\lambda}(-\lambda^2/4)$ corresponds to $\infty\in\CC$, in the sense that $(\lambda,\sigma)\to(\lambda,\infty)$ if and only if $\chi^{\lambda}(\sigma)\to \pi_{\lambda}(-\lambda^2/4)\in \Delta^\lambda$.
\end{prop}

Since dynamical marker maps are composed of linearizing coordinates, that also vary analytically with the parameter $\lambda$, and $\Delta^\lambda$ is an analytically varying family of Riemann surfaces, the map $\chi^{\lambda}$ is analytic in $\lambda$.

Note that even though the construction is based on the
$p/q$-star $\Sigma_\lambda$ for the basin $A_\lambda$, the resulting model space $\Delta^{\lambda *}$ and isomorphism $\chi^{\lambda}$ do not depend on $p/q$. In fact, $\Delta^{\lambda}$ is canonically isomorphic to the quotient obtained by identifying only points on $\partial \mathcal{U}_\lambda$, i.e.  
$(A_\lambda\setminus \mathcal{U}_\lambda)/_{\sim_{\lambda} \text{ on } \partial \mathcal{U}_\lambda}$, where the isomorphism  
is induced by the injection of $A_\lambda\setminus \mathcal{U}_\lambda$ into $A_\lambda\setminus \tau^*_\lambda. $
And, since the dynamical marking $x$ does not depend on $p/q$ (only the domain of the marker map $\psi$ does), the map $\chi^{\lambda}$ can be given without reference to $p/q$, as long as a dynamical marking $x$ or $I_\lambda(x)$ in $A_\lambda\setminus \mathcal{U}_\lambda$ is chosen for its definition.

Because of this, and in accordance with our general convention for notation, we have suppressed the dependence on $p/q$ in the notation.

We now proceed to prove the needed properties of stars in infinitely connected quadratic basins, resulting in Proposition \ref{prop:eigenvalueconv} and its Corollaries \ref{cor:fatcycles} and \ref{cor:extracycles}.

\subsection{Characterization of landing points of wires}\label{sect:landingwire}
As usual, let $\lambda\in\D_{-\omega}$, $\sigma\in \mathcal{R}^\lambda$ and $g=g_{\ls}\in{\bfnf}_{\lambda,\sigma}$. Suppose that $x\in 
A_{\lambda}\setminus (\tau^*_\lambda\cup \{-\frac{\lambda^2}{4}\})
$ is a dynamical marking rel $\mathrm{P}_{\lambda}$ of $g$, with corresponding dynamical marker map $\psi:U\to V$, and $v_1$, $v_2$ a corresponding dynamical labelling of the critical values, i.e. $v_1$ is visible, $v_1=\psi(-\lambda^2/4)$ and $v_2=\psi(x)$.

A $q$-cycle of wires $\gamma$ of $g$ separates $V$, the range of the dynamical marker map $\psi$, into $q$ simply connected components. We will say that a cycle of wires $\gamma$ is \emph{non-separating} if $v_1$ and $v_2$ are in the same component of $V\setminus \gamma$ and \emph{separating} otherwise. There is always at most one non-separating $q$-cycle of wires.

\begin{lemma}\label{lemma:qwirecycle}
For $\lambda\in\D_{-\omega}$ and $\sigma\in \mathcal{R}^\lambda$ let $g=g_{\ls}\in{\bfnf}_{\lambda,\sigma}$.
A cycle of wires $\gamma$ of $g$ lands on a fixed point if and only if the cycle $\gamma$ is non-separating. 
\end{lemma}
The proof will use the Denjoy-Wolff Theorem, which we recall here: 
\begin{denjoy*}
Let $f:\D\to\D$ be a holomorphic map, not conjugate to $z\mapsto \e^{i\theta}z$, then there exists $p\in \overline\D$ such that $f^n(z)\to p$, as $n\to\infty$, for all $z\in \D$. In fact, the convergence is uniform on compact subsets of $\D$.
\end{denjoy*}

\begin{proof}[Proof of Lemma \ref{lemma:qwirecycle}]
By Lemma \ref{lemma:wiresland} any $q$-cycle of wires of $g$ lands, either on a fixed point or a $q$-cycle of $g$. 

First assume that the $q$-cycle of wires $\gamma$ of $g$ lands on a fixed point $\zeta$. Then $\CC\setminus\overline{\gamma}$ consists of $q$ topological disks, denoted $D^0, ...,  D^{q-1}$ and  named counterclockwise according to their cyclical order around $z_0$, so that the critical point $c_1$ is in $D^0$.
The critical value $v_1=g(c_1)$ is in $D^p$, since the wires are $p/q$ rotated by $g$. The pre-image $g^{-1}(\gamma)$ has two components, $\gamma$ itself and $\gamma'$, where $\overline{\gamma'}\cap \overline{\gamma}=\emptyset$ and $\gamma'$ lands on the pre-image $\zeta'\neq\zeta$ of $\zeta$. The leaves in $\gamma'$ emanate from $\zeta'$ and end on $z_0'$, the other pre-image of $z_0$.

Hence, the set $\CC\setminus g^{-1}(\overline{\gamma})=\CC\setminus (\overline{\gamma}\cup \overline{\gamma'})$ consists of $2(q-1)$ topological disks and one doubly connected component $A(\gamma, \gamma')\subset D^0$, which contains $c_1$. Since $g: A(\gamma, \gamma')\to D^p$ is a proper, branched covering of degree 2, it follows from the Riemann-Hurwitz Formula, that the number of branch points in $A(\gamma, \gamma')$ is 2, whence $v_2$ is also in $D^p$. But then $\gamma$ is non-separating.

Now assume that the cycle of wires $\gamma$ is non-separating. Suppose that $\psi:U\to V$ is a dynamical marker map for $g$, and $v_1$, $v_2$ a corresponding dynamical labelling of the critical values. We make an alteration of the co-domain $V$, by removing the ends of the strips of the star $\Sigma_{\ls}$, and all their pre-images, in order to construct a subset which is compactly contained in the basin $A_{\ls}$.
Let $l\geq 0$ be minimal so that $g^l(v_2)\in \Sigma_{\ls}$, as in the construction of the domain and range of $\psi$, and choose
$R>\max\{|\lambda^l\cdot\phi_{\ls}(v_2)|,|\lambda|\}$,
then the subset
\[V(R):=g^{-l}\left(\Sigma_{\ls}\cap\phi_{\ls}^{-1}(\D_R)\right)\subset V\]
is connected, simply connected, forward invariant under $g$ and $v_1,v_2\in V(R)$. The closure $\overline{V(R)}$ is forward invariant under $g$ as well, and by choosing $R<\max\{|\lambda^{l-1}\cdot\phi_{\ls}(v_2)|,1|\}$ if necessary, so that there is at most one critical point in $\overline {V(R)}$, we make sure that $\overline {V(R)}$ is also simply connected.

Hence the complement $W=\CC\setminus \overline{V(R)}$ is open, simply connected and forward invariant under branches of the inverse $g^{-1}$.
Since $v_1,v_2 \in V(R)$, it follows from the Riemann-Hurwitz Formula that the pre-image $g^{-1}(\overline{V(R)})$ is doubly connected, whence the complement $\CC\setminus g^{-1}(\overline{V(R)})=g^{-1}(W)\subset W$ consists of two open, simply connected components, $W_1,W_2\subset W$, each mapped isomorphically to $W$ by $g$. Note that the ends and landing points of the cycle of wires $\gamma$ must be contained in 
$g^{-1}(W)$, by the compactness of $V(R)$ relative to $A_{\ls}$. Moreover:
\paragraph{Claim.}  All landing points of $\gamma$ are in the same component of $g^{-1}(W)$, this component will be denoted $W_1$.
\begin{proof}[Proof of claim]
The cycle of wires $\gamma$ is non-separating, so $v_1$ and $v_2$ can be connected by a curve $\ell$ in $V(R)$, which does not cross $\gamma$. For simplicity, consider a continuous curve that does not self-intersect, then the preimage $g^{-1}(\ell)$ is a simple closed curve in $g^{-1}(V(R))$ containing both critical points $c_1$ and $c_2$, and separating $\CC$ into two components, each compactly containing a component of $g^{-1}(W)$. Since the curve $\ell$ does not cross $\gamma$, neither does its pre-image, whence all landing points of $\gamma$ are in the same component of $g^{-1}(W)$.
 \end{proof}
 Let $f_1$ denote the branch of the inverse of $g$ from $W$ to $W_1$, i.e. $f_1: W \overset{\cong}\to W_1\subset W$. Since all landing points of $\gamma$ are in $W_1$, they constitute a periodic cycle for $f_1$. Let $\Phi:W\to\D$ denote a Riemann map for $W$, and consider the conjugate map $f_\Phi=\Phi\circ f_1\circ\Phi^{-1}:\D\to\D$. Since $W_1$ is a proper subset of $W$, it follows that $f_\Phi(\D)\subsetneq \D$, whence $f_\Phi$ is not conjugate to a rotation. But then $f_1$ can have no periodic cycles in $W$ of period $n>1$, since that would correspond to periodic cycles for $f_\Phi$ in $\D$ of period $n>1$, in contradiction to the Denjoy-Wolff Theorem. So the wires in $\gamma$ land together on a fixed point $\zeta$ of $g$. 
\end{proof}
\begin{lemma}\label{lemma:distinctcyclesdistinctlanding}
For $\lambda\in\D_{-\omega}$ and $\sigma\in \mathcal{R}^\lambda$ let $g=g_{\ls}\in{\bfnf}_{\lambda,\sigma}$.
Two distinct cycles of wires $\gamma\neq \gamma'$ of $g$ land on distinct periodic cycles of $g$.
\end{lemma}
\begin{proof}
Recall that $\tau_i$ denotes the twig of $\Sigma_{\ls}$ associated to the critical value $v_i$, when it is visible. When there are two distinct cycles of wires ($k=2$), then there are two visible critical values and two twigs with $\tau_1 \cap \tau_2=\{z_0\}$.
There is always at most one non-separating cycle of wires, whence from Lemma \ref{lemma:qwirecycle} at most one cycle can land at a fixed point. Therefore, the only case to check is when both cycles of wires land at genuine $q$-cycles of $g$. If $\gamma$ and $\gamma'$ land together on the q-cycle $\langle z\rangle$ of $g$, then the wires must land in pairs. Consider the union of branches $\overline{\tau_1^p}\cup \overline{\tau_2^p}$, which contains $v_1$, $v_2$ and $z_0$. The pre-image $g^{-1}(\overline{\tau_1^p}\cup \overline{\tau_2^p})$ contains a simple closed curve, containing the fixed point $z_0$, its other pre-image, and branches $\tau_1^0$ and $\tau_2^0$. The complement in $\CC$ of this simple closed curve contains $\overline{\gamma}\setminus \{z_0\}$ and $\overline{\gamma'}\setminus \{z_0\}$. By construction there is always an odd number of wires between any branch from $\tau_1$ and any branch from $\tau_2$ (which is immediately seen for the lines $\wtt_{i}^j$ and $\wtga^j$ in the universal covering), whence there is an odd number of wires in each component of this complement. But then the wires can't land in pairs, whence the two cycles of wires $\gamma$ and $\gamma'$ of $g$ land on distinct periodic cycles of $g$. 
\end{proof}

\subsection{Modulus estimates and subhorocyclic convergence}\label{sect:modulus}
The distance between adjacent lines in $\wtt_{1}/L$ is $\frac{2\pi\sin\theta}{q|L|}$, see Figure \ref{fig_visheight}. For $k=2$ we consider also the distance between $\wtt_{1}/L$ and $\wtt_{2}/L$.
This number depends on $p/q$, but neither on the representative $g\in {\bfnf}_{\lambda,\sigma}$, nor on the normalization of $\phi_{\ls}$ or the choice of labeling of critical values. We will also call it the \emph{visible height} of $v_2$ rel $\mathrm{P}$ (or of the dynamical marking $x$) for $g\in {\bfnf}_{\lambda,\sigma}$ (or simply for $(\lambda,\sigma)$) and denote it by $H=H_{p/q}(\ls)$, see Figure \ref{fig_visheight}. For $k=1$ we set $H=0$. Notice that $H=d(\wtt_{1}/L,\frac{\log\circ\phi_{\ls}(v_2)}{L})$.
\begin{figure}
\begin{center}
     \includegraphics[width=12cm]{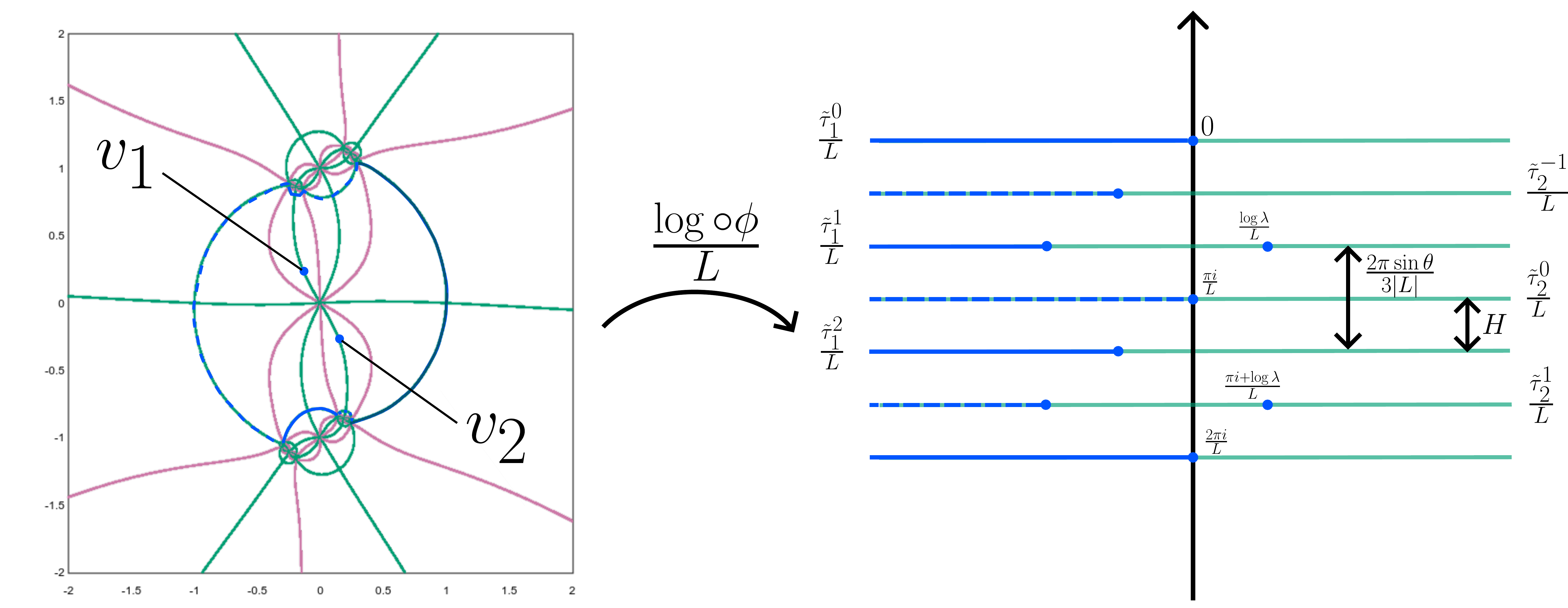}
\caption{Illustrates the visible height $H$ for $G_{\lambda,0}$ with $\lambda=\frac{1}{2}\omega_{1/3}$. This map has $k=2$, that is, its star has two cycles of wires, both landing on $q$-cycles and both wide ($H=\frac{\pi\sin\theta}{3|L|}$).} 
\label{fig_visheight}
\end{center}
\end{figure}

\paragraph{\emph{Associated} modulus for a cycle of wires $\gamma$.}
A wire $\gamma^j$ in a cycle of wires $\gamma$ belongs to the strip $U^j$ of the star. The quotient $U^j/g_{\ls}^q$ is conformally isomorphic to the cylinder $C=\widetilde U^j/(z\mapsto z+L)$, whence it has modulus $\operatorname{mod}(U^j/g_{\ls}^q)=\operatorname{mod}(C)$. Strips in the same cycle, are mapped conformally to each other, so their quotients have the same moduli. We will say that a cycle of wires $\gamma$ has \emph{associated modulus} $m$ if $m=\operatorname{mod}(U^j/g_{\ls}^q)$. 

\begin{itemize}
\item When $k=1$ there is one cycle of wires, of associated modulus 
\[m=m(\lambda)=\frac{2\pi \sin\theta}{q|L|}=\frac{\pi}{q^2 r_\lambda},\]
where $\theta$ is the angle between $L$ and $2\pi i$. See also Equations (\ref{eqn_mlambdaattrpar}) and  (\ref{eqn:rlambda}).

\item When $k=2$ there are two cycles of wires, $\gamma_1$ and $\gamma_2$ of associated moduli $m_1$ and $m_2$, where $m_1+m_2 = m(\lambda)$. We will use the convention $m_1\geq m_2$. By the definition of visible height 
\[m_2=H\leq\frac{\pi\sin\theta}{q|L|},\]
and thus
\[m_1=m(\lambda)-H=\frac{2\pi \sin\theta}{q|L|}-H\geq\frac{\pi\sin\theta}{q|L|}.\]
\end{itemize}

\paragraph{A \emph{wide} $q$-cycle of wires.} A $q$-cycle of wires is called \emph{wide} if it has associated modulus: $m\geq\frac{\pi\sin\theta}{q|L|}$. Note that $g_{\ls}$  always has one wide $q$-cycle of wires, and two if and only if $k=2$ and $H=\frac{\pi\sin\theta}{q|L|}$, see Figure \ref{fig_visheight}.

Recall that $\gamma_\lambda$ denotes the $q$-cycle of wires, and $\tau_\lambda$ denotes the twig, of the $p/q$-star $\Sigma_\lambda$ for $\mathrm{P}_{\lambda}$ and let $U_\lambda^j$ denote the jth strip of $\Sigma_\lambda$. Also recall that $S^p_\lambda$ denotes the critical value sector of $A_\lambda$: the connected component of $A_\lambda\setminus \gamma_\lambda$ that contains $-\lambda^2/4$, see Figure \ref{critvaluesector}.
\begin{lemma}\label{lemma:nonsepwirecycle}
For $\lambda\in\D_{-\omega}$ and $\sigma\in \mathcal{R}^\lambda$ let $g=g_{\ls}\in{\bfnf}_{\lambda,\sigma}$. Suppose $x\in 
A_{\lambda}\setminus (\tau^*_\lambda\cup \{-\frac{\lambda^2}{4}\})
$ is a dynamical marking of $g$ rel $\mathrm{P}_{\lambda}$ and let $v_2=\psi(x)$ denote the corresponding critical value of $g$. Then,
\begin{itemize}
\item there exists a non-separating $q$-cycle of wires $\gamma$ of $g$ if and only if \\ $x\in S^p_\lambda\cup U^p_\lambda\cup U_\lambda^{p-1}$, and 
\item there exists a wide non-separating $q$-cycle of wires $\gamma$ of $g$ if and only if $x\in \overline{S^p_\lambda}$.
\end{itemize}
\end{lemma}

\begin{proof}
Recall that $\tau_i$ is the twig of $\Sigma_{\ls}$ associated to the critical value $v_i$, when it is visible. It follows from the definition of $\psi$ that $\psi^{-1}(\tau_1)\subseteq \tau_\lambda$, with equality when $x\notin \tau_\lambda$.

If $v_2\notin\Sigma_{\lambda,\sigma}$ or $v_2\in\tau_1$ then $k=1$, i.e. there is one cycle of wires $\gamma$ of $g$, which is then automatically wide. In this case $\psi(\gamma_{\lambda})=\gamma$ and $\gamma$ is non-separating if and only if $x\in S^p_\lambda$. 

If $v_2\in\Sigma_{\lambda,\sigma}\setminus \tau_1$, then $k=2$ and there exists a non-separating cycle of wires if and only if branches $\tau_{2}^p$ and $\tau_{1}^p$ are adjacent to each other, with respect to the cyclic ordering. This is the case if and only if $x\in U^p_\lambda\cup U_\lambda^{p-1}$, since $\psi^{-1}(\tau_1)=\tau_\lambda$. Further, there exists a wide non-separating cycle of wires if and only if $d(\wtt_{1}^p/L, \wtt_{2}^p/L)=H$, which in turn is equivalent to $x\in \overline{S^p_\lambda}$.
\end{proof}

If a cycle of wires $\gamma$ of $g$ is non-separating let $\zeta=g(\zeta)$ denote the landing fixed point and $\mu$ its eigenvalue.
If $\gamma$ is separating let $\langle z\rangle$ denote the landing $q$-cycle of $g$ and $\rho$ its eigenvalue. Recall that $r_\lambda=r_{\lambda,p/q}$ is the radius of the circle through $\log\lambda$, and tangent to the imaginary axis at $2\pi ip/q$ (cf. Equation \ref{eqn:rlambda}). Similarly, let $r_\mu=r_{\mu,-p/q}$ denote the radius of the circle through $\log\mu$, and tangent to the imaginary axis at $-2\pi ip/q$, and let $r_\rho=r_{\rho,0}$ denote the radius of the circle through $\log\rho$, and tangent to the imaginary axis at 0, for specified choices of $\log\mu$ and $\log\rho$ respectively.

\begin{lemma}\label{lemma:wiremodulus}
For $\lambda\in\D_{-\omega}$ and $\sigma\in \C$ let $g=g_{\lambda,\sigma}\in{\bfnf}_{\lambda,\sigma}$. Let $\gamma=\gamma_{\ls}$ be a cycle of wires of $g$ of associated modulus $m=m(\gamma)$. 

\begin{enumerate}
\item If the landing cycle of $\gamma$ is a repelling fixed point $\zeta$ of eigenvalue $\mu$, then there is a choice of $\log\mu$ so
\[m\leq\frac{\pi}{q^2 r_\mu}.\]
\item If the landing cycle of $\gamma$ is a repelling $q$-cycle $\langle z\rangle$ of eigenvalue $\rho$, then there is a choice of $\log\rho$ so
\[m\leq\frac{\pi}{r_\rho}.\]
 
\end{enumerate}
\end{lemma}
Note that Lemma \ref{lemma:wiremodulus} also holds in the case of simply connected basins ($\sigma\in M^\lambda$). 
The proof is an application of Bers inequality. We recall it here, for the benefit of the reader, inspired by the presentation in \cite{mil99}.

Consider a flat torus $\T=\C/\Lambda$, where $\Lambda\subset\C$ is a two-dimensional lattice, equipped with the induced Euclidean metric from $\C$. Let $A\subset \T$ be an embedded annulus. The central curve of $A$ lifts by the universal covering map $\C\to\T$, to a curve segment which joins a point $z_0\in\C$ to a translate $z_0+w$ by the lattice element $w\in\Lambda$. The lattice element $w$ is called the \emph{winding number} of $A$ in $\T$, and $A\subset \T$ is called an \emph{essentially embedded annulus} if $w\neq 0$.

\begin{bers*}
If the flat torus $\T=\C/\Lambda$ contains several pairwise disjoint, essentially embedded annuli $A_i$, then the annuli have the same winding number $w\in\Lambda$, and
\[\sum\operatorname{mod}(A_i)\leq \frac{\operatorname{area(\T)}}{|w|^2}.\] 
\end{bers*}

\begin{proof}[Proof of Lemma \ref{lemma:wiremodulus}]
For both cases, we consider quotient tori for the dynamics of $g^q$, at the relevant repelling (fixed or)  periodic points. Let $\gamma^j$ denote a wire in the cycle of wires $\gamma$ and let $U^j$ denote the strip containing $\gamma^j$. Since wires and strips of the star $\Sigma_{\ls}$ are forward invariant under $g^q$, wires $\gamma^j$ descend to closed curves and strips $U^j$ descend to essentially embedded annuli in the quotient tori. 

For case 1. consider the quotient torus $T_{\mu^q}$, for the dynamics of $g^q$, at the landing fixed point $\zeta$ with eigenvalue $\mu$ for $g$. The torus $T_{\mu^q}$ is conformally isomorphic to $\C/\Lambda$, where $\Lambda$ is the lattice generated by $2\pi i$ and $\log\mu^q$, for some choice of $\log\mu^q$.

Since wires $\gamma^j$ in $\gamma$ are $p/q$-rotated around $z_0$, they are $-p/q$-rotated around $\zeta$. Let $\phi_{rep}$ denote a linearizing coordinate for $g$ at the repelling fixed point $\zeta$, and let $\check{\gamma}^j$ denote a lift of $\phi_{rep}(\gamma^j)$ to the exponential function, where lifts are ordered horizontally so $\check{\gamma}^j$ is below $\check{\gamma}^i$ for $j<i$. The complete fiber of $\phi_{rep}(\gamma^j)$ (and the lift of $\gamma^j/g^q$ to the universal cover of the torus $T_{\mu^q}$) is $\{\check{\gamma}^{j+nq}:=\check{\gamma}^{j}+n2\pi i, \, n\in\Z\}$. Choose $\log\mu$ so that $\check{\gamma}^j+\log\mu=\check{\gamma}^{j-p}$, then $M=q\log\mu+p2\pi i$ is the winding number of the annuli $A_j=U^j/g^q$ in $T_{\mu^q}$ and our choice of logarithm of $\mu^q$. Moreover, since the strips $U^j$ are all pairwise disjoint, so are the annuli $A_j$. 

Thus Bers inequality yields: 
\[qm=\sum_{j\in \Z_{q}} \operatorname{mod}(U^{j}/g^q)\leq \frac{\operatorname{area(T_{\mu^q})}}{|M|^2}.\] 
Rewriting this in terms of the horocyclic radius $r_\mu=\frac{|M|}{2q \sin(\theta_M)}$, where $\theta_M$ is the angle between $M$ and $2\pi i$, proves case 1.:
\[qm=\sum_{j\in \Z_{q}} \operatorname{mod}(U^{j}/g^q)\leq \frac{\operatorname{area(T)}}{|M|^2}=\frac{2\pi\sin(\theta_M)}{|M|}=\frac{\pi}{q\: r_\mu}.\]

For case 2. consider similarly the quotient torus $T_{\rho}$, for the dynamics of $g^q$, at one of the periodic points of $g$ in the $q$-cycle $\langle z \rangle$. The torus $T_{\rho}$ is conformally isomorphic to $\C/\Lambda$, where $\Lambda$ is the lattice generated by $2\pi i$ and a logarithm of $\rho$. We choose the logarithm $R=\log\rho=\Log\rho+n2 \pi i$, where $n\in\Z$ is chosen so that $R$ is the winding number of $U^j/g^q$ in $T_{\rho}$.

Thus Bers inequality yields
\[m=\operatorname{mod}(U^{j}/g^q)\leq \frac{\operatorname{area(T_{\rho})}}{|R|^2}=\frac{2\pi\sin(\theta_{R})}{|R|}=\frac{\pi}{r_{\rho}},\] 
where $\theta_R$ is the angle between $R$ and $2\pi i$. 
\end{proof}

For $\lambda\in\D_{-\omega}$ and $\sigma \in \mathcal{R}^{\lambda}$ any   $g=g_{\lambda,\sigma}\in{\bfnf}_{\lambda,\sigma}$ has an attracting fixed point with two critical values in its attracting basin. Hence by the Fatou-Shishikura inequality all other cycles are repelling (see for example \cite{mil99}). In particular, all landing cycles are repelling, whence Lemma \ref{lemma:wiremodulus} applies.

The following Proposition is an extension of Theorem B from \cite{pet99} to the case of non-simply connected basins.
\begin{prop}\label{prop:eigenvalueconv}
For $\lambda\in\D_{-\omega}$ and $\sigma \in \mathcal{R}^{\lambda}$ let  $g=g_{\lambda,\sigma}\in{\bfnf}_{\lambda,\sigma}$. Suppose $x\in 
A_{\lambda}\setminus (\tau^*_\lambda\cup \{-\frac{\lambda^2}{4}\})
$ is a dynamical marking of $g$ rel $\mathrm{P}_{\lambda}$ and $H$ is the visible height of $x$. 

\begin{enumerate}
\item If $x\in \overline{S^p_{\lambda}}$ then 
\[
r_{\mu}\leq k \, r_{\lambda},
\]
where $k$ is the number of cycles of wires.
More precisely for $k=2$
\[
\frac{\pi}{q^2 r_{\mu}}\geq \frac{\pi}{q^2 r_{\lambda}}-H.
\]

\item 
If $x\notin \overline{S^p_{\lambda}}$ then 
\[
r_{\rho}\leq k \, q^2 r_{\lambda},
\]
where $k$ is the number of cycles of wires.
More precisely for $k=2$
\[
\frac{\pi}{ r_{\rho}}\geq \frac{\pi}{q^2 r_{\lambda}}-H.
\]
\end{enumerate}
For $k=2$ ($x\in 
\Sigma_{\lambda}\setminus \tau_\lambda$) we further have:
\begin{enumerate}
\item[3.] If $x\in U^p_{\lambda}\cup U^{p-1}_{\lambda}$ then
\[r_{\mu}\leq \frac{\pi}{q^2H} \quad \text{and} \quad r_{\rho} \leq \frac{\pi}{H}.\]
\item[4.]
For $x\in \Sigma_{\lambda}\setminus U^p_{\lambda}\cup U^{p-1}_{\lambda}$ let $\rho$, $\rho'$ denote the eigenvalues  of the landing $q$-cycles $\langle z\rangle\neq \langle z' \rangle$, of the two (separating) cycles of wires of $g$. Then 
\[ r_{\rho}\leq \frac{\pi}{H} \quad \text{and} \quad r_{\rho'}\leq \frac{\pi}{H}.\]
\end{enumerate}
\end{prop}
Note that in cases 3. and 4. sharper inequalities hold for the eigenvalue of the landing cycle pertaining to the wide cycle of wires, these are contained in the situation $k=2$ of cases 1. and 2. Proposition \ref{prop:eigenvalueconv} has the following immediate corollaries.
\begin{cor}\label{cor:fatcycles}
For $\lambda_n\in\D_{-\omega}$ and $\sigma_n \in \mathcal{R}^{\lambda_n}$ let  $g_n=g_{\lambda_n,\sigma_n}\in{\bfnf}_{\lambda_n,\sigma_n}$. Suppose $x_n\in 
A_{\lambda_n}\setminus (\tau^*_{\lambda_n}\cup \{-\frac{\lambda_n^2}{4}\})
$ is a dynamical marking of $g_n$ rel $\mathrm{P}_{\lambda_n}$ and let $v_n=(v_2)_n=\psi_n(x_n)$ denote the corresponding critical value of $g_n$. Let $\lambda_n\to\opq$ subhorocyclicly. 
\begin{enumerate}
\item If $x_n\in \overline{S^p_{\lambda_n}}$ then there exist repelling fixed points $\zeta_n=g_n(\zeta_n)$ with eigenvalues $\mu_n\to \omega_{-p/q}$ subhorocyclicly.

\item 
If $x_n\notin \overline{S^p_{\lambda_n}}$ then there exist repelling $q$-cycles $\langle z\rangle_n$ of $g_n$ with eigenvalues $\rho_n\to 1$ subhorocyclicly. 
\end{enumerate}
\end{cor}

\begin{cor}\label{cor:extracycles}
For $\lambda_n\in\D_{-\omega}$ and $\sigma_n \in \mathcal{R}^{\lambda_n}$ let  $g_n=g_{\lambda_n,\sigma_n}\in{\bfnf}_{\lambda_n,\sigma_n}$. Suppose $x_n\in 
\Sigma_{\lambda_n}\setminus (\tau^*_{\lambda_n}\cup \{-\frac{\lambda_n^2}{4}\})
$ is a dynamical marking of $g_n$ rel $\mathrm{P}_{\lambda_n}$ and let $v_n=(v_2)_n=\psi_n(x_n)$ denote the corresponding critical value of $g_n$. Let $\lambda_n\to\opq$ subhorocyclicly and suppose $H_n\to\infty$. 
\begin{enumerate}
\item If $x_n\in U^p_{\lambda_n}\cup U^{p-1}_{\lambda_n}$ then
there exist repelling fixed points $\zeta_n=g_n(\zeta_n)$ with eigenvalue $\mu_n\to \omega_{-p/q}$ subhorocyclicly and repelling $q$-cycles $\langle z\rangle_n$ of $g_n$ with eigenvalue $\rho_n\to 1$ subhorocyclicly.

\item
If $x_n\in U^j_{\lambda_n}$, $j\neq p, p-1$, then there exist 
distinct repelling $q$-cycles $\langle z\rangle_n\neq \langle z' \rangle_n$ of $g_n$ with eigenvalues $\rho_n\to 1$, $\rho'_n\to 1$ subhorocyclicly. 

\end{enumerate}
\end{cor}

\begin{proof}[Proof of Proposition \ref{prop:eigenvalueconv}]
For a $g$ as in the proposition, there is always at least one wide cycle of wires. We let $m_1$ denote the associated modulus of such a cycle:
\[m_1=m(\lambda)-H=\frac{\pi}{q^2r_\lambda}-H \geq \frac{\pi}{k q^2r_\lambda},\]
where the visible height $H=0$ if $k=1$ and $H\leq \frac{\pi}{2 q^2 r_{\lambda}}$ by definition.

Suppose $x\in \overline{S^p_{\lambda}}$. Then by Lemma \ref{lemma:nonsepwirecycle} there exists a wide non-separating cycle of wires.
 By Lemma \ref{lemma:qwirecycle} this cycle of wires lands on a fixed point $\zeta$ of $g$, whence by Lemma \ref{lemma:wiremodulus} the eigenvalue $\mu$ of $\zeta$ satisfies 
\[\frac{\pi}{k q^2r_\lambda} \leq \frac{\pi}{q^2r_\lambda}-H = m_1\leq\frac{\pi}{q^2 r_\mu},\]
which shows case 1.

Suppose $x\notin \overline{S^p_{\lambda}}$. Then by Lemma \ref{lemma:nonsepwirecycle} any wide cycle of wires is separating, and therefore lands on a $q$-cycle $\langle z\rangle$ of $g$ by Lemma \ref{lemma:qwirecycle}. 
By Lemma \ref{lemma:wiremodulus} the eigenvalue $\rho$ of $\langle z\rangle$ satisfies 
\[\frac{\pi}{k q^2r_\lambda} \leq \frac{\pi}{q^2r_\lambda}-H = m_1\leq \frac{\pi}{r_\rho},\]
which shows case 2.

Suppose $k=2$ and $x\in U^p_{\lambda}\cup U^{p-1}_{\lambda}$, then there are two cycles of wires. By Lemma \ref{lemma:nonsepwirecycle} there exists a non-separating cycle of wires, which lands on a fixed point $\zeta$ of $g$ by Lemma  \ref{lemma:qwirecycle}. The other cycle of wires is necessarily separating, and therefore lands on a $q$-cycle $\langle z\rangle$ of $g$ by Lemma \ref{lemma:qwirecycle}. This situation is illustrated in the bottom left picture in Figure \ref{fig_stars}. The associated modulus of either cycle is greater than or equal to $H$, whence by Lemma \ref{lemma:wiremodulus}
\[ H\leq\frac{\pi}{q^2 r_\mu} \quad \text{and} \quad H \leq \frac{\pi}{r_\rho}\]
which shows case 3. 

Suppose $k=2$ and $x\in \Sigma_{\lambda}\setminus U^p_{\lambda}\cup U^{p-1}_{\lambda}$, then there are two cycles of wires, both separating by Lemma \ref{lemma:nonsepwirecycle}, whence they land on distinct $q$-cycles of $g$ by Lemmas \ref{lemma:qwirecycle} and \ref{lemma:distinctcyclesdistinctlanding}, this situation is illustrated in the bottom right picture in Figure \ref{fig_stars}. Let $\rho$, $\rho'$ denote the eigenvalues  of the landing $q$-cycles $\langle z\rangle\neq \langle z' \rangle$. Then case 4. follows from Lemma \ref{lemma:wiremodulus} as in case 3.
\end{proof}

\begin{proof}[Proof of Corollaries \ref{cor:fatcycles} and \ref{cor:extracycles}]
Since 
\begin{eqnarray}
\nonumber \lambda_k\to\opq \text{ subhorocyclicly } & \Leftrightarrow & r_{\lambda_k}\to 0,\\
\nonumber \mu_k\to\omega_{-p/q} \text{ subhorocyclicly } & \Leftrightarrow & r_{\mu_k}\to 0,\\
\nonumber \rho_k\to 1 \text{ subhorocyclicly } & \Leftrightarrow & r_{\rho_k}\to 0,
\end{eqnarray}
the statements follow immediately from Proposition \ref{prop:eigenvalueconv}.
\end{proof}

\section{Dynamical markings rel $\mathrm{P}$}\label{sectdynmarkrelPproofs}
 In this section we will prove Propositions \ref{prop:markinglambdarelP} and \ref{prop:parabolicmarkings}, and show uniqueness of dynamical markings rel $\mathrm{P}$.

The parametrization $\Phi^\lambda: \mathcal{A}^*_{M} \to \mathcal{R}^{\lambda}$ from Proposition \ref{prop:markinglambdarelP}, and hence the map $\mathbf{\Phi}$ from Definition \ref{defParameter}, will be defined through a dynamical conjugacy between the quadratic polynomials $\mathrm{P}$
and $\mathrm{P}_{\lambda}$, defined on a suitable subset of the closure of the parabolic basin $\overline{A}$ into the attracting basin $A_\lambda$. 

Consider the subset $\mathcal{A}_{m(\lambda)}\subset \overline{A}$ defined in Equation (\ref{def:Xim}) and with $m(\lambda)=\frac{2\pi\sin\theta}{q|L|}$ or $m(\lambda)=m(\omega)=\infty$, for $\lambda=\omega$, as defined in Equation (\ref{eqn_mlambdaattrpar}). Recall that 
$\gamma_\lambda=\bigcup_{j\in\Z_q} \gamma^j_\lambda\cup \{0\}$ 
denotes the $q$-cycle of wires for the $p/q$-star $\Sigma_\lambda$ for $\mathrm{P}_{\lambda}$ and let $\Gamma= \bigcup_{n\in \N}\mathrm{P}^{-n}(\gamma_\lambda \setminus\{0\})$.

\begin{prop}\label{prop:maphlambda}
Suppose $\lambda\in \D_{-\omega}$. There exists a bijection 
\[h_\lambda: \mathcal{A}_{m(\lambda)}\to A_\lambda\setminus \Gamma,\]
with $h_\lambda(0)=0$ and $h_\lambda(-\omega/2)=-\lambda/2$, which is univalent on the interior of the domain, such that 
\begin{equation}\label{functeqnhlambda}
	h_\lambda\circ\mathrm{P} = \mathrm{P}_{\lambda}\circ h_\lambda,
\end{equation}
and so that for any $x\in \mathcal{A}^*_{m(\lambda)}$, the restriction of $h_\lambda$ to $\mathcal{U}_{m(\lambda)}(x)$ is a homeomorphism onto its image. The map $h_\lambda$ is unique.
Moreover, for every $x\in \mathcal{A}_{M}$, $h_\lambda$ depends analytically on the parameter $\lambda\in D_M$.
\end{prop}
\begin{figure}[htbp]
  \begin{center}
 \includegraphics[width=12.5cm]{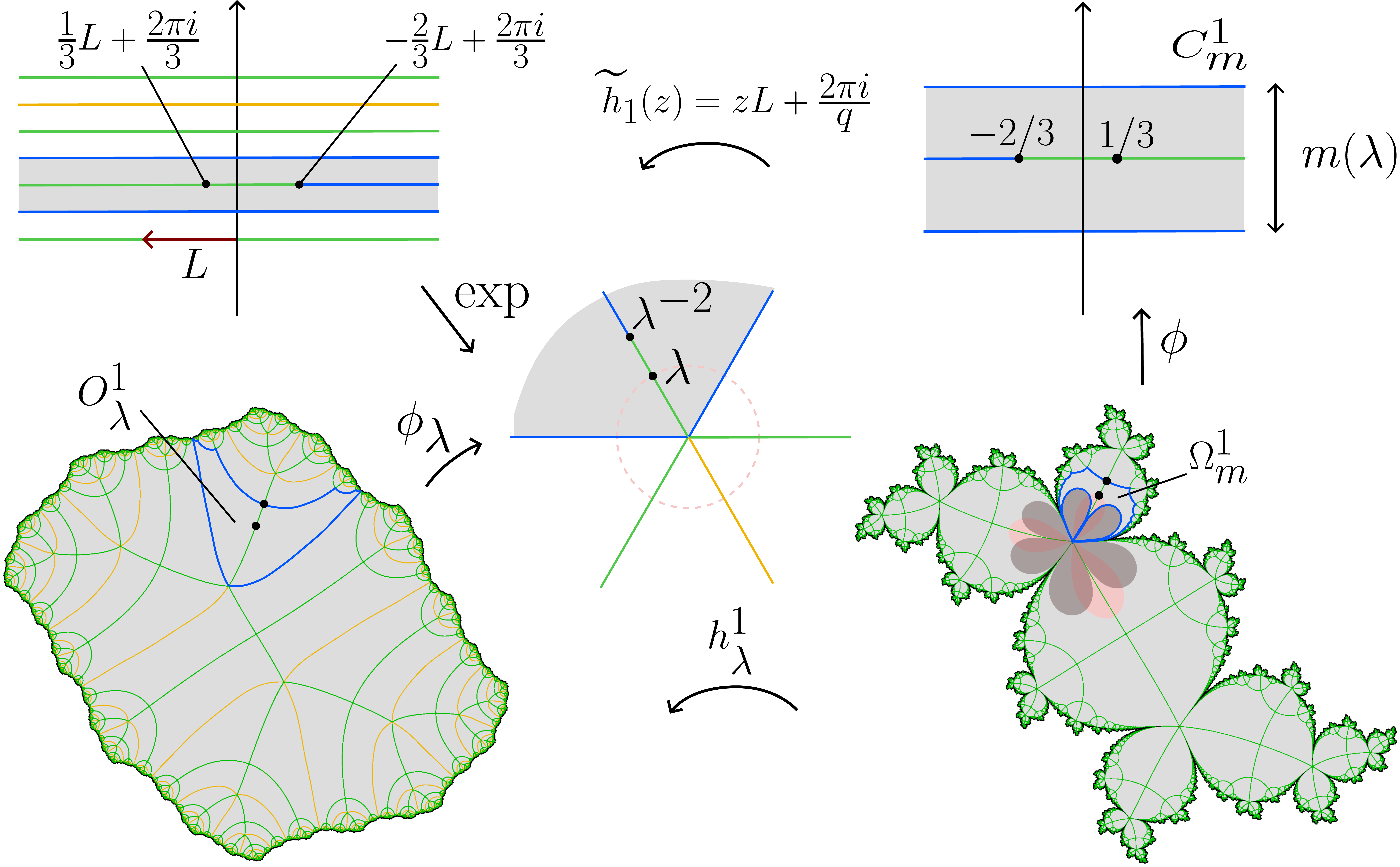}
  \end{center}
  \caption{An illustration of the definition of the map $h_\lambda$. The figure shows $h_\lambda^1$ for $p/q=1/3$. The domains $\Omega_m^1$ and $O_\lambda^1$ are highlighted with blue contours.  \label{fig:hlambda} }
\end{figure}
\begin{proof}[Proof of Proposition \ref{prop:maphlambda}]
The map $h_\lambda$ will first be defined as a map onto $\Sigma_{\lambda}\setminus \gamma_\lambda$, the reader is recommended to consult Figure \ref{fig:hlambda}.
Let the Fatou coordinate $\phi$ of $\mathrm{P}$ and the linearizing coordinate $\phi_\lambda$ of $\mathrm{P}_{\lambda}$ be normalized as described in Section \ref{results} and \ref{sect_stardef} respectively.
Recall that the linearizing coordinate $\phi_\lambda$ of $\mathrm{P}_{\lambda}$ is univalent on the star $\Sigma_{\lambda}$, we let $\phi^{-1}_\lambda$ denote the inverse of the restriction $\phi_\lambda|_{\Sigma_{\lambda}}$. 
The cycle of wires $\gamma_\lambda$ separates $\Sigma_\lambda$ into $q$ components, denote these $O^j_{\lambda}$, $j\in \Z_q$, numbered so that $\tau_\lambda^j\subset O^j_{\lambda}$. Also let 
$O_\lambda:=\bigcup_{j\in \Z_q} O^j_{\lambda}=\Sigma_{\lambda}\setminus \gamma_\lambda$ and $\widehat{O}_\lambda:=O_\lambda\cup \{0\}$.

Each $O^j_{\lambda}$ is forward invariant under $\mathrm{P}^q_{\lambda}$ and corresponds to a slit horizontal strip, $C^j_{m}$, of height $m(\lambda)$ in the normalized log-linearizing coordinate $\frac{\log\circ\phi_\lambda}{L}$, see Figure \ref{fig:hlambda}. 
To be precise:
\[C^j_{m}=\left\{z\in \C : |\Im(z)|< m(\lambda)/2\right\}\setminus \left\{z\in\R : z \leq \frac{-(q-\kappa(j))}{q}\right\},\]where $\kappa(j)=\frac{j}{p} \in\Z_q$ using the representative in $\{1, ..., q\}$ as in Equation (\ref{eqn_nj}). 
Define for $j\in\Z_q$ affine maps $\tilde h^j:\C\to\C$ by $\tilde h^j(z)=zL+ \frac{j}{q}2\pi i$. Then the map $\phi^{-1}_\lambda\circ \exp\circ \tilde h^j:C^j_{m}\to O^j_{\lambda}$ is an isomorphism that conjugates $z\mapsto z+1$ to $\mathrm{P}_{\lambda}^q$.

On the other hand, the parabolic basin of 0 for $\mathrm{P}$ has $q$ components, named $B^0,..., B^{q-1}$ (cf Section \ref{results} and Figure \ref{Fatrabbit}). Let $\Omega^j_{m}$ denote the component of $\phi^{-1}(C^j_{m})$ in $B^j\cap \mathcal{A}_{m(\lambda)}$, with 
 the parabolic fixed point 0 on the boundary. It is a (double) tile in the so-called checkerboard-tiling of the basin, with a pair of sepals of height $m(\lambda)/2$ removed (see Figure \ref{fig:hlambda}). Also, let
$\Omega_{m}=\bigcup_{j\in \Z_q} \Omega^j_{m}$ and $\widehat{\Omega}_{m}=\Omega_{m}\cup \{0\}$.
For every $j\in\Z_q$ the map 
\[h^j_{\lambda}:= \phi^{-1}_\lambda\circ \exp \circ \, \tilde h^j\circ \phi \: : \Omega^j_{m}\to O^j_{\lambda}\]
is an isomorphism, that conjugates $\mathrm{P}^q$ to $\mathrm{P}_{\lambda}^q$. Since the quotient of $\Omega^j_{m}$ by $\mathrm{P}^q$ is a cylinder of modulus $m(\lambda)$, the collection of maps $h^j_{\lambda}$ are the unique maps that embed the $q$ cylinders conformally in the quotient torus of 0 for $\mathrm{P}_{\lambda}^q$, as required. Thus the constructed map $h_\lambda$ will be unique.

Each $h^j_{\lambda}$ extends to 0 by setting $h^j_{\lambda}(0)=0$, 
such that $h^j_{\lambda}$ is continuous on every restricted domain $\mathcal{U}_{m(\lambda)}(x)\cap (\Omega^j_{m}\cup\{0\})$. We remark that $h^j_{\lambda}$ is not continuous at 0 in $\Omega^j_{m}\cup\{0\}$, since this domain has three accesses to 0, whereas the range $O^j_{\lambda}$ only has one (the other two map to landing points of wires), see again Figure \ref{fig:hlambda}.

Next, the $q$ maps $h^j_{\lambda}$ are pasted together to obtain 
\[h_{\lambda}: \widehat{\Omega}_{m}\to \widehat{O}_\lambda,\]
which conjugates $\mathrm{P}$ to $\mathrm{P}_{\lambda}$.
By successive lifting of $h_\lambda\circ\mathrm{P}$ to $\mathrm{P}_{\lambda}$, each time choosing the lift that coincides with $h_\lambda$ in the previous domain, $h_{\lambda}$ extends to a bijection:
\[h_{\lambda}: \mathcal{A}_{m(\lambda)} \to A_\lambda\setminus\Gamma.\]
The domain $\mathcal{A}_{m(\lambda)}$ of $h_\lambda$ is connected, but neither open nor closed, $h_\lambda$ obeys the functional equation (\ref{functeqnhlambda}), is holomorphic in $\inter(\mathcal{A}_{m(\lambda)})$
and continuous on any $\mathcal{U}_{m(\lambda)}(x)$.
Moreover, by the properties of Fatou and linearizing coordinates, $h_\lambda$ depends analytically on the parameter $\lambda\in D_M$.
\end{proof}
Note that $h_\lambda$ is not continuous in $\mathcal{A}_{m(\lambda)}$, however, it follows from the construction in the proof that the inverse $h_\lambda^{-1}$ is continuous onto $\mathcal{A}_{m(\lambda)}$.

\subsection{Adjusted domains for dynamical markings rel $\mathrm{P}$}\label{markingRbyPadjustdomain}
Suppose $\lambda\in \D_{-\omega}\cup\{\omega\}$ and $\sigma \in \mathcal{R}^\lambda$ and let $g\in \Gamma_{\lambda,\sigma}$ with a fixed point at $z_0$ of eigenvalue $\lambda$.
Recall that a dynamical marking of $g$ rel $\mathrm{P}$ is a pair $(x,\psi)$ where $x\in \mathcal{A}^*_{m(\lambda)}$ and $\psi: \mathcal{U}\to V$ is a homeomorphism, so that $\psi\circ \mathrm{P}=g\circ \psi$, where $\mathcal{U}=\mathcal{U}_{m(\lambda)}(x)=\mathrm{P}^{-n}(\widehat{\mathcal{P}}_{0})\cap \mathcal{A}_{m(\lambda)}$ for 
$n=n(x)$ minimal so that $\mathrm{P}^{n}(x)\in \widehat{\mathcal{P}}_{0}$. Moreover, $\psi$ is holomorphic on $\inter(\mathcal{U})$,  and $v_1=\psi(-\frac{\omega^2}{4})$ and $v_2=\psi(x)$ are the critical values of $g$. 

Let $\H_{-n/q}$ denote the right halfplane $\{z=x+iy\in\C : \, x>-n/q\}$, then $\phi(\inter(\mathcal{U}))$ is the horizontal band in $\H_{-n/q}$ of height $m(\lambda)$, and centered on $\R$.

We will slightly adjust the domains of dynamical marker maps. As mentioned before, the choice of domain for the definition of dynamical marker maps is not canonical, the one we have for Definition \ref{def:dynmarkingpara} is chosen so that marker maps become homeomorphisms and for simplicity of the presentation. The 
adjustment matches the domains with the domains of the dynamical marker maps rel $\mathrm{P}_\lambda$ and is otherwise inconsequential. 

Let $\widehat\Omega_{m(\lambda)}\subset \mathcal{A}_{m(\lambda)}$ as in the definition of $h_\lambda$ in the proof of Proposition \ref{prop:maphlambda} (see also Figure \ref{fig:hlambda}). For $\lambda=\omega$ we will use the notation $\Omega^j$, $\Omega$ and $\widehat\Omega$, instead of $\Omega^j_\infty$, $\Omega_\infty$ and $\widehat\Omega_\infty$, for the tiles at 0 in $\mathcal{A}$ without sepals removed. Let $0\leq l\leq n$ be minimal so that $\mathrm{P}^{l}(x)\in \widehat\Omega_{m(\lambda)}$ and redefine $\mathcal{U}$ so that $\mathcal{U}=\mathcal{U}_{m(\lambda)}(x):=\mathrm{P}^{-n}(\widehat{\mathcal{P}}_{0})\cap \mathrm{P}^{-l}( \widehat\Omega_{m(\lambda)})$, see Figure \ref{fig_reducedD} for an example. 
\begin{figure}[htbp]
  \begin{center}
 \includegraphics[width=12cm]{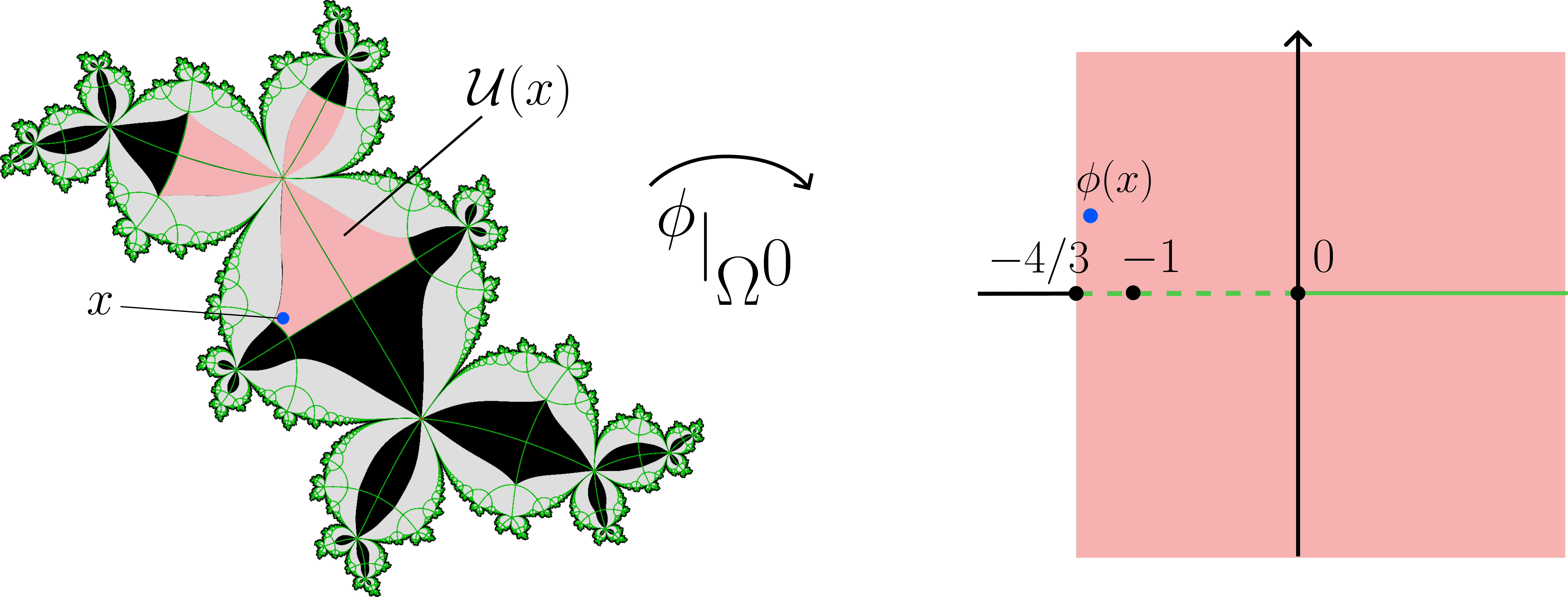}
  \end{center}
  \caption{An example of a reduced domain with $n=4$ and $l=0$. The original domain $\mathrm{P}^{-4}(\widehat{\mathcal{P}}_{0})$ for the marker map $\psi$ is shown in black and the reduced domain $\mathcal{U}(x)=\mathrm{P}^{-4}(\widehat{\mathcal{P}}_{0})\cap \widehat\Omega$ is shown in pink. The figure also shows the image of $\mathcal{U}(x)\cap \Omega^0$ in the Fatou coordinate.\label{fig_reducedD} }
\end{figure}
Then $\phi(\inter(\mathcal{U}))$ is now contained in the band in $\H_{-n/q}$ of height $m(\lambda)$ and centered on $\R$.

With this redefinition of the domain we retain its important properties: by construction $\mathcal{U}$ is forward invariant and connected, but neither open nor closed and  $0, -\frac{\omega^2}{4}, x\in \mathcal{U}$.  It follows from the properties of $\mathcal{U}$ and $\psi$, that $\psi(0)=z_0$, $V=\psi(\mathcal{U})$ is neither open nor closed, and that $V$ is connected, forward invariant and
\[
V\subseteq\begin{cases}
	A_{\lambda,\sigma} & \text{ for } \lambda\in \D_{-\omega}\\
	A_{\lambda,\sigma} \cup \bigcup_{k=0}^{l}g^{-k}(z_0) & \text{ for } \lambda= \omega,
\end{cases}
\]
where $A_{\lambda,\sigma}$ is the attracting or parabolic basin of $g$ at $z_0$, and with $l\leq n=n(x)$ from the definition of $\mathcal{U}$.
\subsection{Parametrization of $\mathcal{R}$ by dynamical markings rel $\mathrm{P}$}\label{markingRbyP}
We consider again $(\lambda,\sigma)\in \mathcal{R}$, in particular let $\lambda\in \D_{-\omega}$ and $\sigma \in \mathcal{R}^\lambda$ and let $g\in \Gamma_{\lambda,\sigma}$ with a fixed point at $z_0$ of eigenvalue $\lambda$.

\begin{proof}[Proof of Proposition \ref{prop:markinglambdarelP}]
For $M>1/q^2$ consider $(\lambda,x)\in D_M\times \mathcal{A}^*_{M}$ and set $\sigma=(\chi^\lambda)^{-1}\circ \pi_\lambda\circ h_\lambda(x)\in \mathcal{R}^{\lambda}$. Then, by the properties of the model of $\mathcal{R}^{\lambda}$, see Proposition \ref{th:gk}, $\sigma$ is dynamically marked by $h_\lambda(x)$ rel $\mathrm{P}_\lambda$. Let $g\in \Gamma_{\lambda,\sigma}$ and let $\psi_\lambda:U_\lambda\to V_\lambda$ denote the corresponding marker map rel $\mathrm{P}_\lambda$, with its dynamical labeling of the critical values of $g$, $v_1=\psi_\lambda(-\lambda^2/4)$ and $v_2=\psi_\lambda(h_\lambda(x))$. Recall that the domain of $\psi_\lambda$ is $U_\lambda=U_\lambda(h_\lambda(x))$ as defined in Section \ref{sect:dynmarkrelPl}, and that $U_\lambda\subset\Sigma_\lambda$
 if $h_\lambda(x)\in \Sigma^*_\lambda\setminus \{-\frac{\lambda^2}{4}\}$
 and $U_\lambda=\mathrm{P}_\lambda^{-l}(\Sigma_\lambda)$ if $h_\lambda(x)\in A_\lambda\setminus \Sigma_\lambda$, with $l=l(x)\geq 1$ minimal such that $\mathrm{P}_\lambda^{l}(h_\lambda(x))\in \Sigma_\lambda$ and equivalently minimal so that $\mathrm{P}^{l}(x)\in \widehat\Omega_{m(\lambda)}$.

Let $\mathcal{U}=\mathcal{U}_{m(\lambda)}(x)$ denote the (restricted) domain in $\mathcal{A}_{m(\lambda)}$ as defined above, then $h_\lambda(\mathcal{U})\subset U_\lambda$ and we can define $\psi:=\psi_\lambda\circ h_\lambda: \mathcal{U}\to V$, where $V=\psi(\mathcal{U})$. By the properties of $\psi_\lambda$ and $h_\lambda$ and since $\psi(-\omega^2/4)=\psi_\lambda(h_\lambda(-\omega^2/4))=\psi_\lambda(-\lambda^2/4)=v_1$ and $\psi(x)=\psi_\lambda\circ h_\lambda(x)=v_2$, it follows that $g$ is dynamically marked by $(x,\psi)$ rel $\mathrm{P}$. 
The uniqueness of $h_\lambda$ (cf. Proposition \ref{prop:maphlambda}) and of dynamical markings rel $\mathrm{P}_\lambda$ (cf. Lemma \ref{lemma:pullback}) imply that $\sigma$ is the unique parameter in $\mathcal{R}^{\lambda}$ which is marked by $x$ rel $\mathrm{P}$ and $\psi$ is unique up to automorphisms of $g$.

Moreover, if $x\notin\Omega_{m}$, then $h_\lambda(x)\notin \Sigma_\lambda$, whence $x$ is the unique marking of $\sigma$ rel $\mathrm{P}$. If $x\in\Omega_{m}\setminus \partial\mathcal{P}_{1/q}$ then $h_\lambda(x)$ and $I_\lambda(h_\lambda(x))$ are the unique markings of $\sigma$ rel $\mathrm{P}_\lambda$, however $I_\lambda(h_\lambda(x))$ is not in the image $h_\lambda(\mathcal{A}^*_{M})$, whence $x$ is the unique marking of $\sigma$ rel $\mathrm{P}$.

If $x\in \partial\mathcal{P}_{1/q}$ then $x'\in \partial\mathcal{P}_{1/q}$, so that $h_\lambda(x')=I_\lambda\circ h_\lambda(x)$, is the only other  marking of $\sigma$ rel $\mathrm{P}$.
With reference to the parabolic basin alone, the condition becomes: if $x\in \partial\mathcal{P}_{1/q}^{p+j}$ then $x'\in \partial\mathcal{P}_{1/q}^{p-j}$, both $p+j$ and $p-j$ taken modulo $q$, with $\Im(\phi(x'))=-\Im(\phi(x))$ is the only other marking of $\sigma$ rel $\mathrm{P}$.

Lastly, the partial parametrization $\Phi^\lambda: \mathcal{A}^*_{M} \to \mathcal{R}^{\lambda}$ is thus defined by $\Phi^\lambda=(\chi^\lambda)^{-1}\circ \pi_\lambda\circ h_\lambda$ on $\mathcal{A}^*_{M}$. 
It follows immediately from the definition and properties of the maps $\chi^\lambda$, $\pi_\lambda$ and $h_\lambda$ (see Section \ref{chap:gkmodel} and Proposition \ref{prop:maphlambda}), that $\Phi^\lambda$ is holomorphic on $\inter(\mathcal{A}^*_{M})$ and 
the restriction to 
$\mathrm{P}^{-n}(\widehat{\mathcal{P}}_{0})\cap \mathcal{A}^*_{M}$ is continuous for every $n$. 
Since the constituent maps in $\Phi^\lambda$ depend analytically on the parameter $\lambda$, so does $\Phi^\lambda$, that is, for every $x\in \mathcal{A}^*_{M}$, $\lambda\mapsto \Phi^\lambda(x)$ is analytic. 
\end{proof}

The proof shows how not every $\sigma\in \mathcal{R}^{\lambda}$ is dynamically marked rel $\mathrm{P}$, only those that are marked rel $\mathrm{P}_\lambda$ by a point in $A_\lambda\setminus \Gamma$. Moreover, we remark that if $g\in \Gamma_{\lambda,\sigma}$ is dynamically marked by $x\in\Omega$, then $h_\lambda(x)\in \Sigma_\lambda$ (equivalently $v_2\in \Sigma_{\ls}$). In this case $|\Im(\phi(x))|=H$, where $H=H_{p/q}(\lambda,\sigma)$ is the visible height of $v_2$ rel $\mathrm{P}$ defined in Section \ref{sect:modulus}, cf. Figures \ref{fig_visheight} and \ref{fig:hlambda}.

\subsection{Dynamical markings rel $\mathrm{P}$ on $\mathcal{R}^\omega$}\label{sectdynmarkrelPonRomega}
Dynamical markings rel $\mathrm{P}$ for parameters $\sigma\in\mathcal{R}^\omega\setminus \mathcal{D}^\omega$ can be constructed using Fatou coordinates, in the same way that linearizing coordinates are used to construct dynamical markings rel $\mathrm{P}_\lambda$ for parameters $\sigma\in\mathcal{R}^\lambda$. 
Note that when $\lambda=\omega$, we have $m(\omega)=\infty$ so we shall use the notation $\mathcal{U}=\mathcal{U}(x)\subset \mathcal{A}= \mathcal{A}_\infty$ for the domain $\mathcal{U}_{m(\omega)}(x)=\mathcal{U}_\infty(x)$ of the dynamical marker map in this case.  

\begin{proof}[Proof of Proposition \ref{prop:parabolicmarkings}]
Recall that $\phi$ is a Fatou coordinate for $\mathrm{P}$. Let $g\in \Gamma_{\omega,\sigma}$, with a fixed point at $z_0$ of eigenvalue $\omega$, and let $\varphi:=\varphi_{\omega,\sigma}$ denote a Fatou coordinate for $g$, with relative normalization between components $B^j(g)$ so that $\varphi\circ g=1/q+\varphi$. Let $\mathcal{P}(g)=\bigcup_{j\in\Z_q}\mathcal{P}^j(g)$ denote a maximal attracting flower for $g$ as defined for $\mathrm{P}$ in Section \ref{results}, then there is at least one critical point of $g$ in its boundary. Label the critical points of $g$ so that $c_1\in\partial \mathcal{P}(g)$, then $v_1=g(c_1)$ and $v_2$ is the other critical value of $g$. We normalize $\varphi$ so that $\varphi(v_1)=1/q$. Let $\varphi_j^{-1}$
 denote the inverse of the restriction of $\varphi$ to each $\mathcal{P}^j(g)$.
 
We first define $\psi$ on each petal $\mathcal{P}^j_0$ of $\mathrm{P}$ onto $\mathcal{P}^j(g)$ as $\psi:=\varphi^{-1}\circ\phi$, and then extend continuously to 0 by $\psi(0)=z_0$, so that $\psi$ is defined from $\widehat{\mathcal{P}}_0$ onto $\widehat{\mathcal{P}}(g)=\mathcal{P}(g)\cup \{z_0\}$. Either $v_2\in \widehat{\mathcal{P}}(g)$ and we let $x=\psi^{-1}(v_2)$ and $\mathcal{U}=\widehat{\mathcal{P}}_0$. Or there is a minimal $n>0$ so that $g^n(v_2)\in \widehat{\mathcal{P}}(g)$, in which case we define $\psi$ on $\mathrm{P}^{-n}(\widehat{\mathcal{P}}_0)$ by iterated use of the functional equation $\psi\circ\mathrm{P}=g\circ\psi$. Then we let $x=\psi^{-1}(v_2)$ and $\mathcal{U}=\mathcal{U}(x):=\mathrm{P}^{-n}(\widehat{\mathcal{P}}_0)\cap \mathrm{P}^{-l}( \widehat\Omega)$, where $0\leq l\leq n$ is minimal so that $\mathrm{P}^{l}(x)\in \widehat\Omega$. In both cases, $x=\psi^{-1}(v_2)\in \mathcal{A}^*$ is a dynamical marking of $\sigma$ rel $\mathrm{P}$, with corresponding dynamical marker map $\psi:\mathcal{U}\to V$, where $V=\psi(\mathcal{U})$. 
Moreover, $x\notin S^p$ by results in \cite{u10}, see the discussion preceding Proposition \ref{prop:parabolicmarkings}.

Again, by the properties of dynamical marker maps, $\varphi\circ \psi$ is a Fatou coordinate for $\mathrm{P}$, and we have chosen the relative normalization so that $\varphi\circ \psi=\phi$, by letting $\varphi(v_1)=1/q$. The only other choice lies in the labeling of the critical values of $g$ when $x\in \partial \mathcal{P}_{1/q}$, which leads to the statement.  
\end{proof}

For $\lambda\in \D_{-\omega}$ and for $\sigma \in \mathcal{R}^\lambda$ we have from Proposition \ref{prop:markinglambdarelP} and its proof that if $g\in \Gamma_{\lambda,\sigma}$ is dynamically marked by $(x,\psi)$ rel $\mathrm{P}$, then $\sigma$ is the unique parameter in $\mathcal{R}^{\lambda}$ dynamically marked by $x$ and the marker map $\psi$ is unique up to automorphisms of $g$.
A more technical pullback argument shows similarly for $\lambda= \omega$:

\begin{prop}\label{thm:injective1}
	Let $\sigma, \sigma' \in \mathcal{R}^\omega\setminus \mathcal{D}^\omega$, so that $g\in \Gamma_{\omega,\sigma}$ is dynamically marked by $(x,\psi)$ and $g'\in\Gamma_{\omega,\sigma'}$ is dynamically marked by $(x,\psi')$, both rel $\mathrm{P}$, for some $x \in \mathcal{A}^*$.  Then $\psi'\circ\psi^{-1}$ extends to a M\"obius conjugacy of $g$ to $g'$, in particular $\Gamma_{\omega,\sigma}=\Gamma_{\omega,\sigma'}$ and equivalently $\sigma=\sigma'$.
\end{prop}\label{thm:injective}
Together with the discussion of uniqueness in the proof of Proposition \ref{prop:markinglambdarelP} this shows our claim in Section \ref{sectdynmarkrelP}, about the uniqueness of dynamical markings rel $\mathrm{P}$. 

In the situation of Proposition \ref{thm:injective1}, the dynamical markings induce dynamical labels $v_1=\psi(-\omega^2/4)$, $v_2=\psi(x)$ and $v'_1=\psi'(-\omega^2/4)$, $v'_2=\psi'(x)$ of the critical values of $g$ and $g'$ respectively. Since $g$ and $g'$ are both dynamically marked by $x$ rel $\mathrm{P}$, the marker maps $\psi$ and $\psi'$ have the same domain, $\mathcal{U}=\mathcal{U}(x)=\mathrm{P}^{-n}(\widehat{\mathcal{P}}_{0})\cap \mathrm{P}^{-l}(\widehat{\Omega})$ for $n\geq 0$ and $0 \leq l\leq n$ minimal so $\mathrm{P}^n(x)\in \widehat{\mathcal{P}}_{0}$ and $\mathrm{P}^l(x)\in \widehat{\Omega}$. As usual let $V=\psi(\mathcal{U})$ and $V'=\psi'(\mathcal{U})$ denote the ranges of $\psi$ and $\psi'$ respectively. 

Let $\Psi_0:=\psi'\circ\psi^{-1}:V\to V'$. Then, by the properties of dynamical markings, $\Psi_0$ is a homeomorphism, holomorphic on $\inter(V)$ and it conjugates $g$ to $g'$ where it is defined.  
For the proof of Proposition \ref{thm:injective1} we need the following Lemma.
\begin{lemma}\label{lemma:qcextinj}
 Let $\sigma, \sigma' \in \mathcal{R}^\omega\setminus \mathcal{D}^\omega$, so that $g\in \Gamma_{\omega,\sigma}$ is dynamically marked by $(x,\psi)$ and $g'\in\Gamma_{\omega,\sigma'}$ is dynamically marked by $(x,\psi')$, both relative to $\mathrm{P}$, for some $x \in \mathcal{A}^*$. The map $\Psi_0=\psi'\circ\psi^{-1}: V\to V'$ can be continued to a $K$--quasiconformal map
  $\widehat \Psi_0:\CC\to\CC$, so that $\widehat\Psi_0=\Psi_0$ on
  $V$.
\end{lemma}

The situation in Proposition \ref{thm:injective1} and Lemma \ref{lemma:qcextinj} is illustrated in Figure
\ref{fig:qcextension1}. We first prove Proposition \ref{thm:injective1}, assuming Lemma \ref{lemma:qcextinj}, and then prove Lemma \ref{lemma:qcextinj}.
\begin{figure}[htbp]
 \begin{center}
    \includegraphics[width=13cm]{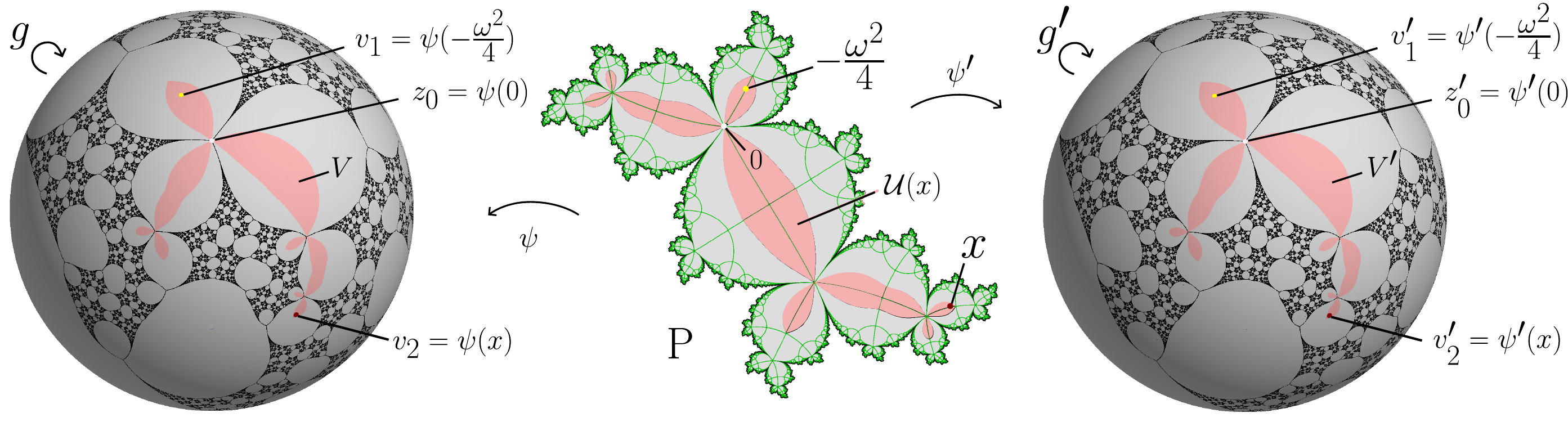}
  \end{center}
  \caption{\label{fig:qcextension1} Illustrates dynamical marking rel $\mathrm{P}_{\omega_{1/3}}$ for two maps $g\in \Gamma_{\omega_{1/3},\sigma}$ and $g'\in\Gamma_{\omega_{1/3},\sigma'}$, that are both marked by the same $x$, as in Proposition \ref{thm:injective1}. The middle shows $\mathrm{P}_{\omega_{1/3}}$ with the domain $\mathcal{U}(x)$ of $\psi$ and $\psi'$ in black, and the left and right pictures illustrate $g$ and $g'$ with the ranges $V$ and $V'$ in pink. }
\end{figure}

\begin{proof}[Proof of Proposition \ref{thm:injective1}]
As introduced above, let  $\Psi_0:=\psi'\circ\psi^{-1}:V\to V'$.
Because of the dynamical labeling induced by the marker maps, we have $\Psi_0(v_1)=v_1'$ and $\Psi_0(v_2)=v_2'$.
    
The next step is to construct a sequence of $K$--quasiconformal maps
$(\widehat\Psi_n:\CC\to\CC)_n$, with the property that they are conformal conjugacies of $g$ to $g'$ on larger and larger domains. 
The first map in the sequence is the $K$--quasiconformal map $\widehat \Psi_0:\CC\to\CC$ from Lemma \ref{lemma:qcextinj}, which is a conjugacy of $g$ to $g'$ on $V$ and conformal on $\inter(V)$, since $\widehat\Psi_0=\Psi_0$ on $V$.

For $n\in\N_0$, let $V_n=g^{-n}(V)$ and $V'_n=(g')^{-n}(V)$. Since $\widehat\Psi_0(v_1)=\Psi_0(v_1)=v_1'$ and $\widehat\Psi_0(v_2)=\Psi_0(v_2)=v_2'$, maps $\widehat\Psi_n$ can be defined by iterated lifting. To be precise, for $n\in\N$, the map
$\widehat\Psi_n:\CC\to\CC$ is defined as the lift of $\widehat\Psi_{n-1}\circ
g:\CC \to \CC$ to $g': \CC \to \CC$, where the
lift that coincides with $\widehat\Psi_{n-1}$ on $V_{n-1}$ is chosen.

Hence $\widehat\Psi_n$ conjugates $g$ to $g'$ on $V_n$ and
the restriction to $\inter(V_n)$ is conformal.
The map $\widehat\Psi_n$ is still (at worst) $K$--quasi conformal, since
pull--back by holomorphic maps can change the orientation
of the complex structure defined by $\widehat\Psi_{n-1}$, but not the
dilatation.

Thus, there exists a sequence of $K$--quasiconformal maps $(\widehat\Psi_n)_n$,
where any two members in the sequence coincide on their common domain
of conformality, and they all coincide on $V$. 

Let $A(g)$ denote the parabolic basin of the parabolic fixed point $z_0$ of $g$ (and $A(g')$ the basin of $z_0'$ for $g'$). Since $\bigcup_{n\in \N} V_{n}=A(g)\cup \bigcup_{n=0}^{\infty}g^{-n}(z_0)$ and $\bigcup_{n\in \N} V'_{n}=A(g')\cup \bigcup_{n=0}^{\infty}(g')^{-n}(z'_0)$ it follows that $\widehat\Psi_n$ converges pointwise everywhere on $A(g)\cup \bigcup_{k=0}^{\infty}g^{-k}(z_0)$
to a limiting map $\Psi_{\infty}:\CC\to\CC$,
where the restriction of $\Psi_{\infty}$ to any
domain $V_N$ is equal to $\widehat\Psi_n$, for all $n\geq N$, hence the restriction
$\Psi_{\infty}:A(g) \to A(g')$ is a holomorphic map.

Since $\widehat\Psi_n=\Psi_0$ in $V$ for all $n\in\N$, the sequence $(\widehat\Psi_n)_n$ is normal (cf. \cite[Satz II.5.1]{lehto65}) and any convergent subsequence converges locally uniformly to a $K$--quasiconformal limit function (cf. \cite[Satz II.5.3]{lehto65}). 
Consider a subsequence which converges to such a $K$--quasiconformal limit function $\Psi:\CC\to\CC$. Then, by the
pointwise convergence of the full sequence, $\Psi=\Psi_{\infty}$ on $A(g)\cup \bigcup_{k=0}^{\infty}g^{-k}(z_0)$.  
Thus, $\Psi$ is conformal everywhere, except possibly on the
Julia set $J=\CC\setminus A(g)$ of $g$.

The maps $g$ and $g'$ have no critical points in their Julia sets, so they are so-called critically non-recurrent rational maps (maps so that any critical point in the Julia set is non-recurrent), whence the Hausdorff dimension of the Julia sets are smaller than 2, $HD(J(g))<2$ and $HD(J(g'))<2$ (see \cite{ur94}). This
implies that the Julia sets $J(g)$ and $J(g')$ have Lebesgue
measure 0 and it follows from Weyl's lemma that $\Psi$ is in fact
conformal on $\CC$.

Furthermore, by construction $\Psi$ is a conjugacy of $g$ to 
$g'$ on the basin $A(g)$, hence $\Psi$ is a conjugacy
on $\CC=\overline{A(g)}$. So $\Psi_0:=\psi'\circ\psi^{-1}:V\to V'$ extends to a M\"obius conjugacy of $g$ to $g'$, in particular $\Gamma_{\omega,\sigma}=\Gamma_{\omega,\sigma'}$ and equivalently $\sigma=\sigma'$.
\end{proof}

\begin{proof}[Proof of lemma \ref{lemma:qcextinj}]
We first extend $\Psi_0$ by hand to a neighborhood of the fixed point $z_0$ of $g$. For the following construction the reader is recommended to consult Figures \ref{fig:qcextension2} and \ref{fig:qcextension3}.

Let $\varphi$ and $\varphi'$ denote the extended Fatou coordinates for $g$ and $g'$ respectively, normalized so $\varphi(v_1)=\varphi'(v'_1)=\frac{1}{q}$. 
Let $N>|\Im(\varphi(v_2)|=|\Im(\varphi'(v_2')|$ when $v_2\in A(g)$ and $N>0$ when $v_2$ is a pre-imge of $z_0$, and let $\mathcal{S}$ denote the collection of $2q$ sepals at $z_0$, i.e. $\mathcal{S}$ is the collection of connected components of $\varphi^{-1}(\{z=x+iy: |y|> N\})$ with $z_0$ on the boundary. Each component of $\mathcal{S}$ and its boundary are invariant under $g^q$. Let $\mathcal{S}'$ denote the collection of sepals for $g'$ similarly defined, and let $W=\mathcal{S}\cup V$ and $W'=\mathcal{S}'\cup V'$. Since 
$v_2\notin \mathcal{S}$ and $v'_2\notin \mathcal{S}'$,
$\Psi_0$ immediately extends to a holomorphic conjugacy of $g$ to $g'$ on $W$, as well as on $\partial \mathcal{S}$. See Figure \ref{fig:qcextension2}.
  \begin{figure}[htbp]
  \centering
\input{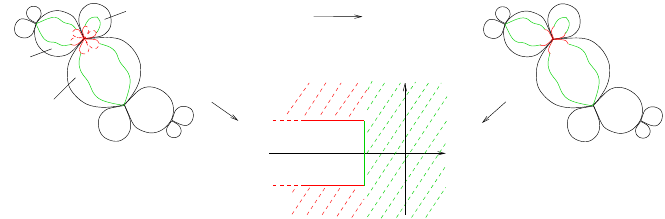_t}
  \caption{\label{fig:qcextension2} A sketch showing (for $p/q=1/3$) the system of sepals $\mathcal{S}$ (red) and $V$ (green), which together form $W$ (and $\mathcal{S}'$, $V'$ $W'$ for $g'$).}
\end{figure}

There are $q$ repelling directions at the fixed point $z_0$ of $g$, 
let $r_j$ denote the repelling direction between the
basin components $B^j=B^j(g)$ and $B^{j+1}=B^{j+1}(g)$ for $g$. Let
$\mathcal{P}_{rep,j}$ be a repelling petal for $g$ for the direction
$r_j$, small enough so that the intersection
$\mathcal{P}_{rep,j}\cap \partial W$ is contained in $\partial\mathcal{S}$, and hence consists of two backward invariant (under $g^q$) curves, denoted $\gamma_{j,1}$ and $\gamma_{j,2}$ and labeled so that $\gamma_{j,1}\subset B^j$. To ease notation let $\gamma_{1}:=\gamma_{j,1}$ and $\gamma_{2}:=\gamma_{j,2}$. See Figure \ref{fig:qcextension3}. Repelling petals $\mathcal{P}'_{rep,j}$ and curves $\gamma'_{1}:=\gamma'_{j,1}$ and $\gamma'_{2}:=\gamma'_{j,2}$ are similarly defined at the fixed point $z_0'$ for $g'$. 
\begin{figure}[htbp]
  \centering
  \input{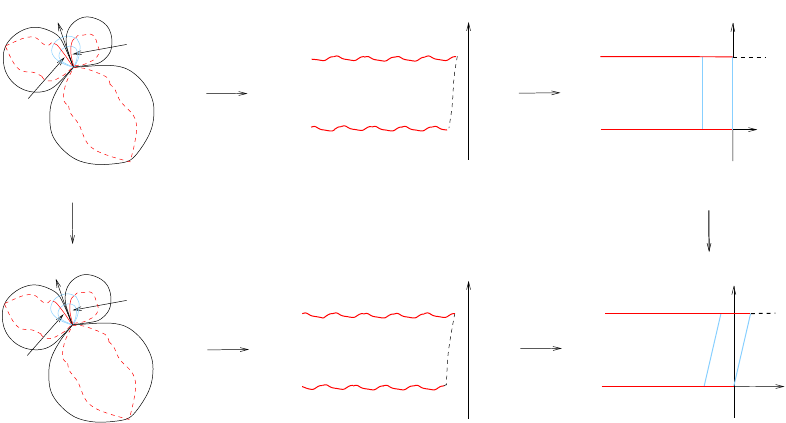_t}
  \caption{\label{fig:qcextension3}A sketch of the extension of
    $\Psi_0$ to a neighborhood of $z_0$. Fundamental domains for the repelling petals $\mathcal{P}_{rep,1}$ and $\mathcal{P}'_{rep,1}$ shown in blue.}
\end{figure}
Then $W\cup \bigcup_{j\in\Z_q}\mathcal{P}_{rep,j}$ and $W'\cup \bigcup_{j\in\Z_q}\mathcal{P}'_{rep,j}$ are neighborhoods of $z_0$ and $z_0'$ respectively.

We will extend $\Psi_0$ quasiconformally on each repelling petal $\mathcal{P}_{rep,j}$, between two adjacent sepal boundary curves $\gamma_{1}$ and $\gamma_{2}$, so that the extension agrees with $\Psi_0$ on the curves $\gamma_{1}$ and $\gamma_{2}$.

Let $\varphi_{rep}:=\varphi_{rep}^{j}$ denote a repelling Fatou coordinate  for the
direction $r_j$ for $g$, i.e. it conjugates $g^q$ to translation by 1. Then the curves $\gamma_{1}$ and $\gamma_{2}$ are mapped by $\varphi_{rep}$ to 
two 1-periodic curves
$\widehat\gamma_{1}$ and $\widehat\gamma_{2}$ bounding a one--sided infinite
strip $C$, or equivalently to two simple closed curves in the
corresponding \`Ecalle cylinder. These two simple closed curves bound
a cylinder with finite modulus embedded in the
\`Ecalle cylinder, denote the modulus $B$. The cylinder is uniformized to a straight cylinder, with
the same modulus and circumference. The uniformizing map induces
a uniformizing map of the strip $C$, to a horizontal strip, $\tilde
C$, we denote this map by $\eta$. By construction $\eta$ commutes with translation by 1. The same holds for $g'$, where we analogously define $\varphi'_{rep}$, $\widehat\gamma'_{1}$, $\widehat\gamma'_{2}$, $C'$, $\tilde C'$ and $\eta'$. See again Figure \ref{fig:qcextension3}.

The curves $\widehat\gamma_{1}$, $\widehat\gamma_{2}$, $\widehat\gamma'_{1}$ and $\widehat\gamma'_{2}$ map to straight, horizontal lines $\tilde\gamma_1=\eta(\widehat\gamma_1)$, $\tilde\gamma_2 =
\eta(\widehat\gamma_2)$, $\tilde\gamma'_1=\eta'(\widehat\gamma'_1)$ and $\tilde\gamma'_2 =
\eta'(\widehat\gamma'_2)$. We will normalize so that
$\tilde\gamma_1\subset \R$ and $\tilde\gamma'_1\subset \R$. The restrictions $\Psi_0:\gamma_1\to
\gamma_1'$ and $\Psi_0:\gamma_2\to \gamma_2'$ descend
to real--analytic diffeomorphisms $\widetilde\Psi_1:\tilde\gamma_1\to
\tilde\gamma_1'$ and $\widetilde\Psi_2:\tilde\gamma_2\to
\tilde\gamma_2'$, which are in particular quasi--symmetric. The original map $\Psi_0$
conjugates $g$ to $g'$, so the functions
$\widetilde\Psi_1$ and $\widetilde\Psi_2$ commute with 
$z\mapsto z+1$, see Figure \ref{fig:qcextension3}.

Next we consider fundamental domains for $z\mapsto z+1$ in the strips $\tilde C$ and $\tilde C'$. Let the uniformizing maps $\eta$ and $\eta'$ be further normalized so that $0\in\tilde\gamma_1$ and $i B\in\tilde\gamma_2$
and so that $\tilde\Psi_1(0)=0$. 
Let $F$ denote the fundamental domain in $\tilde C$ bounded
by the segment $i[0,B]$ in the imaginary axis and the real segment
$[-1,0]$. Let $B_2=\widetilde\Psi_2(i B)\in \tilde\gamma_2'$ and let $F'$ denote the fundamental domain in $\tilde C'$,
which is the quadrilateral determined by the three corners $(-1,0,B_2)$. Then $\widetilde\Psi_1$ maps the lower edge of $F$ to the lower edge of $F'$ and $\widetilde\Psi_2$ maps the upper edge of $F$ to the upper edge of $F'$.

The next objective is to construct a quasiconformal
map $\tilde\Psi_0$ between the fundamental domains $F$ and $F'$, which agrees with $\widetilde\Psi_1$ and $\widetilde\Psi_2$ on the lower and upper edges. We first define $\tilde\Psi_0$ on $\partial F$. On the lower edge we of course let $\tilde\Psi_0=\widetilde\Psi_1$ and on the upper edge $\tilde\Psi_0=\widetilde\Psi_2$. On the right edge, $\tilde\Psi_0$ is defined as the affine map, sending the segment
$i[0,B]$ to the segment $[0,1]B_2$ and on the left edge $\tilde\Psi_0$ is defined as the affine map, sending the segment $-1+i[0,B]$ to the segment $-1+[0,1]B_2$. The definitions agree at the corners, so that 
$\tilde\Psi_0:\partial F\to\partial F'$ defined in this way is a $k$--quasi--symmetric map for some $k\geq 1$. By construction $\tilde\Psi_0:\partial F\to\partial F'$ commutes with $z\mapsto z+1$ on the left edge.

By \cite[Satz II.6.3]{lehto65}) the $k$--quasisymmetric map $\tilde\Psi_0:\partial F\to\partial F'$ can be extended to a $K'$--quasiconformal map $\tilde\Psi_0: F\to F'$, which coincides with the
  quasisymmetric map on the boundary, and where $K'$ only depends
  on $k$. 
  
Now let the strips $\tilde C$ and $\tilde C'$ be cut
  off (at the right end) at the segments $i[0,B]$ and $[0,1]B_2$ respectively. Using the map $z\mapsto z+1$, $\tilde\Psi_0$ can be extended to a
  $K'$--quasiconformal map $\tilde\Psi_0:\tilde C\to \tilde C'$, by letting $\tilde\Psi_0(z)=\tilde\Psi_0(z+l)-l$ for $z\in \tilde C$ and with $l\in\N$ so that $z+l\in F$.  
  
  Finally, $\tilde\Psi_0:\tilde C\to \tilde C'$ is lifted to a $K'$ quasiconformal map $\widehat\Psi_0:\mathcal{P}_{rep,j}\setminus W\to\mathcal{P}_{rep,j}'\setminus W'$, which by construction coincides with $\Psi_0$ on the boundaries $\gamma_1$ and $\gamma_2$. We repeat the procedure, first for every repelling direction $r_j$ at $z_0$ to extend $\Psi_0$ to a quasiconformal map on the neighborhood $W\cup \bigcup_{j\in\Z_q}\mathcal{P}_{rep,j}$ of $z_0$, and then similarly to neighborhoods of all the pre-images of $z_0$ in $V$. Let $D\supset V$ and $D'\supset V'$ denote the new domains, thickened in this way. Thus we have constructed a $K''$-quasiconformal extension $\widehat\Psi_0: D\to D'$, where $K''$ is the maximal of the finitely many quasiconformal dilatations involved in the extension. Moreover, by construction $D$ and $D'$ are Jordan domains whose boundaries are quasicircles.

  Hence, by \cite[Satz II.8.1--II.8.3]{lehto65},
  $\widehat\Psi_0:D\to D'$ extends to a $K$-quasi-conformal map of the
  entire Riemann sphere $\widehat\Psi_0:\CC\to \CC$, with maximal dilatation $K$ depending only on $K''$, $D$ and $D'$, and so that
  $\widehat\Psi_0=\Psi_0$ on $V$.
\end{proof}

\section{Proofs of Main Theorems}\label{sectMainProofs}

We will now consider $x$-horizontal sequences for $x\in \mathcal{A}^*$. That is, let $M>1/q^2$ so that $x\in \mathcal{A}^*_M$, let $(\lambda_k)_k\subset D_M$ be a sequence with $\lambda_k\to\omega$ subhorocyclicly, then $(\lambda_k,\sigma_k)_k=(\lambda_k,\Phi_x(\lambda_k))_k$ is an $x$-horizontal sequence.
Let $g_k=g_{\lambda_k,\sigma_k}\in {\bfnf}_{\lambda_k,\sigma_k}$ and let $\psi_k$ denote the dynamical marker map of $g_k$ rel $\mathrm{P}$ corresponding to $x$. The domain $\mathcal{U}_{m(\lambda_k)}(x)$ of $\psi_k$ depends on $\lambda_k$, but for $\lambda_k \in D_M$, all marker maps $\psi_k$ are defined at least on $\mathcal{U}_M(x)$. We use this domain as the common domain for the maps $\psi_k$ in the following, so that $\psi_k:\mathcal{U}_{M}(x)\to V_k$, where $V_k:=\psi_k(\mathcal{U}_{M}(x))$, denotes the dynamical marker map rel $\mathrm{P}$, corresponding to the marking $x$, of the map $g_k$.

Recall that $B^j$ denotes the components of the immediate basin of 0 for $\mathrm{P}$, labeled counterclockwise and so that $B^0$ contains the critical point $-\omega/2$. Let $\mathcal{U}^j=\mathcal{U}_M(x)\cap B^j$ and $V^j_k=\psi_k(\mathcal{U}^j)$. The domains $\mathcal{U}^j$ and $V^j_k$ are isomorphic to disks, forward invariant under $\mathrm{P}^q$ and $g_k^q$ respectively, and each $\psi_k$ is univalent on $\mathcal{U}^j$, where it conjugates $\mathrm{P}^q$ to $g_k^q$.

The next lemma deals with rescalings of sequences of maps $G_{\lambda_k, a_k} \in {\bfnf}_{\lambda_k,\sigma_k}$ with $\sigma_k \to \infty$, which we will need for the proofs of both Theorems \ref{thm:convergence} and \ref{thm:convergenceunbound}. Recall the normal form $G_T(z)=z+\frac{1}{z}+T$. We will use the notation $B_T$ to denote the parabolic basin of $\infty$ for $G_T$. When the critical point 1 is in $B_T$, we let $B_T(1)$ denote the component of $B_T$ which contains 1.

\begin{lemma} [Dynamics at infinity]\label{lemma:dynamicsinfinity}
Let $x \in \mathcal{A}^*_{M}$, and let $\lambda_k\in D_M$ be a sequence converging to $\opq$ subhorocyclicly. 
Let $g_k=G_{\lambda_k, a_k}$ where $\frac{(\lambda_k-2)^2-a_k^2}{\lambda_k^2}=\sigma_k=\Phi_x(\lambda_k)$ and with corresponding dynamical labeling $\psi_k(-\opq/2)=1$.

Assume that $g^i_{k}\to \infty$ locally uniformly on $\C^*$ when $0<i<q$, and $g_{k}^q\to G_T$, $T\in\CC$, locally uniformly on $\C^*$. Then $T\in\C$, $1\in B_T$ and:\\
Either there is a smallest $N=mq+\ell>0$ so $g_k^N(-1)\in V^0_k$, in which case:
\begin{itemize}
\item[\bf{A.}] if $0<\ell<q$ then $G_T^{m'}(-1)=0$ for some $0<m'\leq m$
\item[\bf{B.}] if $\ell=0$ and $m>1$, then $G_T^{m'}(-1)=0$ for some $m' < m$
\item[\bf{C.}] if $\ell=0$ and $m=1$ then $x\in B^p$ and $G_T(-1)\in B_T(1)$. 
\end{itemize}
Or, there is a smallest $N=mq+\ell>0$ so that $g_k^N(-1)=0$, in which case:
\begin{itemize}
\item[\bf{D.}] $G_T^{m'}(-1)=0$ for some $m'\leq m$.
\end{itemize}
\end{lemma}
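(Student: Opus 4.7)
The strategy proceeds in three stages. First, I extract a subsequential limit $\Psi$ of the conjugacies $\Psi_k$ on $V^0$. Second, I use its functional equation to derive $T\in\C$ and $1\in B_T$. Third, I translate the combinatorial hypothesis about when the $G_k$-orbit of $-1$ first enters $V_k^0$ (or hits $0$) into the corresponding statement about the $G_T$-orbit of $-1$, by means of the identity
\[
G_k^n(-1)=\Psi_k(\mathrm{P}^{n-1}(x)),\qquad n\geq 1,
\]
which follows from the normalization $\Psi_k(-\opq/2)=1$ (forcing $G_k(-1)=\Psi_k(x)$) together with the conjugacy $\Psi_k\circ\mathrm{P}=G_k\circ\Psi_k$.

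For the first two stages: since $V^0\subset B^0$ is simply connected and each $\Psi_k|_{V^0}:V^0\to V_k^0\subset\C^*$ is univalent with $\Psi_k(-\opq/2)=1$, a branch of $\log\Psi_k$ vanishing at $-\opq/2$ is univalent and the family is normal. Along a subsequence, $\Psi_k\to\Psi$ locally uniformly on $V^0$ with $\Psi$ univalent (Hurwitz). Passing $\Psi_k\circ\mathrm{P}^q=G_k^q\circ\Psi_k$ to the limit using $G_k^q\to G_T$ gives $\Psi\circ\mathrm{P}^q=G_T\circ\Psi$ on $V^0$. Evaluating at $-\opq/2$ yields $G_T(1)=\Psi(\mathrm{P}^q(-\opq/2))\in\C$, so $T\in\C$. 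Since each $\Psi_k$ extends continuously with $\Psi_k(0)=\infty$ (the attracting fixed point of $G_k$), $\Psi$ extends similarly, and $\mathrm{P}^{nq}(-\opq/2)\to 0$ implies $G_T^n(1)=\Psi(\mathrm{P}^{nq}(-\opq/2))\to\infty$. Hence $1\in B_T$ and $\Psi(V^0)\subset B_T(1)$.

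For Case C ($m=1$, $l=0$, $n=q$): $\mathrm{P}^{q-1}(x)\in V^0\subset B^0$, and the basin rotation $\mathrm{P}(B^j)=B^{j+p\bmod q}$ forces $x\in B^p$; passing to the limit, $G_T(-1)=\lim G_k^q(-1)=\Psi(\mathrm{P}^{q-1}(x))\in\Psi(V^0)\subset B_T(1)$. For Cases A, B, and D, I argue by contradiction: assume $G_T^{m'}(-1)\neq 0$ for every $m'$ in the relevant range (all $m'\leq m$ in Cases A and D, all $m'<m$ in Case B). By induction on $m'$, using $G_k^q\to G_T$ locally uniformly on $\C^*$, this propagates to $G_k^{m'q}(-1)\to G_T^{m'}(-1)\in\C^*$. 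In Case A, $G_k^n(-1)=G_k^l(G_k^{mq}(-1))\to\infty$ by the hypothesis $G_k^l\to\infty$ on $\C^*$, contradicting $G_k^n(-1)\in V_k^0$, which is bounded (by Koebe applied to $\log\Psi_k$ at $-\opq/2$ on compact subdomains of $V^0$). Case D is analogous: $G_k^n(-1)\to\infty\neq 0$.

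The main obstacle is Case B, where $l=0$ and boundedness alone is not enough. There I will use Carath\'eodory convergence $V_k^0\to\Psi(V^0)$ together with the minimality of $n=mq$ to deduce that $G_T^{m-1}(-1)\notin\Psi(V^0)$ while $G_T^m(-1)\in\overline{\Psi(V^0)}$. Forward invariance of $\Psi(V^0)$ under $G_T$ (from $\Psi\circ\mathrm{P}^q=G_T\circ\Psi$ and $\mathrm{P}^q$ preserving $V^0$), combined with the structure of $\partial\Psi(V^0)\cap B_T(1)$ as a boundary of a fundamental region for $G_T$ in the immediate parabolic basin, should then force $G_T^{m-1}(-1)$ to be the unique finite pre-image of $\infty$ in $B_T(1)$, namely the pole $0$ of $G_T$, giving the required $m'=m-1<m$.
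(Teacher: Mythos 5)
Your overall strategy matches the paper's: pass to a limit $\Psi$ of the conjugacies $\Psi_k$ on $V^0$, use $\Psi\circ\mathrm{P}^q=G_T\circ\Psi$ and the normalization $\Psi_k(-\opq/2)=1$ to get $T\in\C$ and $1\in B_T$, and then exploit $G_k^n(-1)=\Psi_k(\mathrm{P}^{n-1}(x))$ to translate the combinatorics. Your treatment of Cases~A, C and D is essentially the same as the paper's. Two remarks on the details, though. First, $V_k^0$ is not bounded (its closure contains the attracting fixed point $\infty$ of $G_k$); what is true, and what your argument actually needs, is that $\Psi_k$ is bounded on compact subsets of $V^0$ uniformly in $k$, which follows from the normality you established in Stage~1 applied at the fixed point $\mathrm{P}^{n-1}(x)\in V^0$. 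The Koebe phrasing should be corrected to that.

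The genuine gap is Case~B. Your step ``$G_T^m(-1)\in\overline{\Psi(V^0)}$'' does not follow from $G_k^{mq}(-1)\in V_k^0$ and $G_k^{mq}(-1)\to G_T^m(-1)$ together with Carath\'eodory convergence of the pointed disks alone; a sequence of points $w_k\in V_k^0$ may converge to a point outside $\overline{V_\infty}$ (think of disks with a shrinking appendage). What the paper uses here is that the \emph{hyperbolic} distance $d_k(G_k^q(1),G_k^{mq}(-1))$ is independent of $k$, because $\Psi_k$ conjugates, and then \cite[Thm.~5.3]{CTM} forces the limit into $V_\infty$, not just into its closure. Without that device you only know $G_T^{m-1}(-1)\notin V_\infty$ but not where $G_T^m(-1)$ sits, so the subsequent argument has nothing to bite on. And your closing appeal to ``the structure of $\partial\Psi(V^0)\cap B_T(1)$ as a boundary of a fundamental region... should then force'' is not an argument. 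The paper instead runs a contradiction: assuming $G_T^{m'}(-1)\neq 0$ for all $m'<m$, it uses (i) the hyperbolic-distance argument to place $G_T^m(-1)\in V_\infty$, (ii) the structural fact that the two $G_T$-preimages of any point of $V_\infty$ lie in $V_\infty$ (inherited from the corresponding property of $V_M(x)$), to deduce $G_T^{m-1}(-1)\in V_\infty$, and (iii) Carath\'eodory convergence plus $G_k^{(m-1)q}(-1)\to G_T^{m-1}(-1)$ to place $G_k^{(m-1)q}(-1)$ in $V_k^0$ for $k$ large, contradicting minimality of $n=mq$. So you need to import the hyperbolic-distance lemma from \cite{CTM} and to state and justify the preimage property of $V_\infty$; as written, Case~B is not proved.
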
  
We remind the reader that the assumptions on the convergence of $g_k^i$ and $g_k^q$ are indeed relevant, cf. Lemma \ref{BHClemma}. 
\begin{proof}
As described above, let $\psi_k:\mathcal{U}_{M}(x)\to V_k$ denote the dynamical marker map corresponding to the marking $x$ rel $\mathrm{P}$ of $g_k$, and with the corresponding dynamical labeling $\psi_k(-\omega^2/4)=g_k(1)$ (equivalently $\psi_k(-\omega/2)=1$) and $\psi_k(x)=g_{k}(-1)$.

Either $-\omega/2\in \mathcal{U}^0$ (for $x\notin \widehat{\Omega}$) or $-\omega/2\in \partial \mathcal{U}^0$ (for $x\in \widehat{\Omega}$) in which case the domain $\mathcal{U}^0$ can be slightly extended to include $-\omega/2$, in such a way that $\mathcal{U}^0$ is still a disk, and so that $\psi_k$ is still univalent and conjugates $\mathrm{P}^q$ to $g_k^q$. Hence, we can assume that $-\omega/2\in \mathcal{U}^0$.

Notice that by the properties of the marker map $\psi_k$ and its domain $\mathcal{U}_{M}(x)$ we have $V^0_k:=\psi_k(\mathcal{U}^0)\subset \C^*$ for all $k$, and  
\begin{equation}\label{eqnGk1}
    g_k^q(1)=g_k^{q}(\psi_k(-\omega/2))=\psi_k(\mathrm{P}^q(-\omega/2))\in V^0_{k},
\end{equation}
\begin{equation}\label{eqnGkminus1}
    g_k^q(-1)=g_k^{q-1}(g_k(-1))=g_k^{q-1}(\psi_k(x))=\psi_k(\mathrm{P}^{q-1}(x))\in V_{k},
\end{equation}    
for all $k$.

It follows from the general theory of univalent functions (passing to a subsequence if necessary) that the sequence of functions $(\psi_k)_k$ converges uniformly on compact subsets of $\mathcal{U}^0$ to a limiting function $\psi_\infty: \mathcal{U}^0\to \CC$, which is either univalent or constant. Moreover, $\psi_\infty(-\omega/2)=1$ because $\psi_k(-\omega/2)=1$ for all $k$. Since $\psi_k\circ \mathrm{P}^q=g_k^q\circ \psi_k$ on $\mathcal{U}^0$ and since $g_k^q\to G_T$ locally uniformly on $\C^*$ by assumption, $\psi_\infty\circ \mathrm{P}^q=G_T\circ \psi_\infty$ on $\mathcal{U}^0$, as long as $\psi_\infty(z)\neq 0,\infty$. Therefore, we have (in both cases) $2+T=G_T(1)=G_T(\psi_\infty(-\omega/2))=\psi_\infty(\mathrm{P}^q(-\omega/2))$.

Assume $\psi_\infty$ is constant. Since $\psi_\infty(-\omega/2)=1$, $\psi_\infty$ is the map $z\mapsto 1$, whence $T=-1$ because $2+T=\psi_\infty(\mathrm{P}^q(-\omega/2))=1$. Then by Lemma \ref{BHClemma} there are $q$-cycles $\langle z\rangle_k$ of $g_k$ with eigenvalue $\rho_k\to 0$. But $\sigma_k\in \mathcal{R}^{\lambda_k}$, which means that both critical points of $g_k$ are in the basin of attraction of the fixed point at $\infty$, with eigenvalue $\lambda_k\to\omega$. By classical results of Fatou and Julia, each attracting or parabolic basin contains at least one critical point, hence $g_k$ can have no other attracting cycles, for a contradiction.

So $\psi_\infty$ is univalent. The univalence of $\psi_\infty$ implies (as a consequence of the equivalence between Riemann maps and pointed disks, given by the Carath\'eodory kernel Theorem, see f.ex. \cite[Thm. 5.1]{CTM}) that the sequence of pointed disks $(V^0_{k}, g_k^q(1))$ converges in the Carath\'eodory topology to a pointed disk $(V^0_{\infty}, G_T(1))$, where $V^0_{\infty}\subset \C^*$ since $V^0_{k}\subset \C^*$ for all $k$. In particular, $T\in\C$ since $2+T=G_T(1)\in V^0_{\infty}$.

Since $V^0_{\infty}\subset \C^*$ we have $\psi_\infty\circ\mathrm{P}^q=G_T\circ\psi_\infty$ on $\mathcal{U}^0$, and it follows that $V^0_{\infty}$ is a subset of $B_T$, the parabolic basin of $\infty$ for $G_T$, in particular $1\in V^0_{\infty}\subset B_T$.

From \cite[Thm. 5.3]{CTM} if the distance $d_k(g_k^q(1),w_k)$ is uniformly bounded (in $k$) in the hyperbolic metric $d_k$ on $V^0_{k}$, and $w_k\to w$, then $w \in V^0_\infty$. In particular, this implies that if there exists $N>0$ so that $g_k^N(-1)\in V^0_{k}$ (equivalently $\mathrm{P}^{N-1}(x)\in \mathcal{U}^0)$ and $g_k^N(-1)$ converges to some point $w$, then $w \in V^0_\infty$, since the hyperbolic distance 
\begin{equation}\label{hypdistance}
d_k(g_k^q(1),g_k^{N}(-1))=d(\mathrm{P}^q(-\opq/2),\mathrm{P}^{N-1}(x)) \text{ for all } k,
\end{equation}
where $d$ denotes the hyperbolic distance on $\mathcal{U}^0$.

If $x\in \inter(\mathcal{U}_{M}(x))$, then there exists a least $N>0$ so that $g_k^N(-1)\in V^0_{k}$. If $N<q$ then by assumption $g_k^N(-1)\to \infty$ in contradiction to Equation (\ref{hypdistance}). So $N=mq+\ell$ for some $m\geq 1$ and $0\leq \ell<q$. We remark that in all cases, either $g_k^{mq}(-1)\to G^m_T(-1)$ or there exists a minimal $0<m'<m$ so that $g_k^{m'q}(-1)\to 0=G^{m'}_T(-1)$.

Part A: If $0<\ell<q$ then $g_k^N(-1)=g_k^\ell\circ (g_k^q)^m(-1)$, and $g_k^N(-1)\not\to\infty$, again because of Equation (\ref{hypdistance}).
On the other hand $g_k^\ell\to \infty$ locally uniformly on $\C^*$, whence there is a smallest $0<m'\leq m$ so that $g_k^{qm'}(-1)\to 0=G_T^{m'}(-1)$.

Part B: If $\ell=0$ and $m>1$ then there is a smallest $m'<m$ so that $g_k^{qm'}(-1)\to 0=G_T^{m'}(-1)$. If not, then $g_k^N(-1)=g_k^{qm}(-1)\to G_T^{m}(-1)\in V^0_\infty$, by the hyperbolic argument cf. Equation (\ref{hypdistance}). Due to the way the domain $\mathcal{U}_{M}(x)$ is constructed, $\mathrm{P}^{qm-1}(x)$ is at least $qm-1$ levels deeper (with respect to iteration by $\mathrm{P}$) than $\partial \mathcal{U}^0$. Consequently, the two preimages of $G_T^m(-1)$ by $G_T$ belong to $V^0_\infty$, $G_T^{-1}(\{G_T^m(-1)\})\in V^0_\infty$; they correspond to the two points of $g_k^{-q}(\{g_k^{mq}(-1)\})$ in $V_k^0$. 

One of these pre-images, call it $y$, must be in the forward orbit of -1, i.e. $y=G_T^{m-1}(-1)\in V^0_\infty$. But then $g_k^{q(m-1)}(-1)\in V^0_{k}$ for $k$ sufficiently large, in contradiction to the assumption that $n=qm$ was the first such iterate.

Part C: If $\ell=0$ and $m=1$ then $g_k^{q}(-1) \in V^0_{k}$ (equivalently $\mathrm{P}^{q-1}(x)\in \mathcal{U}^0$). First note that if $x\in B^p$, then $\mathrm{P}^{q-1}(x)\in \mathcal{U}^0$ and hence $g_k^{q}(-1) \in V^0_{k}$ by Equation (\ref{eqnGkminus1}), and again by the hyperbolic argument above, $g_k^{q}(-1)\to G_T(-1)\in V^0_\infty\subset B_T(1)$.

To see that this is the only possibility, assume to the contrary that 
$x\notin B^p$ whence $\mathrm{P}^{-1}(\{x\})\cap B^0=\emptyset$. In this case $\psi_k$, and consequently $\psi_\infty$, would extend to a univalent conjugacy on all of $B^0$. Either $\mathrm{P}^{q-1}(x)=\mathrm{P}^q(-\omega/2)$ whence $g_k^q(-1)=g_k^q(1)$ for all $k$ by Equations (\ref{eqnGk1}) and (\ref{eqnGkminus1}), implying $G_T(-1)=G_T(1)$ for a contradiction. Or $\mathrm{P}^{q-1}(x)\neq \mathrm{P}^q(-\omega/2)$ and there are two distinct pre-images of $\mathrm{P}^{q-1}(x)$ by $\mathrm{P}^q$ in $B^0$, which, since $\psi_\infty$ is a univalent conjugacy on $B^0$  and $\psi_\infty(\mathrm{P}^{q-1}(x))=G_T(-1)$, would imply that there are two distinct pre-images of $G_T(-1)$ by $G_T$, for a contradiction.

Part D: If $x\in \partial \mathcal{U}_{M,}(x)$, then there exists a smallest $N=qm+\ell>0$ so that $g_k^N(-1)=0$, and the result is obtained in much the same way as before. If $0<\ell<q$ then $g_k^N(-1)=g_k^\ell\circ g_k^{qm}(-1)=0$, but on the other hand $g_k^\ell\to \infty$ locally uniformly on $\C^*$, whence there is a smallest $m'\leq m$ such that $g_k^{qm'}(-1)\to 0=G_T^{m'}(-1)$.
If $\ell=0$, then either there is a smallest $m'<m$ such that $g_k^{qm'}(-1)\to 0=G_T^{m'}(-1)$ or $0=g_k^N(-1)=g_k^{qm}(-1)\to G_T^m(-1)$, whence $G_T^m(-1)=0$.
\end{proof}

\begin{proof}[Proof of Theorem \ref{thm:convergence}]  
Let $x$, $M$, and $(\lambda_k)_k$, with $\lambda_k\to\omega$, be as in the Theorem and consider the $x$-horizontal sequence $(\lambda_k,\sigma_k)=(\lambda_k,\Phi_x(\lambda_k))$. 
Any map $f_k=f_{\lambda_k,\sigma_k}\in {\bfnf}_{\lambda_k,\sigma_k}$ has an attracting fixed point with eigenvalue $\lambda_k$ and $f_k$ is dynamically marked by $x\in \mathcal{A}^*_M\setminus S^p$ rel $\mathrm{P}$, whence it is dynamically marked by $x_k=h_{\lambda_k}(x)$ rel $\mathrm{P}_\lambda$. Since $x\notin S^p$, $x_k=h_{\lambda_k}(x)\notin \overline{S^p_{\lambda_k}}$, so by Corollary \ref{cor:fatcycles}(2) there exist repelling $q$-cycles $\langle z\rangle_k$ of $f_k$ with eigenvalue $\rho_k\to 1$.

We first show that $(\sigma_k)_k$ is bounded. 
Assume to the contrary that $(\sigma_k)_k$ is unbounded, then, on passing to a subsequence if necessary, $\sigma_k\to\infty$ and the fixed point eigenvalues $\lambda_k, \mu_k, \nu_k$ tend to $\opq, \omega_{-p/q}, \infty$ (cf. Section \ref{sect:divergenceinfty}). We can without loss of generality choose representatives $f_k= G_{\lambda_k, a_k}$, where $a_k\in \C$ so that $a_k^2=(\lambda_k-2)^2-\sigma_k\lambda_k^2$ and with dynamical labeling $\psi_k(-\omega^2/4)=G_{\lambda_k, a_k}(1)$ and $\psi_k(x)=G_{\lambda_k, a_k}(-1)$, where $\psi_k: \mathcal{U}_{M}(x)\to V_k$ denotes the dynamical marker map for $f_k$ rel $\mathrm{P}$, as 
described above.

It then follows from Lemma \ref{BHClemma}, passing to a further subsequence if necessary, that 
\[
f_k^i \to
\begin{cases}
\infty & \text{ for } 1\leq i<q \\
G_T  & \text{ for } i=q
\end{cases}
\quad \text{ locally uniformly on } \C^* \text{ as } \lambda_k\to\opq,
\]
for some $T\in\CC$.
Lemma \ref{lemma:dynamicsinfinity} then implies that $T\in\C$ and therefore $T=0$ because of the last part of Lemma \ref{BHClemma}, since there are repelling $q$-cycles $\langle z\rangle_k$ of $f_k$ with eigenvalues $\rho_k\to 1$.

But the map $G_0$ has a double parabolic fixed point at $\infty$, its basin consists of two fixed components, each containing a critical point, 1 or -1, both having infinite forward orbit. So no iterate of -1 by $G_0$ is 0 or in the basin component containing 1, which is in contradiction to all of the possible cases in Lemma \ref{lemma:dynamicsinfinity}. Therefore $(\sigma_k)_k$ must be  bounded.

Since the sequence $(\sigma_k)_k$ is bounded, a subsequence can be extracted for which $(\lambda_k,\sigma_k)\to (\opq,\sigma)$ for some $\sigma\in\C$. We prove that any limit point $\sigma$ of such a convergent subsequence is in $\mathcal{R}^{\opq}$ and is dynamically marked by $x$ rel $\mathrm{P}$.

Without loss of generality, we choose the representatives $f_k= G_{\lambda_k, a_k}\in {\bfnf}_{\lambda_k,\sigma_k}$ as before.
Then $a_k\to a$ as $k\to\infty$, for $a\in \C$ with $a^2=(\omega-2)^2-\sigma\omega^2$, and $f_k$ converges uniformly on $\CC$ to $f=G_{\omega, a}\in {\bfnf}_{\omega,\sigma}$.

From the uniform convergence $f_k\to f$, the functional equation $\psi_{k}\circ\mathrm{P}=f_k\circ\psi_{k}$ and the general theory of univalent maps, it follows that the sequence of marker maps $\psi_k$ converges to a limiting map $\psi_\infty$, with $\psi_\infty(0)=\infty$, $\psi_\infty(-\opq/2)=1$ and $\psi_\infty(x)=f(-1)$. The convergence is locally uniform on each component of $\inter(\mathcal{U}_{M}(x))$ and pointwise on $\mathcal{U}_{M}(x)$. Moreover, the limiting map $\psi_\infty$ is injective, univalent on $\inter(\mathcal{U}_{M}(x))$, and obeys the functional equation:
\begin{equation}\label{eqn:funceqninf}
\psi_{\infty}\circ\mathrm{P}=f\circ\psi_{\infty} \quad \text{on } \mathcal{U}_{M}(x).
\end{equation}

To see that $\sigma \in \mathcal{R}^{\opq}$, we argue in a way similar to the proof of Lemma \ref{lemma:dynamicsinfinity}. Recall that $\mathcal{U}^j=\mathcal{U}_M(x)\cap B^j$ and $V_k^j=\psi_k(\mathcal{U}^j)$ and let $V^j_\infty$ denote the Carath\'eodory limit of $V_k^j$ as in the proof. Now, either there exists $N>1$ so that $\mathrm{P}^{N-1}(x)=0$, and equivalently $f_k^N(-1)=\infty$ for all $k$, whence also $f^{N}(-1)=\infty$. Or there exists $N>0$ so that $\mathrm{P}^{N-1}(x)\in \mathcal{U}^0$, equivalently $f_k^N(-1)\in V_k^0$ for all $k$, whence $f^{N}(-1)\in  V^0_\infty$. Since $V^0_\infty$ is a subset of the parabolic basin of $\infty$ for $f$, it follows that $\sigma \in \mathcal{R}^{\omega}$ in both cases.

The limit map $\psi_{\infty}$ can be extended to the larger domain $\mathcal{U}(x)=\mathrm{P}^{-n}(\widehat{\mathcal{P}}_0)\cap \mathrm{P}^{-l}(\widehat{\Omega})$, as defined in Section \ref{sectdynmarkrelPonRomega}, in such a way that the functional equation (\ref{eqn:funceqninf}) still holds on $\mathcal{U}(x)$. Namely, for each compact set $L\subset\inter(\mathcal{U}(x))$ there is an $M'>M$ such that $L\subset \inter(\mathcal{U}_{M'}(x))$ and since $\lambda_k\to\omega$ subhorocyclicly, $\lambda_k\in D_{M'}$ for $k$ sufficiently large. Then $\psi_k$ converges locally uniformly on $\inter(\mathcal{U}_{M'}(x))$ to a limit map that extends $\psi_\infty$ to $\mathcal{U}_{M'}(x)$.

So we have $\psi_\infty:\mathcal{U}(x)\to\CC$, which is injective, univalent in $\inter(\mathcal{U}(x))$, conjugates $\mathrm{P}$ to $f$ and such that $\psi_\infty(-\omega^2/4)=f(1)$ and $\psi_\infty(x)=f(-1)$. It remains to show that $\psi_\infty$ is continuous on $\mathcal{U}(x)$, in order to establish that $\psi_\infty$ is a dynamical marker map for $f$, which then implies that $f$ is dynamically marked by $x$.

Since $\psi_\infty$ is univalent on $\inter(\mathcal{U}(x))$, it is only necessary to show continuity at points in $\partial \mathcal{U}(x)\cap \mathcal{U}(x)$. Consider first a sequence $z_{k'}\in \mathcal{U}(x)$ so that $z_{k'}\to 0$, then $z_{k'}$ is eventually in $\mathcal{U}(x)\cap \Omega$, we can without loss of generality assume it is in a tile $\Omega^j$. Recall that $\phi$ denotes the Fatou coordinate for $\mathrm{P}$ and let $\varphi$ denote the Fatou coordinate for $f$ as in Section \ref{sectdynmarkrelPonRomega}. They are normalized so $\phi(-\omega/2)=0=\varphi(1)$. Recall from Sections \ref{markingRbyP} and \ref{sectdynmarkrelPonRomega} that $\H_0\subseteq \phi(\inter(\mathcal{U}(x))\subseteq \H_{-n/q}$, where $n=n(x)$ is minimal so that $\mathrm{P}^n(x)\in \widehat{\mathcal{P}}_0$.

Then, since $\psi_\infty$ conjugates  
$\mathrm{P}^q$ on $\mathcal{U}^j$ to $f^q$ on $V_\infty^j$, and because of the chosen normalization, $\varphi\circ \psi_\infty=\phi$ on $\mathcal{U}^j$, so that $\varphi\circ\psi_\infty(z_{k'})=\phi(z_{k'})\to\infty$ in $\H_{-n/q}$ as $k'\to\infty$. Hence $\psi_\infty (z_{k'})\to z$, where $z$ is $\infty$ or one of its pre-images under $f$, and since $\psi_\infty(\Omega^j\cap \mathcal{U}(x))$ is forward invariant under $f^q$, it follows that $z=\infty$. Thus, $\psi_\infty(z_{k'})\to \infty=\psi_\infty(0)$ as $z_{k'}\to 0$.

Next, suppose $z\in \mathcal{U}(x)$ is a pre-image of 0, i.e. there exists a minimal  $l>0$ such that  $\mathrm{P}^{l}(z)=0$, and consider a sequence $\mathcal{U}(x)\ni z_{k'}\to z$, which is necessarily contained in $\inter(\mathcal{U}(x))$ for $k'$ sufficiently big. There are $q$ components of  $\inter(\mathcal{U}(x))$ meeting at $z$, we can again assume, without loss of generality, that the sequence $(z_{k'})_{k'}$ is contained in one of them. Thus, it follows from the continuity at 0 and the functional equation (\ref{eqn:funceqninf}) that $\psi_\infty(z_{k'})\to z_\infty \in f^{-l}(\{\infty\})$ as $z_{k'}\to z$. To see that $z_\infty=\psi_\infty(z)$, let $D(z)$ be a sufficiently small disc around $z$, so that the pre-image by $\phi$ of the positive real axis has $q$ components in $\mathcal{U}(x)\cap D(z)$, with $z$ as the only common boundary point. Let $Y$ denote the closure of this pre-image:
\[Y=\overline{\phi^{-1}(\R_+)\cap \mathcal{U}(x) \cap D(z)}.\] 
Note that $Y$ is closed, connected and that $Y\setminus \inter(\mathcal{U}(x))=\{z\}$. Let $Y_k=\psi_k(Y)$ (where $k$ here is the index from the convergence $\lambda_k\to\omega$), for context we remind the reader that $Y_k$ is a pre-image at $\psi_k(z)$ of a twig for the $p/q$ star for $f_k$. In particular, $Y_k$ is closed and connected, since $\psi_k$ is a homeomorphism, whence on passing to a subsequence if necessary, $Y_k$ converges in the Hausdorff topology to a closed and connected $Y_\infty$. Since $\psi_k(z)\to\psi_\infty(z)$ it follows that $\psi_\infty(z)\in Y_\infty$, and it is the only point in $Y_\infty$ which is not in the parabolic basin of $\infty$ for $f$. It follows that for a sequence $Y \ni z_{k'}\to z$, $Y_\infty \ni \psi_\infty(z_{k'})\to \psi_\infty(z)$, whence $z_\infty=\psi_\infty(z)$.

Since $\psi_\infty$ is continuous, it is a dynamical marker map for $f=G_{\omega, a}\in {\bfnf}_{\omega,\sigma}$ and thus the limit point $\sigma$ is dynamically marked by $x$ rel $\mathrm{P}$. Assume $\sigma'$ is the limit point of another convergent subsequence of $(\sigma_k)_k$. But then $\sigma'=\sigma$ by Proposition \ref{thm:injective1}, whence $\sigma_k\to\sigma$, where $\sigma$ is the unique parameter in $\mathcal{R}^\omega\setminus\mathcal{D}^\omega$ dynamically marked by $x$ rel $\mathrm{P}$.
\end{proof} 

\begin{proof}[Proof of Theorem \ref{thm:convergenceunbound}]
Let $x$, $M$, and $(\lambda_k)_k$, with $\lambda_k\to\omega$, be as in the Theorem and consider the $x$-horizontal sequence $(\lambda_k,\sigma_k)=(\lambda_k,\Phi_x(\lambda_k))$. Any map $f_k=f_{\lambda_k,\sigma_k}\in {\bfnf}_{\lambda_k,\sigma_k}$ has an attracting fixed point with eigenvalue $\lambda_k$ and $f_k$ is dynamically marked by $x\in \mathcal{A}^*_M\cap S^p$ rel $\mathrm{P}$, whence it is dynamically marked by $h_{\lambda_k}(x)$ rel $\mathrm{P}_\lambda$. Since $x\in S^p$, $h_{\lambda_k}(x)\in S^p_{\lambda_k}$, so by Corollary \ref{cor:fatcycles}(1) there exist repelling fixed points $\zeta_k=f_k(\zeta_k)$ with eigenvalue $\mu_k\to \omega_{-p/q}$. Since $\lambda_k\to\opq$, it follows from the discussion in Section \ref{sect:divergenceinfty} that $\sigma_k\to\infty$.

Now consider representatives $f_k= G_{\lambda_k, a_k}$, dynamically marked by $x$ rel $\mathrm{P}$, where $a_k\in \C$ so that $a_k^2=(\lambda_k-2)^2-\sigma_k\lambda_k^2$ and with dynamical labeling $\psi_k(-\omega^2/4)=f_k(1)$ and $\psi_k(x)=f_k(-1)$, where $\psi_k: \mathcal{U}_{M}(x)\to V_k$ denotes the corresponding dynamical marker map. We consider again $\psi_k$ restricted to $\mathcal{U}_{M}(x)$ so that all $\psi_k$ have the same domain, as described in the beginning of this section, and used in the proofs of Lemma \ref{lemma:dynamicsinfinity} and Theorem \ref{thm:convergence}.

It follows from Lemma \ref{BHClemma} that $f_k^i \to \infty$ locally uniformly on $\C^*$ for $0<i<q$, in particular for $i=1$. It also follows from Lemma \ref{BHClemma}, that any subsequence of the sequence $(f_k)_k$ has a further subsequence so that $f_k^q \to G_T$ locally uniformly on $\C^*$, for some $T\in\CC$, as $\lambda_k\to\opq$. 

We consider now such a rescaled limit $G_T$, for a subsequence so that $f_k^q \to G_T$. Then it follows from Lemma \ref{lemma:dynamicsinfinity} that $T\in \C$ and in each of the cases $\sigma=1-T^2\in \mathcal{R}^{1}$.

Recall the surjective, continuous map $h_1: \overline{S^0}\cap \mathcal{A} \to \widetilde{A}_1$, introduced in Section \ref{mainresult2rescaling}, with $h_1(0)=0$ and $h_1(-\omega/2)=-1/2$, and such that 
\begin{equation}
h_1\circ\mathrm{P}^q = \mathrm{P}_{1}\circ h_1 \quad \text{on } \overline{B^0}\cap \mathcal{A}.
\end{equation}
By construction $h_1$ is a homeomorphism on $\overline{B^0}\cap \mathcal{A}$ and holomorphic on $B^0$. 

The proof now uses arguments similar to the proof of Lemma \ref{lemma:dynamicsinfinity} and Theorem \ref{thm:convergence}, and we will do it in less detail here. Let $\widetilde{\mathcal{U}^0}=\mathcal{U}_M(x)\cap \overline{B^0}$. In particular, recall that, passing to a subsequence if necessary, the sequence of marker maps $\psi_k$ converges on $\widetilde{\mathcal{U}^0}$ to a limiting map $\psi_\infty: \widetilde{\mathcal{U}^0}\to \C^*$ with $\psi_\infty(0)=\infty$ and $\psi_\infty(\mathrm{P}^q(-\omega/2))=G_T(1)$. The convergence is locally uniform on $\mathcal{U}^0$ and pointwise on $\widetilde{\mathcal{U}^0}$, and the limiting map is injective and univalent on $\mathcal{U}^0$. Moreover, $\psi_\infty$ is a conjugacy of $\mathrm{P}^q$ to $G_T$ and  $\psi_\infty(\mathcal{U}^0)=V^0_\infty\subset B_{T}$. 
By arguments similar to the proof of Theorem \ref{thm:convergence}, $\psi_\infty$ is continuous on $\widetilde{\mathcal{U}^0}$.

Let $\widetilde{V^0_\infty}=\psi_\infty(\widetilde{\mathcal{U}^0})$, $U_1=h_1(\mathcal{U}^0)$, 
$\widetilde{U_1}=h_1(\widetilde{\mathcal{U}^0)}$ and let $h_1^{-1}:\widetilde{U_1}\to \widetilde{\mathcal{U}^0}$ denote the inverse of the restriction of $h_1$ to 
$\widetilde{\mathcal{U}^0}$.
Then $\psi:=\psi_\infty\circ h_1^{-1}:\widetilde{U_1}\to \widetilde{V^0_\infty}$ is a homeomorphism, holomorphic on $U_1$, conjugates $\mathrm{P}_1$ to $G_T$, and has 
$\psi(0)=\psi_\infty(0)=\infty$ and 
$\psi(-1/4)=\psi_\infty(\mathrm{P}^q(-\omega/2))=G_T(1)$. 

If $x\in B^p$ then $\mathrm{P}^{q-1}(x)\in \mathcal{U}^0$ (equivalently $f^q_k(-1)\in V_k^0$), which corresponds to case C of Lemma \ref{lemma:dynamicsinfinity}. In this case $h_1(\mathrm{P}^{q-1}(x))\in U_1$ and 
\[\psi(h_1(\mathrm{P}^{q-1}(x))=\psi_\infty(\mathrm{P}^{q-1}(x))=G_T(-1)\in V^0_\infty\subset B_T(1).\]

If $x\in S^p\setminus B^p$, then there are two cases. Either there is a smallest $N=mq+\ell>q$ such that $\mathrm{P}^{N-1}(x)\in \mathcal{U}^0$ (equivalently $f_k^N(-1)\in V_k^0$), which corresponds to case A or B of Lemma \ref{lemma:dynamicsinfinity}. Or there is a smallest $N=mq+\ell\geq q$ such that $\mathrm{P}^{N-1}(x)=-\lambda$ (and hence $\mathrm{P}^{N}(x)=0$), and equivalently $f_k^N(-1)=0$ for all $k$, which corresponds to case D of Lemma \ref{lemma:dynamicsinfinity}.
In each case there exists a minimal $1\leq m'\leq m$ such that $f_k^{qm'}(-1)\to G^{m'}_T(-1)=0$, and then  $f_k^{qn'}(-1)\to G^{n'}_T(-1)$ for $n'<m'$, in particular $\psi_k(\mathrm{P}^{q-1}(x))=f_k^{q}(-1)\to G_T(-1)$, where $G_T(-1)$ is a pre-image of 0 by $G_T^{m'-1}$.

As in the definition of $h_1$ (in Section \ref{mainresult2rescaling}) let $z_0$ denote an iterated pre-image of 0 (by $\mathrm{P}$) in $\widetilde{\mathcal{U}^0}\subset \overline{B^0}$. Every such pre-image of 0 separates $S^0$ into $q$ connected components, recall that $C(z_0)$ denotes the union of components not containing $-\omega/2$ and let $\overline{C}(z_0)=C(z_0)\cup \{z_0\}$, see Figure \ref{fig_h1}. Recall that $h_1(z)=h_1(z_0)$ for all $z\in \overline{C}(z_0)$.

Since $x\in S^p\setminus B^p$, $\mathrm{P}^{q-1}(x)\in \mathcal{U}_M(x)\cap \overline{C}(z_0)$ for some $C(z_0)$ at an iterated pre-image $z_0\in \widetilde{\mathcal{U}^0}$ of 0, whence $h_1(\mathrm{P}^{q-1}(x))=h_1(z_0)$, see Figure \ref{fig_h1}. 

\paragraph{Claim.} The restriction of $\psi_k$ to $\mathcal{U}_M(x)\cap \overline{C}(z_0)$ converges (pointwise) to the constant map $\psi_\infty|_{\mathcal{U}(x)\cap \overline{C}(z_0)}\equiv \psi_\infty(z_0)$.
  \begin{proof}[Proof of claim]
If not, then there would be a component $C'$ of $\inter(\mathcal{U}_M(x))\cap C(z_0)$, so that (a subsequence of) $\psi_k|_{C'}$ converges locally uniformly to a univalent map $\psi_\infty|_{C'}$. By construction of the maps $\psi_k$ and by the properties of Carath\'eodory convergence, $\psi_\infty(C')\subset\C^*$ and $\psi_\infty(C')\cap V_\infty^0=\emptyset$. The component $C'$ has a center denoted $z_1$: the point in $C'$ which first iterates to the critical point $-\omega/2$, that is $\mathrm{P}^{N_1}(z_1)=-\omega/2$ for $N_1=m_1q+\ell_1\leq N=mq+\ell$, and equivalently $f_k^{N_1}(\psi_k(z_1))=1$. Since $\psi_\infty(z_1)$ is contained in the open set $\psi_\infty(C')$, which does not intersect $V_\infty^0$, it follows that $\psi_\infty(z_1)\notin\widetilde{V^0_\infty}$.
By arguments similar to those in Lemma \ref{lemma:dynamicsinfinity} there either exists $m_1'\leq m_1$ so that $G_T^{m_1'}(\psi_\infty(z_1))=0$ or $G_T^{m_1}(\psi_\infty(z_1))=1$. But by construction of $\mathcal{U}_M$, we both have $G_T^{-m_1'}(\{0\})\subset\widetilde{V^0_\infty}$ and $G_T^{-m_1}(\{1\})\subset V^0_\infty$, for a contradiction. 
\end{proof}
The claim implies that $\psi_\infty(z_0)=\psi_\infty(\mathrm{P}^{q-1}(x))$ and since $\psi_k(\mathrm{P}^{q-1}(x))=f_k^q(-1)$ we also have that  $\psi_\infty(\mathrm{P}^{q-1}(x))=G_T(-1)$. It follows that 
\[\psi(h_1(\mathrm{P}^{q-1}(x))=\psi(h_1(z_0))=\psi_\infty(z_0)=G_T(-1)\in \widetilde{V^0_\infty}\setminus V^0_\infty.\]

In both cases ($x\in B^p$ or $x\in S^p\setminus B^p$) this shows that $\psi:=\psi_\infty\circ h_1^{-1}:\widetilde{U_1}\to \widetilde{V^0_\infty}$ is a dynamical marker map rel $\mathrm{P}_1$ of $G_T$, whence the rescaled limit $G_T$ (and its parameter $\sigma=1-T^2$) is dynamically marked rel $\mathrm{P}_1$ by $h_1(\mathrm{P}^{q-1}(x))$. By uniqueness of dynamical markings 
we then have convergence of the full sequence $f_k^q\to G_T$ with the required properties.
\end{proof}

\paragraph{Acknowledgements.} \quad The author would like to thank Carsten Lunde Petersen and Adam Epstein for many helpful discussions and suggestions, and for input to formulations of some of the key concepts. The author thanks Arnaud Ch\'eritat and Christian Henriksen for contributing their beautiful pictures. Furthermore, the author is very grateful to Harvard Mathematics Department and the Institute for Mathematical Sciences at Stony Brook University for their hospitality, and the Carlsberg Foundation for their generous support, via the grants 2010\_01\_0785 and 2011\_01\_0759, during the initial development of the concepts and ideas of this paper.

\end{document}